\documentclass[reqno,twoside,11pt,english]{amsart}

\usepackage[T1]{fontenc}
\usepackage{amsmath,amssymb,amsthm,mathtools}
\usepackage{mathrsfs}
\usepackage{graphicx,float}
\usepackage{booktabs,tabularx}
\usepackage{comment}
\usepackage{color}
\usepackage[colorlinks=true,linkcolor=blue,citecolor=blue,urlcolor=blue]{hyperref}

\allowdisplaybreaks

\numberwithin{equation}{section}

\newtheorem{theorem}{Theorem}[section]
\newtheorem{proposition}[theorem]{Proposition}
\newtheorem{lemma}[theorem]{Lemma}

\newtheorem*{remark}{Remark}

\newcommand{\R}{\mathbb{R}}
\newcommand{\C}{\mathbb{C}}

\newcommand{\eps}{\varepsilon}
\newcommand{\cX}{\mathcal{X}}
\newcommand{\cN}{\mathcal{N}}
\newcommand{\cZ}{\mathcal{Z}}

\begin{document}

\title[Global regularity and limit behavior of energy-critical CGL equations]
{Global regularity and general-coefficient singular limits for energy-critical complex Ginzburg--Landau equations}

\author{Lingbang Gao, Jie Xin, and Yunrui Zheng}

\address[L. Gao]{School of Mathematics, Shandong University,
Jinan 250100, Shandong, P. R. China}
\email{{\tt lingbang\_gao@mail.sdu.edu.cn}}

\address[J. Xin]{School of Information Engineering, Shandong Youth University
of Political Science, Jinan 250103, P. R. China}
\email{{\tt fdxinjie@sina.com}}

\address[Y. Zheng]{School of Mathematics, Shandong University,
Jinan 250100, Shandong, P. R. China}
\email{{\tt yunrui\_zheng@sdu.edu.cn}}

\begin{abstract}
We study energy-critical complex Ginzburg--Landau equations with a
linear damping term $Ru,\ R\geq 0$.  For the undamped aligned equation in
dimensions \(d=3,4\), we treat the focusing and defocusing cases in a
unified way and prove persistence of \(H^1\cap C_0\) regularity and
smoothness for positive times.  In particular, this resolves the
energy-critical cases of Cazenave's open problem.  In dimensions
\(3\le d\le6\), we develop a coefficient-uniform critical stability
framework for the zero-dispersion and inviscid limits.  It applies to
independent normalized complex coefficient paths in both the focusing and
defocusing cases.  From limiting data in the natural energy space \(H^1\),
we obtain convergence on every compact
subinterval of the maximal lifespan of the limiting solution.  Higher
regularity is required only for explicit linear
coefficient-error estimates, and the limits do not use global
well-posedness, scattering, or global spacetime bounds for the limiting
solution.  At the inviscid limit, we establish coefficient-uniform homogeneous and retarded Strichartz estimates; a key technical ingredient is the retarded double-endpoint estimate required by the critical forcing space.

\medskip
\noindent\textbf{Keywords}: Complex Ginzburg--Landau equations, global
regularity, zero-dispersion limit, inviscid limit.

\smallskip
\noindent\textbf{Mathematics Subject Classification (2020).} Primary 35Q56;
Secondary 35A01, 35B25, 35B65.
\end{abstract}

\maketitle

\section{Introduction}\label{sec:introduction}

The Ginzburg--Landau theory was introduced in the study of
superconductivity \cite{GinzburgLandau1950}.  The complex
Ginzburg--Landau equation is a model for
oscillatory states in dissipative systems and appears in the study of
pattern formation, phase transitions, and nonlinear waves
\cite{AransonKramer2002,GuoJiangLi2020}.  From the PDE point of view, it may be regarded
as a dissipative counterpart of the nonlinear Schr\"odinger equation: the
imaginary parts of the coefficients generate dispersive dynamics, while
their real parts add parabolic and gradient-flow components.  We study the
energy-critical equation
\begin{equation}
\label{eq:intro-cgl}
 \partial_tu-(a+i\alpha)\Delta u
 +\mu(b+i\beta)f(u)+Ru=0,
 \qquad (t,x)\in[0,\infty)\times\R^d,
\end{equation}
where
\[
 d\in\{3,4,5,6\},\qquad
 f(u)=|u|^pu,\qquad p=\frac4{d-2},
\]
\[
 a,b>0,\qquad \alpha,\beta\in\R,\qquad R\ge0,
 \qquad \mu\in\{-1,1\}.
\]
Here \(\mu=1\) is the defocusing case, \(\mu=-1\) is the focusing
case, and \(Ru\) is a linear damping term.  When \(R=0\), the equation
is invariant under the energy-critical scaling
\(
 u(t,x)\mapsto
 \lambda^{\frac{d-2}{2}}u(\lambda^2t,\lambda x),
\)
which preserves \(\dot H^1(\R^d)\).  The corresponding sharp Sobolev
inequality is due to Aubin and Talenti~\cite{Aubin1976,Talenti1976}.

Write
\(
 A=a+i\alpha,\ C=b+i\beta.
\)
The phase of \(A\) determines the balance between diffusion and
dispersion, while \(C\) gives an independent phase to the nonlinearity.
If these phases are not aligned, the usual energy calculation contains a
mixed term without a fixed sign.  This is a basic difficulty created by
general complex coefficients.

At the purely dispersive endpoint \(A=C=-i\) and \(R=0\),  \eqref{eq:intro-cgl} is reduced to the energy-critical NLS
\[
 i\partial_tu-\Delta u+\mu f(u)=0.
\]
The NLS theory is therefore the natural reference point as
the dissipative part becomes sufficiently small.
The local and global theory of energy-critical NLS and related dispersive
problems has been developed in
\cite{CazenaveWeissler1990,ChengGaoZheng2016,KenigMerle2006,Dodson2019,
KenigMerle2008Wave,KillipMiaoVisanZhangZheng2017,MiaoMurphyZheng2014,
Struwe2006}, while well-posedness and qualitative properties of CGL flows have
been studied in \cite{GinibreVelo1996,GinibreVelo1997,
OkazawaYokota2002Monotonicity,OkazawaYokota2002JDE,
ClementOkazawaSobajimaYokota2012,CorreiaFigueira2018,
ChengGuoZhengZheng2024Exterior,Henry1981,Pazy1983,Lunardi1995}.  Related
focusing results include standing waves on several types of domains
\cite{CipolattiDicksteinPuel2014} and critical or flat blow-up profiles for
equations with independent complex coefficients
\cite{NouailiZaag2017,DuongNouailiZaag2019,DuongNouailiZaag2023}.
In focusing energy-critical problems, the ground state provides the
variational threshold; see, for example, Kenig and
Merle~\cite{KenigMerle2006}.  For the general focusing CGL equation, a nonzero
finite-energy stationary state can exist only for aligned coefficients; see
Appendix~\ref{app:stationary-alignment}.  We therefore
consider the aligned family
\begin{equation}
 \partial _t u-z\Delta u+\mu zf(u)=0,
 \qquad |z|=1,\qquad \operatorname{Re}z>0,
 \qquad \mu\in\{+1,-1\}.
 \label{eq:intro-aligned-cgl}
\end{equation}
In dimensions
\(d=3,4\), global strong \(\dot H^1\) solutions are available for the
defocusing equation from Huang and Wang~\cite{HuangWang2008} (see also
\cite{ChengGuoZheng2025Limit}) and, below the ground-state threshold, for the
focusing equation from Cheng, Guo, and Zheng~\cite{ChengGuoZheng2026JMPA}.  Combining
the defocusing theory with our regularity argument answers Cazenave's
\(C_0\cap H^1\) global existence question
\cite[Open Problem~4.11]{Cazenave2012ENAMA} in dimensions \(d=3,4\); see
Section~\ref{sec:aligned-regularity}.

Strong inviscid convergence from \(H^1\) data to the corresponding NLS
solution in \(L_t^\infty H_x^1\) is already known in the energy-subcritical
setting from Machihara and Nakamura \cite{MachiharaNakamura2003}.  Related
results include the \(H^2\)-data theory of Wu \cite{Wu1998} and the
quantitative estimates of Wang \cite{Wang2002}.  At the defocusing
\(H^1\)-energy-critical exponent, Bechouche and J\"ungel
\cite{BechoucheJungel2000} allowed \(H^1\) initial data and obtained
weak-\(*\) convergence in \(L_t^\infty H_x^1\), together with strong
convergence in subcritical local Lebesgue spaces.  Huang and Wang
\cite{HuangWang2008} subsequently established strong \(H^1\)-level
convergence at the energy-critical NLS endpoint in dimensions
\(3\le d\le6\). Their strong \(H^1\)-level theorem assumes \(H^1\) initial data for CGL and \(H^3\) initial data for the limiting NLS; with only \(H^1\) limiting NLS data, the conclusion is at the \(L^2\) level.  Their perturbative argument uses \(H^3\) control of the
NLS solution.  Thus the energy-critical inviscid limit was known
in weak \(H^1\) topology, and in strong \(H^1\) topology under additional
regularity, but strong convergence at the natural \(H^1\) regularity
remained beyond these results.  More recently, Cheng, Guo, and Zheng
\cite{ChengGuoZheng2025Limit} studied the zero-dispersion and inviscid
limits for an aligned energy-critical family.

The critical parabolic theory relevant to the zero-dispersion endpoint goes back to
Weissler and Giga~\cite{Weissler1980,Giga1986}; related fractional-power
dissipative equations have been studied by Miao, Yuan, and Zhang
\cite{MiaoYuanZhang2008}.  See also
\cite{Henry1981,Pazy1983,Lunardi1995}.  The corresponding dispersive
background includes Strichartz, retarded, and endpoint estimates that
have been developed in
\cite{Strichartz1977,GinibreVelo1992,KeelTao1998,
ChristKiselev2001}; see also
\cite{Tao2006,MiaoZhangZheng2025}.  For
energy-critical NLS, we refer to the standard critical local and persistence
theory \cite{CazenaveWeissler1990,Tao2006}.

In the present work, the singular limits start from limiting
data in the natural energy space \(H^1\) and give strong convergence in the
full critical solution space \(\cX^1\subset L^{\infty}_t H^1_x\), which is defined in
Section~\ref{sec:prelim}.  The same coefficient-uniform stability
framework treats the zero-dispersion and inviscid endpoints in dimensions
\(3\le d\le6\), allows independent normalized complex diffusion and
nonlinear coefficient paths, covers the focusing and defocusing cases, and
is formulated on compact subintervals of the maximal lifespan of the
limiting solution; see Sections~\ref{sec:heat} and~\ref{sec:schrodinger}.
The argument requires neither global well-posedness of the limiting equation
nor global spacetime bounds for the limiting solution, while higher Sobolev
regularity is used only for the quantitative coefficient-rate estimates.  To the best
of our knowledge, no previous singular-limit result for energy-critical CGL
simultaneously combines these features.

\subsection{Main results}

We state two main theorems.  The first concerns the regularity of aligned
defocusing and focusing solutions.  The second
combines the zero-dispersion and inviscid limits for general coefficients.
Both parts of the limit theorem hold for \(3\le d\le6\), cover the
defocusing and focusing cases, and are stated on compact subintervals of the
maximal lifespan of the limiting solution.

Cazenave~\cite[Open Problem~4.11]{Cazenave2012ENAMA} asked whether the
maximal \(C_0\cap H^1\) solution of the defocusing CGL equation is global.
Theorem~\ref{thm:intro-A} gives an affirmative answer in the
energy-critical cases \(d=3,4\).  The same regularity argument also applies
to the focusing threshold solution of Cheng, Guo, and
Zheng~\cite{ChengGuoZheng2026JMPA}.

\begin{theorem}[Regularity of aligned energy-critical CGL flows]
\label{thm:intro-A}
Let \(d\in\{3,4\}\), and let \(z\in\C\) satisfy
\(
 |z|=1,
 \
 \operatorname{Re}z>0.
\)
Assume one of the following:
\begin{enumerate}
\item[(a)] \(\mu=+1\) and \(u_0\in H^1(\R^d)\);
\item[(b)] \(\mu=-1\), \(u_0\in H^1(\R^d)\), and
\(
 E_-(u_0)<E_-(W),~
 \|\nabla u_0\|_2<\|\nabla W\|_2,
\)
where
\[
 E_-(v):=\frac12\|\nabla v\|_2^2
 -\frac1{2^*}\|v\|_{2^*}^{2^*},
 \qquad
 W(x):=\left(1+\frac{|x|^2}{d(d-2)}\right)^{-(d-2)/2}.
\]
\end{enumerate}
Then the corresponding global strong \(\dot H^1\) solution of
\eqref{eq:intro-aligned-cgl} satisfies
\[
 u\in C([0,\infty);H^1(\R^d))
 \cap C((0,\infty);C^\infty(\R^d)\cap C_0(\R^d)).
\]
If, in addition, \(u_0\in C_0(\R^d)\), then
\[
 u\in C\bigl([0,\infty);H^1(\R^d)\cap C_0(\R^d)\bigr)
 \cap C((0,\infty);C^\infty(\R^d)\cap C_0(\R^d)).
\]
\end{theorem}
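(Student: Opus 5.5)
Our plan is to start from the global strong $\dot H^1$ solution given by Huang--Wang in case (a) and by Cheng--Guo--Zheng in case (b). We then upgrade its regularity by parabolic smoothing. The key input from those works is that $u\in C([0,\infty);\dot H^1)$ with $\nabla u\in L^2_{t,\mathrm{loc}}\dot H^1$ and, on every compact time interval $[0,T]$, a finite critical spacetime norm such as $\|u\|_{L^{2(d+2)/(d-2)}_{t,x}([0,T])}$. Both come from the energy identity of the aligned equation: multiplying by $\bar z\,\overline{(-\Delta u+\mu f(u))}$ and taking real parts gives $\frac{d}{dt}E_\mu(u)=-\operatorname{Re}z\,\|{-\Delta u}+\mu f(u)\|_2^2$. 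In the focusing case we combine this with the sub-threshold variational trapping, so that $\|\nabla u(t)\|_2<\|\nabla W\|_2$ for all $t$. For the $L^2$ part of $H^1$ we use $\frac{d}{dt}\|u\|_2^2=-2\operatorname{Re}z\|\nabla u\|_2^2-2\mu\operatorname{Re}z\|u\|_{2^*}^{2^*}$. The right-hand side is nonpositive when $\mu=+1$; when $\mu=-1$ it is bounded by $C\|\nabla u\|_2^{2^*}$, which is locally integrable. Continuity in $L^2$ then follows from the Duhamel formula, so $u\in C([0,\infty);H^1)$.

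Next we prove smoothness for $t>0$. We write $u(t)=e^{tz\Delta}u(t_0)-\mu z\int_{t_0}^t e^{(t-s)z\Delta}f(u(s))\,ds$. Because $\operatorname{Re}z>0$, the kernel of $e^{tz\Delta}$ is a complex Gaussian bounded by a real Gaussian up to a factor $|z|^{d/2}(\operatorname{Re}z)^{-d/2}$. Hence the usual heat smoothing estimates hold: $\|e^{tz\Delta}\|_{L^q\to L^r}\lesssim t^{-\frac d2(\frac1q-\frac1r)}$, with an extra $t^{-1/2}$ per derivative. We then run a bootstrap in the style of Weissler and Giga. The critical bound $u\in L^{2(d+2)/(d-2)}_{t,x}$ has small norm on short intervals. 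A Giga-type iteration in the spaces $t^{\gamma}L^\infty_tL^r_x$, starting from $\dot H^1\subset L^{2^*}$ and raising $r$ step by step, gives $u\in L^\infty_{loc}((0,T];L^\infty)$. Once $u$ is bounded, $f(u)=|u|^pu$ is locally Lipschitz in $L^\infty$. Differentiating the Duhamel formula repeatedly then gives $C^k$ bounds for every $k$, with one caveat. For $d=3$ we have $p=4$, so $f$ is smooth and this is immediate. For $d=4$ we have $p=2$ and $f(u)=|u|^2u$ is a polynomial in $u,\bar u$, also smooth. This is exactly why the statement restricts to $d=3,4$ rather than using the non-smooth nonlinearity of $d=5,6$. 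The result is $u\in C((0,\infty);C^\infty)$ with all derivatives bounded. Decay at infinity, giving $C_0$, follows because $u(t)\in H^1\cap W^{k,\infty}$ for all $k$, hence $u(t)\in H^k$ for all $k$, and $H^k\subset C_0$ when $k>d/2$.

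For the second assertion, assume $u_0\in C_0$. The free part $e^{tz\Delta}u_0$ is continuous into $C_0$ at $t=0$, since the complex Gaussian is an approximate identity. For the Duhamel part we need $\|u\|_{L^\infty([0,\delta]\times\R^d)}$ finite. We use a short-time contraction in $L^\infty_t(C_0)\cap L^\infty_tH^1$: this solves the equation with a local existence time that depends on $\|u_0\|_\infty$. By uniqueness of strong $\dot H^1$ solutions, this local solution coincides with the global one. On $[0,\delta]$ the Duhamel term is then bounded by $\delta\,\|u\|_\infty^{p+1}\to0$, which gives continuity into $C_0$ at $t=0$. On $[\delta,\infty)$ continuity into $C_0$ comes from the previous step.

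The main obstacle is the $L^\infty$ bootstrap at critical regularity. Because the problem is energy-critical, the $\dot H^1$ bound alone is not enough to gain integrability, so the iteration has to exploit the smallness of the critical spacetime norm on short intervals. These local-in-time intervals must be chosen using the uniform-in-time $\dot H^1$ bound together with absolute continuity of the integral, rather than any scattering information. We will try the Giga $L^q$–$L^r$ iteration first, with smallness supplied by $\|u\|_{L^{2(d+2)/(d-2)}_{t,x}(I)}$.
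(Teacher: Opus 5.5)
Your $L^2$ step and your treatment of $C_0$-continuity at $t=0$ essentially match the paper. For the $L^2$ step you use the Duhamel formula with the $h^{-1/2}$ gain of $e^{hz\Delta}$ from $L^{(2^*)'}\subset H^{-1}$ to $L^2$. At $t=0$ you use a contraction in $H^1\cap C_0$ followed by $\dot H^1$ uniqueness.

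For $t>0$ you take a different route. The paper uses compactness of $\{u(t)\}$ in $H^1$ to get uniform smallness of $\|\nabla u\|_{L^2(B_\rho)}+\|u\|_{L^{2^*}(B_\rho)}$, rescales parabolically, and applies the $\varepsilon$-regularity criterion of Lemma~\ref{lem:aligned-local-regularity}. You replace this with a Giga--Weissler critical $L^p$ bootstrap. That is a legitimate alternative, but the scheme you describe does not close.

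Work in $t^{\gamma}L^\infty_tL^r_x$ with smallness taken from $L^{p_c}_{t,x}$. H\"older gives $\||u|^pu\|_{L^b}\le\|u\|_{p_c}^p\|u\|_r$ with $\frac1b=\frac2{d+2}+\frac1r$, and the semigroup costs $(t-s)^{-d/(d+2)}$. Pairing this with $\|u(s)\|_{p_c}^p\in L^{(d+2)/2}_s$ requires $(t-s)^{-d/(d+2)}\in L^{(d+2)/d}_s$, that is, $(t-s)^{-1}\in L^1_s$ near $s=t$, which fails. If you put $|u|^p$ in $L^\infty_tL^{2^*}_x$ instead, you get $(t-s)^{-1}$ directly. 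More generally, with only scale-invariant norms, H\"older in time gives pointwise-in-time $L^r$ bounds only for $r<2^*$. Any such bootstrap is also a priori: it needs the bootstrapped norm to be finite, and you never secure that.

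The missing idea is to leave the critical setting.
\begin{itemize}
\item Since $p_c>2^*$, $u(t_0)\in L^{p_c}$ for a.e.\ $t_0$. This is subcritical data, with local existence time $\gtrsim\|u(t_0)\|_{p_c}^{-p_c}$.
\item Take a short interval $I$ with $\|u\|_{L^{p_c}_{t,x}(I)}$ small. Chebyshev gives such a $t_0$ in the first half of $I$ whose local solution lives past the end of $I$ and is bounded on the second half of $I$.
\item Uniqueness of mild solutions in $L^{p_c}_{t,x}$ identifies that solution with $u$. This uniqueness follows from the parabolic Hardy--Littlewood--Sobolev bound $L^2_{t,x}\to L^{p_c}_{t,x}$ together with $\||u|^pw\|_{L^2_{t,x}}\le\|u\|_{L^{p_c}_{t,x}}^p\|w\|_{L^{p_c}_{t,x}}$.
\end{itemize}

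Second, the inference ``$u(t)\in H^1\cap W^{k,\infty}$ for all $k$, hence $u(t)\in H^k$'' is false, because bounded derivatives give no $L^2$ control of high frequencies. For example, take $\widehat\phi\in C_c^\infty(B_1)$ and $L_n=e^{2^n}$. Then $\sum_{n\ge1}2^{-3n/2}L_n^{-d/2}\phi(x/L_n)e^{i2^nx_1}$ lies in $H^1$ and in every $W^{k,\infty}$, but not in $H^2$.

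You also assert only bounds, not continuity of $t\mapsto u(t)$ in $C^k\cap C_0$. Both points are repaired as in Proposition~\ref{prop:aligned-positive-time-regularity}:
\begin{itemize}
\item Restart the Duhamel formula at a positive time.
\item Bootstrap $H^\ell\to H^{\ell+1}$ using $\|e^{hz\Delta}\|_{H^\ell\to H^{\ell+1}}\lesssim h^{-1/2}$ and $\|f(u)\|_{H^\ell}\lesssim_{J,u}\|u\|_{H^\ell}$. In the second estimate one factor goes in $L^2$ and the rest in $L^\infty$. This is exactly where your $W^{k,\infty}$ bounds and the polynomial form of $f$ for $d=3,4$ are used.
\end{itemize}
This gives $u\in C([\tau,T];H^m)$ for every $m$, and hence continuity into $C^k\cap C_0$.
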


\begin{remark}
  In case \emph{(a)}, if \(u_0\in H^1\cap C_0\), the maximal
\(H^1\cap C_0\) solution is therefore global.  This answers
  \cite[Open Problem~4.11]{Cazenave2012ENAMA} in the energy-critical cases
  \(d=3,4\).  In the defocusing setting, we expect that the regularity
argument can also be adapted to the general-coefficient CGL region considered
by Kuroda, \^Otani, and Shimizu~\cite{KurodaOtaniShimizu2017}, as an
appropriate monotone energy is available. For the focusing problem, stationary states provide the ground state behind
the variational threshold used in the global theory.  As shown in
Appendix~\ref{app:stationary-alignment}, a nonzero finite-energy stationary
state can exist only when the diffusion and nonlinear coefficients are
aligned.  We therefore restrict the focusing regularity result to the aligned
family.
\end{remark}

We now turn to the singular-limit problem.  We first normalize the
coefficients.  If the original coefficients
are \(\widetilde A_\eps,\widetilde C_\eps\neq0\), set
\(
 \lambda_\eps=|\widetilde A_\eps|,
 ~
 \rho_\eps=
 (\lambda_\eps / |\widetilde C_\eps|)^{1/p},
 ~
 u^\eps(t,x)=
 \rho_\eps^{-1}\widetilde u^\eps(t / \lambda_\eps,x).
\)
The normalized equation is
\begin{equation}\label{eq:intro-normalized-family}
 \partial_tu^\eps-A_\eps\Delta u^\eps
 +\mu C_\eps f(u^\eps)+r_\eps u^\eps=0,
 \qquad r_\eps\ge0,
\end{equation}
where
\(
 A_\eps=\widetilde A_\eps / |\widetilde A_\eps|,
 ~
 C_\eps=\widetilde C_\eps / |\widetilde C_\eps|,
 ~
 r_\eps=\widetilde R_\eps / |\widetilde A_\eps|.
\)
The zero-dispersion limit corresponds to \(A_\eps,C_\eps\to1\), whereas the
inviscid limit corresponds to \(A_\eps,C_\eps\to-i\); in both cases
\(r_\eps\to0\).

For \(\mu\in\{-1,1\}\), let \(u\) and \(v\) denote the maximal \(H^1\)
mild solutions of
\begin{equation}
 \partial_tu-\Delta u+\mu f(u)=0,
 \qquad u(0)=u_0,
 \label{eq:intro-heat}
\end{equation}
and
\begin{equation}
 i\partial_tv-\Delta v+\mu f(v)=0,
 \qquad v(0)=v_0,
 \label{eq:intro-nls}
\end{equation}
with maximal lifespans \(T_{\max}(u_0)\) and \(T_{\max}(v_0)\),
respectively.  Here and below, a maximal \(H^1\) mild solution is understood
in the standard critical solution space: on every compact subinterval
\(I\) of its maximal lifespan, it belongs to
\(
 C(I;H^1(\R^d))\cap\cX^1(I).
\)
The space \(\cX^1(I)\) is defined in Section~\ref{sec:prelim}.

\begin{theorem}[Singular limits]
\label{thm:intro-B}
Let \(d\in\{3,4,5,6\}\), and fix \(\mu\in\{-1,1\}\).

\smallskip
\noindent\textup{(a) \textbf{Zero-dispersion limit.}}
Let \(u_0\in H^1(\R^d)\), and let \(u\) be the maximal \(H^1\) mild
solution of \eqref{eq:intro-heat}.  Assume that
\(
 A_\eps\to1,~
 C_\eps\to1,~
 r_\eps\to0^+,~
 \|u_0^\eps-u_0\|_{H^1}\to0.
\)
Then, for every \(0<T<T_{\max}(u_0)\), equation
\eqref{eq:intro-normalized-family} with initial data
\(u^\eps(0)=u_0^\eps\) admits, for all sufficiently small \(\eps\), a
unique solution
\[
 u^\eps\in C([0,T];H^1(\R^d))\cap\cX^1([0,T]),
\]
and
\begin{equation}\label{eq:intro-heat-limit-H1}
 u^\eps\longrightarrow u
 \qquad\text{in }\cX^1([0,T])
 \quad\text{as }\eps\to0.
\end{equation}

If, in addition, \(u_0\in H^2(\R^d)\), then
\(
 u\in C([0,T];H^2(\R^d))\cap L_t^2H_x^3([0,T]\times\R^d),
\)
and there exists \(K_{u,T}>0\), independent of \(\eps\), such that
\begin{equation}\label{eq:intro-heat-limit}
 \|u^\eps-u\|_{\cX^1([0,T])}
 \le K_{u,T}\bigl(
 \|u_0^\eps-u_0\|_{H^1}
 +|A_\eps-1|
 +|C_\eps-1|
 +r_\eps
 \bigr)
\end{equation}
for all sufficiently small \(\eps\).

\medskip
\noindent\textup{(b) \textbf{Inviscid limit.}}
Let \(v_0\in H^1(\R^d)\), and let \(v\) be the maximal \(H^1\) mild
 solution of \eqref{eq:intro-nls}.  Assume that
 \(
 A_\eps\to-i,~
 C_\eps\to-i,~
 r_\eps\to0^+,~
 \|v_0^\eps-v_0\|_{H^1}\to0.
\)
Then, for every \(0<T<T_{\max}(v_0)\), equation
\eqref{eq:intro-normalized-family}, with initial data \(v^\eps(0)=v_0^\eps\), admits, for all
sufficiently small \(\eps\), a unique solution
\[
 v^\eps\in C([0,T];H^1(\R^d))\cap\cX^1([0,T]),
\]
and
\begin{equation}\label{eq:intro-schrodinger-limit-H1}
 v^\eps\longrightarrow v
 \qquad\text{in }\cX^1([0,T])
 \quad\text{as }\eps\to0.
\end{equation}

If, in addition, \(v_0\in H^3(\R^d)\), then
\(
 v\in C([0,T];H^3(\R^d)),
\)
and there exists \(K_{v,T}>0\), independent of \(\eps\), such that
\begin{equation}\label{eq:intro-schrodinger-limit}
 \|v^\eps-v\|_{\cX^1([0,T])}
 \le K_{v,T}\bigl(
 \|v_0^\eps-v_0\|_{H^1}
 +|A_\eps+i|
 +|C_\eps+i|
 +r_\eps
 \bigr)
\end{equation}
for all sufficiently small \(\eps\).
\end{theorem}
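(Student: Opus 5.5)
The proof goes through a coefficient-uniform long-time perturbation (stability) lemma for \eqref{eq:intro-normalized-family} in $\cX^1$. The limiting solution is treated as an approximate solution of the $\eps$-equation and compared with the true solution $u^\eps$.

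\textbf{Step 1: uniform linear estimates.} For $A$ in a small neighbourhood of the endpoint ($A$ near $1$, or $A$ near $-i$ with $\operatorname{Re}A\ge0$), we prove homogeneous and retarded estimates for $e^{tA\Delta}$ in the Strichartz-type norms defining $\cX^1$ and its dual forcing space $\cN^1$, with constants independent of $A$ and of $r\ge0$. The damping factor $e^{-rt}$ is harmless.
\begin{itemize}
\item Near $A=1$ the required $L^q_tL^r_x$ smoothing bounds are maximal-regularity and heat-kernel estimates, uniform because $\operatorname{Re}A\ge c>0$.
\item Near $A=-i$ we factor $e^{tA\Delta}=e^{t\operatorname{Re}A\,\Delta}e^{it\operatorname{Im}A\,\Delta}$. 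The heat factor is a contraction on every $L^r$ and commutes with the Schr\"odinger group. This gives the dispersive bound $\|e^{tA\Delta}\|_{L^1\to L^\infty}\lesssim |t\operatorname{Im}A|^{-d/2}$ uniformly, so Keel--Tao yields homogeneous and non-endpoint retarded estimates.
\end{itemize}
The retarded double-endpoint estimate ($L^2_tL^{2d/(d-2)}_x$ both in and out) is the main obstacle. Christ--Kiselev does not apply at the double endpoint, and the semigroup is not unitary. I would rerun Keel--Tao's bilinear dyadic decomposition for the kernel $e^{(t-s)A\Delta}$ with $t>s$, using only the uniform $L^2$ contraction and the uniform dispersive decay above. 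The truncation $t>s$ is exactly what the Keel--Tao argument handles natively, since their sum already runs over $s<t$. Nonlinear estimates $\|f(u)-f(w)\|_{\cN^1}\lesssim (\|u\|_{\cX^1}^p+\|w\|_{\cX^1}^p)\|u-w\|_{\cX^1}$ are then standard.

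\textbf{Step 2: stability lemma.} Suppose $w$ solves $\partial_tw-A\Delta w+\mu Cf(w)+rw=e$ on $I=[0,T]$ with
\[
\|w\|_{\cX^1(I)}\le L,\qquad \|u^\eps(0)-w(0)\|_{H^1}+\|e\|_{\cN^1(I)}\le\delta\le\delta_0(L).
\]
Then the solution $u^\eps$ exists on $I$ and satisfies $\|u^\eps-w\|_{\cX^1}\le C(L)\delta$, with constants independent of $A,C,r$ near the endpoint. The proof is the usual one. We split $I$ into $N(L,\eta)$ subintervals on which a scale-invariant part of $\|w\|_{\cX^1}$ (e.g.\ $\|\nabla w\|_{L^2_tL^{2d/(d-2)}_x}$-type norms) is below $\eta$. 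On each subinterval we run a bootstrap via Step 1 and iterate. Uniqueness comes from the same contraction.

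\textbf{Step 3: the limits.} Take $w=u$, the heat solution (respectively $w=v$, the NLS solution). It satisfies the perturbed equation with error
\[
e=-(A_\eps-1)\Delta u+\mu(C_\eps-1)f(u)+r_\eps u
\]
(analogously with $-i$ in the inviscid case). With only $H^1$ data, $\Delta u$ is not in $\cN^1$, so we use a density argument.
\begin{itemize}
\item Approximate $u_0$ in $H^1$ by $u_{0,n}\in H^3$. By the critical local theory the corresponding solutions $u_n$ exist on $[0,T]$ with $u_n\to u$ in $\cX^1$.
\item By persistence of regularity, $u_n$ has bounded higher norms, e.g.\ $\Delta u_n\in L^1_tH^1_x$ or $L^2_tH^1_x$ on $[0,T]$.
\item Then $\|e_n\|_{\cN^1}\le K_n(|A_\eps-1|+|C_\eps-1|+r_\eps)$.
\item Apply Step 2 with $w=u_n$, $\delta=\|u_0^\eps-u_{0,n}\|_{H^1}+K_n(\cdots)$. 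Let $\eps\to0$, then $n\to\infty$, to get \eqref{eq:intro-heat-limit-H1} and \eqref{eq:intro-schrodinger-limit-H1}.
\end{itemize}
Under the extra $H^2$ (heat) or $H^3$ (NLS) assumption, take $w=u$ directly. Parabolic regularity gives $u\in L^2_tH^3_x$, which controls $\Delta u$ in the energy-level forcing norm. For NLS, $v\in C_tH^3_x$ puts $\Delta v\in L^1_tH^1_x$. This yields the linear rates \eqref{eq:intro-heat-limit} and \eqref{eq:intro-schrodinger-limit}.
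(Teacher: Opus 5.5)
Your outline is correct. In the decisive step, removing the derivative loss in the qualitative $H^1$ limits, it takes a different route from the paper.

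\emph{How the two routes differ.} The paper never regularizes the data. It inserts the frequency truncation $P_{\le N}u$ (resp.\ $P_{\le N}v$) of the limiting solution itself into the $\eps$-equation. The diffusion error is then $\lesssim TN^2|A_\eps-1|\,\|u\|_{L^\infty_tH^1_x}$. The only new term is the commutator $P_{\le N}f(u)-f(P_{\le N}u)$, which tends to zero in $L^2_tW^{1,(2^*)'}_x$ using only the critical Lipschitz bound and $P_{\le N}u\to u$ in $\cX^1$. $N$ is fixed before $\eps$.

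You instead take $u_{0,n}\in H^3$ and use continuous dependence for the limiting flow (your Step~2 with $e=0$). This gives $u_n\to u$ in $\cX^1([0,T])$ with $\sup_n\|u_n\|_{\cX^1}<\infty$, so the stability constants do not depend on $n$. You then apply the linear-rate residual bound to the smooth $u_n$, and uniqueness in $C_tH^1_x\cap\cX^1$ identifies the solutions obtained for different $n$. This closes, and it has the merit of deriving the qualitative limit directly from the quantitative estimate.

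\emph{What your route costs.} The $H^1$ theorem now depends on persistence of higher regularity on all of $[0,T]$:
\begin{itemize}
\item for the heat flow, $C_tH^2_x\cap L^2_tH^3_x$;
\item for NLS, $C_tH^3_x$, which is the more serious requirement.
\end{itemize}
For NLS with $d=5,6$, the nonlinearity $|u|^pu$ ($p=4/3$ or $1$) is too rough for the naive $H^3$ argument. One needs the Huang--Wang time-differentiation argument, which is the same input behind the $H^3$ claim in part~(b) that you take for granted. The paper confines this input to the quantitative refinements, so its qualitative statement is independent of it.

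\emph{On the linear side.} Your factorization $e^{tA\Delta}=e^{ta\Delta}e^{-itb\Delta}$ gives the homogeneous estimates more cheaply than the paper's complex-Gaussian $TT^*$ bound, because the heat factor is an $L^r$ contraction for each fixed $t$. Two statements need correcting, however.
\begin{itemize}
\item Keel--Tao's abstract retarded estimate concerns $\int_{s<t}U(t)U(s)^*F(s)\,ds$. Since $S_A(t)S_A(s)^*=e^{(a(t+s)-ib(t-s))\Delta}\neq S_A(t-s)$, it does not by itself give retarded bounds for the Duhamel operator.
\item Rerunning the bilinear argument for the kernel $S_A(t-s)$, $s<t$, uses more than the $L^2$ contraction and the dispersive decay. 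In the energy sub-cases you must factor through $L^2$ at a time $c\in[s,t]$ and invoke the homogeneous estimates for both $S_A$ and $S_{\overline A}$.
\end{itemize}
That factorization is exactly what the paper's Whitney squares with an intermediate time $c$ provide. Your dyadic $t-s\sim2^j$ version works equally well, taking $c=t$ or $c=s$.
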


The regularity of the limiting data is central to the comparison with the
classical energy-critical inviscid theory.  Huang and Wang
\cite{HuangWang2008} obtained strong \(H^1\)-level convergence in dimensions
\(3\le d\le6\) under an \(H^3\) assumption on the limiting NLS data,
whereas the qualitative part of Theorem~\ref{thm:intro-B} starts from the
natural \(H^1\) energy space and yields convergence in the full critical
class \(\cX^1\).  The additional \(H^2\) assumption in part~\emph{(a)} and
\(H^3\) assumption in part~\emph{(b)} are used only for the explicit
coefficient-dependent rates.

To compare our results more clearly with previous work, we summarize the
main differences in the following table.

\begin{table}[H]
\centering
\caption{Comparison with representative singular-limit results for CGL.}
\label{tab:cgl-limit-comparison}
\footnotesize
\setlength{\tabcolsep}{4pt}
\renewcommand{\arraystretch}{1.13}
\resizebox{\textwidth}{!}{
\begin{tabular}{@{}lcccccc@{}}
\toprule
Reference & Focusing/Defocusing & Data & Convergence & Limit & Energy-critical
  & Coefficients \\
\midrule
JDE (1998)~\cite{Wu1998}
  & Both & $H^2$ & $L^{\infty}H^1$ & Inviscid & No & Conditional \\
CMP (2000)~\cite{BechoucheJungel2000}
  & Defocusing & $H^1$ & $L^{\infty}H^1$ weak-$*$ & Inviscid & Yes & Conditional \\
CPAM (2002)~\cite{Wang2002}
  & Defocusing & $H^1$ & $L^{\infty}L^2$ & Inviscid & No & Conditional \\
JMAA (2003)~\cite{MachiharaNakamura2003}
  & Defocusing & $H^1$ &  $L^{\infty}H^1$ & Inviscid & No
  & Conditional \\
JFA (2008)~\cite{HuangWang2008}
  & Defocusing & $H^1_{\rm CGL}/H^3_{\rm NLS}$ &  $L^{\infty}H^1$
  & Inviscid & Yes & Conditional \\
Acta Math. Sin. (2025)~\cite{ChengGuoZheng2025Limit}
  & Both & $H^2/H^3$ & $L^{\infty}H^1$ & Both & Yes & Aligned \\
\textbf{Present work}
  & \textbf{Both} & $\boldsymbol{H^1}$ & $\boldsymbol{\cX^1} (\subset L^{\infty}H^1) $
  & \textbf{Both} & \textbf{Yes} & \textbf{Independent} \\
\bottomrule
\end{tabular}
}
\vspace{2pt}

\parbox{\textwidth}{\scriptsize Here ``both'' in the limit
column means the zero-dispersion and inviscid limits.  }
\end{table}

\subsection{Proof strategy}

We briefly describe the main ideas in the proofs.  The two parts of the
paper use different mechanisms: the regularity result starts from the
existing global \(\dot H^1\) theory, recovers the missing \(L^2\) component,
and then uses parabolic smoothing, whereas the singular-limit result is based
on coefficient-uniform critical stability.  The critical solution and
forcing spaces are introduced in Section~\ref{sec:prelim}.

\medskip
\noindent\emph{Regularity of aligned CGL equations.}
The starting point is the existing global strong \(\dot H^1\) solution for
the aligned equation, supplied in the defocusing case by Huang and
Wang~\cite{HuangWang2008} and in the focusing case, below the ground-state
threshold, by Cheng, Guo, and Zheng~\cite{ChengGuoZheng2026JMPA}.  Since the
homogeneous theory does not control the \(L^2\) component, we combine smooth
approximation with an \(H^{-1}\to L^2\) estimate for the complex heat
semigroup.  This recovers the missing \(L^2\) component and yields
\(u\in C_tH_x^1\).

On compact positive-time intervals, uniform local critical control allows
us to apply the local regularity criterion, under the required smallness
condition, after parabolic rescaling.
Iterating the resulting smoothing estimates gives
\(u(t)\in C^\infty\) for every \(t>0\).  Finally, when
\(u_0\in H^1\cap C_0\), we construct the local solution in this space and
identify it with the global strong solution by \(\dot H^1\) uniqueness.  This
proves persistence of the \(C_0\) regularity, including continuity at the
initial time; see
Section~\ref{sec:aligned-regularity}.

\medskip
\noindent\emph{Singular limits.}
The two endpoint arguments use a common \(\cX^1\)-\(\cN^1\) stability
framework.  The proof is carried out on compact subintervals of the maximal
lifespan of the limiting solution and faces two principal difficulties.

\smallskip
\noindent\emph{Critical stability on compact time intervals.}
The comparison cannot be based on global spacetime control, and independent
coefficients do not in general provide a common monotone energy or Lyapunov
functional along the approximating family.  We instead subdivide each
compact time interval into finitely many pieces on which the critical norm
is small, and prove stability estimates whose constants are uniform in the
coefficients.  The range \(3\le d\le6\) ensures
\(p=4/(d-2)\ge1\), so \(Df\) is locally Lipschitz and the same critical
nonlinear difference estimate applies at the zero-dispersion and inviscid
limits.  Thus the qualitative argument uses neither global
well-posedness nor global spacetime bounds for CGL, NLS, or the nonlinear
heat equation.

\smallskip
\noindent\emph{\(H^1\) regularity.}
Directly substituting the limiting solution \(U\) into the nearby CGL
equation produces a coefficient error of the form
\(
 (A_\eps-A_0)\Delta U,
\)
which is not an admissible \(\cN^1\) forcing for a general \(H^1\) solution.
We instead use
\(
 U_N=P_{\le N}U
\)
as the approximate solution, where \(P_{\le N}\) denotes the smooth
Littlewood--Paley projection onto frequencies \(|\xi|\lesssim N\); see
Section~\ref{sec:prelim} for the precise definition.  Bernstein's inequality gives
\(
 \|\Delta U_N\|_{H^1}\lesssim N^2\|U\|_{H^1},
\)
while \(U_N\to U\) strongly in the full \(\cX^1\) space.  We also prove
uniform \(\cX^1\) bounds and convergence of the nonlinear commutator
\(
 P_{\le N}f(U)-f(P_{\le N}U).
\)
For fixed \(N\), the derivatives in the coefficient error fall on the
cutoff, and the residual tends to zero as the coefficients converge.  The
uniform \(\cX^1\) bound keeps the subdivision and the stability threshold
independent of \(N\).  We therefore first choose \(N\) large enough to
control the truncation error.  With \(N\) fixed, we then choose \(\eps\)
small enough to control the coefficient error.  This removes the apparent
derivative loss in both qualitative \(H^1\) limits.

At the zero-dispersion limit, \(\operatorname{Re}A_\eps\) remains
uniformly positive.  A linear energy estimate and the critical Lipschitz
bound for \(f\) yield coefficient-uniform \(\cX^1\)-stability, which is
applied with the frequency truncation of the heat solution.  If
\(u_0\in H^2\), the heat
regularity result gives \(u\in C_tH_x^2\cap L_t^2H_x^3\); the limiting
solution itself can then be used as the approximate solution, giving the
linear coefficient-error estimate in
Theorem~\ref{thm:intro-B}\emph{(a)}.  See Section~\ref{sec:heat}.

\smallskip
\noindent\emph{Uniform estimates for the inviscid limit.}
At the inviscid limit, the linear estimates must remain uniform as the
dissipative part tends to zero.  Write
\(
 A=a-ib,
 ~ a\ge0,
 ~ b\ge b_0>0,
 ~ a^2+b^2=1.
\)
The complex Gaussian kernel gives uniform dispersive and \(L^2\) bounds,
and hence uniform homogeneous Strichartz estimates.  The main linear step is
the coefficient-uniform retarded double-endpoint estimate
\begin{equation}\label{eq:intro-double-endpoint}
 \left\|
  \int_{\tau<t}e^{(t-\tau)A\Delta}F(\tau)\,d\tau
 \right\|_{L_t^2L_x^{2^*}}
 \lesssim_{d,b_0}
 \|F\|_{L_t^2L_x^{(2^*)'}}.
\end{equation}
Its constant is independent of \(a\downarrow0\).  This estimate is not a
formal consequence of the usual unitary \(TT^*\) argument.  Indeed, if
\(S_A(t)=e^{tA\Delta}\), then \(S_A\) is not unitary for \(a>0\), and
\(
 S_A(t)S_A(s)^*\ne S_A(t-s).
\)
Consequently, the causal estimate for
\(
 F\mapsto\int_{\tau<t}S_A(t-\tau)F(\tau)\,d\tau
\)
does not follow directly from the standard unitary framework; at the
endpoint \(L_\tau^2\to L_t^2\), the usual time-truncation argument also does
not give the required retarded bound.

We work with the associated bilinear form.  A Whitney decomposition in time
and a spatial atomic decomposition reduce the estimate to summing over the
spatial and temporal scales, with decay when these scales are separated.
This proves
\eqref{eq:intro-double-endpoint} uniformly up to the inviscid limit.
Interpolation with the energy bounds then gives the remaining uniform
retarded estimates, which enter the same \(\cX^1\)-\(\cN^1\) stability
framework as on the zero-dispersion side.  Combining this stability with the
frequency truncation of the NLS solution gives the qualitative inviscid
limit for \(H^1\) data.  Here the double-endpoint estimate keeps the
stability constant uniform as \(a\downarrow0\), while choosing \(N\) before
\(\eps\) avoids extra derivatives on the limiting solution.

For the quantitative refinement,
Lemma~\ref{lem:nls-H3-persistence} shows that \(H^3\) initial data remain in
\(H^3\) on every compact subinterval of the NLS lifespan.  The limiting NLS
solution may then be used directly, and its residual belongs to
\(\cN^1\) with a linear bound in the coefficient errors.  This gives the
quantitative estimate in Theorem~\ref{thm:intro-B}\emph{(b)}; see
Section~\ref{sec:schrodinger}.
 \section{Preliminaries and notation}\label{sec:prelim}

Unless a smaller range is stated, $d\in\{3,4,5,6\}$ and
\[
 p=\frac4{d-2},\qquad f(u)=|u|^p u,
 \qquad 2^*=\frac{2d}{d-2},\qquad (2^*)'=\frac{2d}{d+2}.
\]
We also set
\[
 p_c=q_c=\frac{2(d+2)}{d-2},
 \qquad r_c=\frac{2d(d+2)}{d^2+4}.
\]
For \(1\le s\le\infty\), we use the abbreviation
\(
 \|g\|_s:=\|g\|_{L^s(\R^d)}.
\)
Space-time norms and norms over a specified time interval will always
carry the corresponding \(t\)- and \(x\)-subscripts.

We also fix a real-valued radial function
\(
 \chi\in C_c^\infty(\R^d),~
 0\le\chi\le1,~
 \chi(\xi)=1\ \text{if }|\xi|\le1,~
 \chi(\xi)=0\ \text{if }|\xi|\ge2,
\)
and define the smooth low-frequency cutoff \(P_{\le N}\), for \(N\ge1\),
by
\(
 \widehat{P_{\le N}g}(\xi)
 =\chi(\xi/N)\widehat g(\xi).
\)
This is the usual Littlewood--Paley notation; see, for example,
\cite{MiaoZhangZheng2025}.

The meaning of these exponents is as follows.  The exponent
\(2^*=2d/(d-2)\) is the Sobolev endpoint associated with
\(\dot H^1(\R^d)\), and \((2^*)'=2d/(d+2)\) is its dual exponent.  The
number \(p_c=2(d+2)/(d-2)\) is the scaling-critical spacetime exponent
for the energy-critical nonlinearity. We choose \(q_c=p_c\), and
\(r_c\) is then determined by the Schr\"odinger admissibility condition
\(
 2/q_c+d/r_c=d/2.
\)
Equivalently,
\(
 1/p_c=1/r_c-1/d,
 ~q_c=p_c,
\)
so that the homogeneous Sobolev embedding gives the critical
undifferentiated spacetime norm from the admissible gradient norm.  Thus
\((q_c,r_c)\) is the admissible pair used for \(\nabla u\), whereas
\(L^{p_c}_{t,x}\) is not being treated as an admissible Strichartz
output.
For a finite interval $I$, define
\begin{align*}
 \|u\|_{\cX^1(I)}={}
 \|u\|_{L_t^\infty H_x^1(I\times\R^d)}
 +\|u\|_{L_t^2W_x^{1,2^*}(I\times\R^d)}
 +\|\nabla u\|_{L_t^{q_c}L_x^{r_c}(I\times\R^d)}
 +\|u\|_{L_{t,x}^{p_c}(I\times\R^d)},
\end{align*}
and the critical norm
\[
 \|u\|_{\cZ^1(I)}
 =\|\nabla u\|_{L_t^{q_c}L_x^{r_c}(I)}
 +\|u\|_{L_{t,x}^{p_c}(I)}.
\]
The forcing space is the Banach sum
\[
 \cN^1(I)=L_t^1H_x^1(I\times\R^d)
 +L_t^2W_x^{1,(2^*)'}(I\times\R^d),
\]
equipped with
\[
 \|F\|_{\cN^1(I)}
 =\inf_{F=F_1+F_2}
 \bigl(\|F_1\|_{L_t^1H_x^1(I)}
 +\|F_2\|_{L_t^2W_x^{1,(2^*)'}(I)}\bigr).
\]
Every use of this space below respects this sum decomposition; no term is
implicitly required to belong to both summands.

We use the same symbols for norms on subintervals $J\subset I$.  All four
components of $\cX^1$ decrease under restriction.  For the sum space, if
$F=F_1+F_2$ is any admissible decomposition on $I$, then restriction gives
an admissible decomposition on $J$ and hence
\[
 \|F\|_{\cN^1(J)}
 \le \|F_1\|_{L_t^1H_x^1(J)}
 +\|F_2\|_{L_t^2W_x^{1,(2^*)'}(J)}.
\]
This elementary observation is important when a finite interval is divided
into critical-small pieces: the two summands have different time
summability and must be accumulated separately.

A pair $(q,r)$ is Schr\"odinger-admissible if
\(
 2\le q,r\le\infty,
 ~2/q+d/r=d/2.
\)
The three pairs
\(
 (\infty,2),\) $ (2,2^*),$ $~(q_c,r_c)$
are admissible.  In contrast, $(p_c,p_c)$ is not admissible.  The identity
\(
 1/p_c=1/r_c-1/d,
 ~q_c=p_c,
\)
shows instead that the homogeneous Sobolev inequality gives
\begin{equation}\label{eq:sob-pc}
 \|u\|_{L_{t,x}^{p_c}(I)}
 \lesssim_d\|\nabla u\|_{L_t^{q_c}L_x^{r_c}(I)}.
\end{equation}
We use this classical Sobolev embedding in the form recorded by
Talenti~\cite{Talenti1976}.

Interpolation between $L_t^\infty L_x^2$ and $L_t^2L_x^{2^*}$, with
$\theta=(d-2)/(d+2)=2/q_c$, gives
\begin{equation}\label{eq:aux-interp}
 \|w\|_{L_t^{q_c}L_x^{r_c}(J)}
 \le
 \|w\|_{L_t^\infty L_x^2(J)}^{1-\theta}
 \|w\|_{L_t^2L_x^{2^*}(J)}^\theta
 \le \|w\|_{\cX^1(J)}.
\end{equation}
Indeed,
\[
 \frac1{q_c}=\frac\theta2,
 \qquad
 \frac1{r_c}=\frac{1-\theta}{2}+\frac\theta{2^*}.
\]

Finally, since $p\ge1$, the map $f:\mathbb C\simeq\R^2\to\mathbb C$ is
$C^1$ and $Df$ is locally Lipschitz.  At every point where $D^2f$ exists,
\[
 |Df(z)|\lesssim_d |z|^p,
 \qquad |D^2f(z)|\lesssim_d |z|^{p-1}.
\]
Consequently,
\begin{align}
 |f(u)-f(v)|&\lesssim_d (|u|^p+|v|^p)|u-v|,
 \label{eq:pointwise-f}\\
 |\nabla(f(u)-f(v))|&\lesssim_d
 (|u|^p+|v|^p)|\nabla(u-v)|\notag\\
 &\quad+(|u|^{p-1}+|v|^{p-1})
 (|\nabla u|+|\nabla v|)|u-v|.
 \label{eq:pointwise-grad-f}
\end{align}
These inequalities are valid for Sobolev functions by the usual
almost-everywhere chain rule.  When $p=1$, the factors with exponent
$p-1$ are understood as $1$.

All scalar products below are the complex $L^2$ scalar product, linear in
the first entry, and energy identities are understood after taking real
parts.  Constants denoted by $C_d$ depend only on the dimension.  Other
dependencies are displayed explicitly; in particular, constants in the
inviscid-limit linear theory are allowed to depend on the fixed lower
bound $b_0$ but not on the dissipative coefficient $a$.
 \section{Regularity for aligned defocusing and focusing equations}
 \label{sec:aligned-regularity}

We consider the aligned family
\begin{equation}\label{eq:aligned-cgl}
        \partial _t u-z\Delta u+\mu zf(u)=0,
        \qquad |z|=1,\qquad \nu:=\operatorname{Re}z>0,
        \qquad \mu\in\{+1,-1\}.
\end{equation}
Here \(\mu=+1\) is defocusing and \(\mu=-1\) is focusing.  We give a common
regularity argument for the defocusing and focusing equations.  We use the global strong \(\dot H^1\) theory as an
input and do not reprove global well-posedness.  For the defocusing equation
this input is provided by Huang and Wang~\cite{HuangWang2008}; see also
\cite{ChengGuoZheng2025Limit}.  For the focusing equation it is provided,
under the ground-state threshold, by Cheng, Guo, and
Zheng~\cite{ChengGuoZheng2026JMPA}.

The precise input needed below is the following.  On every finite interval
\([0,T]\), the global strong solution is the limit in
\(C_t\dot H_x^1\cap L_{t,x}^{p_c}\) of classical solutions with smooth,
compactly supported data, and uniqueness holds in the corresponding strong
\(\dot H^1\) class.  These approximation, stability, and uniqueness
properties are part of the cited homogeneous theories.

Throughout the rest of this section, let \(d\in\{3,4\}\),
\(\mu\in\{+1,-1\}\), and let \(z\) satisfy the conditions in
\eqref{eq:aligned-cgl}.  We assume that \(u\) is a global strong
\(\dot H^1\) solution whose solution theory has the approximation and
uniqueness properties stated above.  We first recover continuity in
\(H^1\), then prove smoothness for positive times, and finally treat
continuity in \(C_0\).

\subsection{The \texorpdfstring{$L^2$}{L2} component}

The homogeneous theories give continuity in $\dot H^1$, but this does not
include the \(L^2\) component.  We first show that the smooth approximating solutions
also converge in \(L^2\).  The argument uses the smoothing of the complex
heat semigroup from \(H^{-1}\) to \(L^2\).

\begin{lemma}\label{lem:aligned-L2-convergence}
Suppose that $u$ is a strong $\dot H^1$ solution of
\eqref{eq:aligned-cgl} on $[0,T]$, with initial data
$u_0\in H^1(\R^d)$.  Assume that
$u_{0,n}\in C_c^\infty(\R^d)$ and that the corresponding classical
solutions $u_n$ satisfy
\begin{equation}\label{eq:aligned-approx-data}
        u_{0,n}\to u_0\quad\text{in }H^1(\R^d)
\end{equation}
and
\begin{equation}\label{eq:aligned-approx-conv}
        \|u_n-u\|_{C([0,T];\dot H^1)}
        +\|u_n-u\|_{L^{p_c}_{t,x}((0,T)\times\R^d)}\to0.
\end{equation}
Then
\begin{equation}\label{eq:aligned-L2-conv}
        u_n\to u\quad\text{in }C([0,T];L^2(\R^d)).
\end{equation}
\end{lemma}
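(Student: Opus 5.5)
The plan is to show that \((u_n)\) is Cauchy in \(C([0,T];L^2)\), working only with the Duhamel formula for the classical solutions \(u_n\). This avoids having to justify a mild formulation for \(u\) at the \(L^2\) level. The limit is then identified with \(u\) by the \(\dot H^1\) convergence \eqref{eq:aligned-approx-conv}.

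\emph{Setup.} For the classical solutions (with \(H^1\) data, so \(u_n\in C([0,T];H^1)\)), set \(w=u_n-u_m\). Then
\[
 w(t)=e^{tz\Delta}(u_{0,n}-u_{0,m})
 -\mu z\int_0^t e^{(t-s)z\Delta}\bigl(f(u_n)-f(u_m)\bigr)(s)\,ds .
\]
Two linear facts are needed, both read off from the multiplier \(e^{-\tau z|\xi|^2}\), whose modulus is \(e^{-\nu\tau|\xi|^2}\) with \(\nu=\operatorname{Re}z>0\).
\begin{itemize}
\item The semigroup is an \(L^2\) contraction.
\item It gains one derivative:
\[
 \|e^{\tau z\Delta}g\|_{2}\le\sup_\xi |\xi|e^{-\nu\tau|\xi|^2}\,\|g\|_{\dot H^{-1}}\lesssim(\nu\tau)^{-1/2}\|g\|_{\dot H^{-1}}.
\]
\end{itemize}
By duality with the Sobolev embedding \(\dot H^1\subset L^{2^*}\), we also have \(\|g\|_{\dot H^{-1}}\lesssim_d\|g\|_{(2^*)'}\).

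\emph{Nonlinear estimate.} By \eqref{eq:pointwise-f} and H\"older's inequality with
\[
 \frac1{(2^*)'}=\frac{p}{2^*}+\frac1{2^*},
\]
which holds because \(p\cdot\frac d2=2^*\), we get
\[
 \|f(u_n)-f(u_m)\|_{(2^*)'}\lesssim\bigl(\|u_n\|_{2^*}^p+\|u_m\|_{2^*}^p\bigr)\|w\|_{2^*}
 \lesssim M^p\,\|\nabla w\|_2 .
\]
Here \(M:=\sup_n\|u_n\|_{C([0,T];\dot H^1)}\), which is finite by \eqref{eq:aligned-approx-conv}. Inserting this bound into Duhamel and using \(\int_0^t(t-s)^{-1/2}\,ds\le2\sqrt T\) gives
\[
 \sup_{t\le T}\|w(t)\|_2\le\|u_{0,n}-u_{0,m}\|_2+C_d\,\nu^{-1/2}\sqrt T\,M^p\,\|u_n-u_m\|_{C([0,T];\dot H^1)} .
\]
By \eqref{eq:aligned-approx-data} and \eqref{eq:aligned-approx-conv}, the right-hand side tends to \(0\). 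No Gronwall step or smallness condition is needed, since the whole nonlinear term is controlled by the \(\dot H^1\) difference, which is already known to converge.

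\emph{Identification of the limit.} Since \(u_n\) is Cauchy in \(C([0,T];L^2)\), it has a limit \(\tilde u\) there. Also \(u_n\to u\) in \(C([0,T];\dot H^1)\), hence in \(C([0,T];L^{2^*})\), so both convergences imply convergence in distributions for each \(t\). Therefore \(\tilde u=u\), which gives \eqref{eq:aligned-L2-conv}, and as a by-product \(u\in C([0,T];H^1)\).

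The main point needing care is the first step: confirming that the classical solutions with \(C_c^\infty\) data lie in \(C([0,T];H^1)\) and satisfy the \(L^2\)-valued Duhamel formula on all of \([0,T]\). I would take this from the local \(H^1\) theory for smooth data together with the fact that \(u_n\) exists on \([0,T]\) by hypothesis. The remaining estimates are routine.
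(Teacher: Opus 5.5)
Your argument is correct and uses the same mechanism as the paper's proof: Duhamel's formula, the $L^2$ contraction of $e^{tz\Delta}$, the one-derivative gain from $H^{-1}$ (or $\dot H^{-1}$) to $L^2$ with the integrable $(t-s)^{-1/2}$ singularity, and the bound of $f(u_n)-f(\cdot)$ in $L^{(2^*)'}$ by the $\dot H^1$ difference. The only variation is that you run it as a Cauchy argument for $u_n-u_m$ and identify the limit distributionally, whereas the paper compares $u_n$ with $u$ directly through the Duhamel formula of the strong solution $u$; your version has the small advantage of never invoking the mild formulation of $u$ itself.
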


\begin{proof}
Set
\(
        S_z(t):=e^{tz\Delta},
        w_n:=u_n-u.
\)
The Duhamel formulas for $u_n$ and $u$ give
\begin{equation}\label{eq:aligned-diff-duhamel}
        w_n(t)=S_z(t)(u_{0,n}-u_0)
        -\mu z\int_0^t S_z(t-r)\bigl(f(u_n(r))-f(u(r))\bigr)\,dr.
\end{equation}
By Plancherel,
\begin{equation}\label{eq:aligned-L2-contraction}
        \|S_z(t)g\|_2^2
        =\int e^{-2\nu t|\xi|^2}|\widehat g(\xi)|^2\,d\xi
        \le \|g\|_2^2.
\end{equation}
The same calculation gives, for $0<r\le T$,
\begin{equation}\label{eq:aligned-Hminus1-L2}
\begin{aligned}
\|S_z(r)h\|_2^2
&=\int e^{-2\nu r|\xi|^2}|\widehat h(\xi)|^2\,d\xi \\
&\le \sup_{\rho\ge0}\bigl((1+\rho^2)e^{-2\nu r\rho^2}\bigr)
        \int (1+|\xi|^2)^{-1}|\widehat h(\xi)|^2\,d\xi \\
&\le C_{\nu,T}r^{-1}\|h\|_{H^{-1}}^2.
\end{aligned}
\end{equation}
By \eqref{eq:pointwise-f}, 
$L^{(2^*)'}(\R^d)\hookrightarrow H^{-1}(\R^d)$,
H\"older's inequality and Sobolev embedding give
\begin{equation}\label{eq:aligned-F-Hminus1}
\begin{aligned}
\|f(u_n(r))-f(u(r))\|_{H^{-1}}
&\le C_d\|f(u_n(r))-f(u(r))\|_{L^{(2^*)'}} \\
&\le C_d\bigl(\|u_n(r)\|_{2^*}^p+\|u(r)\|_{2^*}^p\bigr)
        \|u_n(r)-u(r)\|_{2^*} \\
&\le C_{d,T,u}\|u_n(r)-u(r)\|_{\dot H^1}.
\end{aligned}
\end{equation}
The last quantity tends to zero uniformly on
$[0,T]$ by \eqref{eq:aligned-approx-conv}.  Therefore
\[
\begin{aligned}
\sup_{0\le t\le T}\|w_n(t)\|_2
&\le \|u_{0,n}-u_0\|_2 \\
&\quad+C_{\nu,T}\sup_{0\le r\le T}
\|f(u_n(r))-f(u(r))\|_{H^{-1}}
\sup_{0\le t\le T}\int_0^t(t-r)^{-1/2}\,dr
\longrightarrow0.
\end{aligned}
\]
This proves \eqref{eq:aligned-L2-conv}.
\end{proof}

The preceding lemma allows us to pass the mass identity for the smooth
approximations to the given strong solution.  Combining the resulting
\(L^2\) continuity with the known \(\dot H^1\) continuity yields the
first statement.

\begin{proposition}
\label{prop:aligned-H1-regularity}
Let \(u\) satisfy the standing assumptions of this section, and assume
that \(u_0\in H^1(\R^d)\).
Then
\[
        u\in C([0,+\infty);H^1(\R^d)).
\]
Moreover, for every $0\le t_1\le t_2<\infty$,
\begin{equation}\label{eq:aligned-limit-mass-id}
        \|u(t_2)\|_2^2-\|u(t_1)\|_2^2
        =-2\nu\int_{t_1}^{t_2}\|\nabla u(r)\|_2^2\,dr
        -2\mu\nu\int_{t_1}^{t_2}\|u(r)\|_{2^*}^{2^*}\,dr.
\end{equation}
For every $T<\infty$,
\begin{equation}\label{eq:aligned-H1-bound}
        \sup_{0\le t\le T}\|u(t)\|_{H^1}^2
        \le \left(\sup_{0\le t\le T}\|\nabla u(t)\|_2\right)^2+\|u_0\|_2^2
        +C_d\nu T \left(\sup_{0\le t\le T}\|\nabla u(t)\|_2\right)^{2^*}.
\end{equation}
If \(\mu=+1\), then also
\begin{equation}\label{eq:aligned-defocusing-L2-contraction}
        \|u(t)\|_2\le \|u_0\|_2,
        \qquad t\ge0.
\end{equation}
\end{proposition}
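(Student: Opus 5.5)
We fix \(T<\infty\) and work with the smooth approximations \(u_n\) from the standing assumptions: \(u_{0,n}\in C_c^\infty\), \(u_{0,n}\to u_0\) in \(H^1\), and \(u_n\to u\) in \(C([0,T];\dot H^1)\cap L^{p_c}_{t,x}\). Lemma~\ref{lem:aligned-L2-convergence} then upgrades this to \(u_n\to u\) in \(C([0,T];L^2)\). Together with the \(\dot H^1\) convergence, we get \(u_n\to u\) in \(C([0,T];H^1)\). Each \(u_n\) is a classical solution and so lies in \(C([0,T];H^1)\); hence the uniform limit \(u\) is in \(C([0,T];H^1)\). Since \(T\) is arbitrary, \(u\in C([0,\infty);H^1)\).

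For the mass identity \eqref{eq:aligned-limit-mass-id}, we first derive it for the classical solutions \(u_n\). Pair the equation with \(u_n\) in \(L^2\) and take real parts. Using \(\operatorname{Re}\langle z\Delta u_n,u_n\rangle=-\nu\|\nabla u_n\|_2^2\) and \(\operatorname{Re}\langle \mu z f(u_n),u_n\rangle=\mu\nu\|u_n\|_{2^*}^{2^*}\), we obtain
\[
\frac{d}{dt}\|u_n\|_2^2=-2\nu\|\nabla u_n\|_2^2-2\mu\nu\|u_n\|_{2^*}^{2^*}.
\]
This is justified for classical solutions with sufficient decay; that is part of the classical theory for \(C_c^\infty\) data, and if needed we can regularize in time and space. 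Integrating from \(t_1\) to \(t_2\) and letting \(n\to\infty\), the left side converges by the \(C_tL^2\) convergence. The gradient term converges by the \(C_t\dot H^1\) convergence. The \(L^{2^*}\) term converges because Sobolev embedding gives \(\|u_n(r)-u(r)\|_{2^*}\lesssim\|u_n(r)-u(r)\|_{\dot H^1}\) uniformly in \(r\), together with the uniform bound on \(\sup_r\|u(r)\|_{\dot H^1}\). This gives \eqref{eq:aligned-limit-mass-id} on \([0,T]\), and hence for all times.

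The bounds then follow from the identity. In the defocusing case \(\mu=+1\), both integrals on the right are nonnegative, so with \(t_1=0\) we get \eqref{eq:aligned-defocusing-L2-contraction}. In general, we drop the nonpositive gradient term. The remaining term satisfies \(-2\mu\nu\int\|u\|_{2^*}^{2^*}\le 2\nu\int_0^t\|u\|_{2^*}^{2^*}\), and by Sobolev this is at most \(C_d\nu T(\sup\|\nabla u\|_2)^{2^*}\). Adding \(\|\nabla u(t)\|_2^2\) gives \eqref{eq:aligned-H1-bound}.

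The main point needing care is justifying the mass identity at the approximate level: we need enough decay of \(u_n\) (for instance \(u_n\in C_tH^2\)) for the integration by parts. Lemma~\ref{lem:aligned-L2-convergence} already handles the essential passage to the limit.
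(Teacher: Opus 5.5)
Your proposal is correct and takes essentially the same route as the paper. You derive the mass identity for the smooth approximations $u_n$, pass to the limit using Lemma~\ref{lem:aligned-L2-convergence}, the $C_t\dot H^1$ convergence and Sobolev, and then read off continuity, the $H^1$ bound and the defocusing contraction. The only difference is the approximate-level integration by parts: the paper justifies it with a spatial cutoff $\eta_R$ using only $\nabla u_n(t)\in L^2$ and $u_n(t)\in L^{2^*}$, since the boundary term is bounded by $\|\nabla u_n\|_{L^2(R\le|x|\le2R)}\|u_n\|_{L^{2^*}(R\le|x|\le2R)}\to0$, whereas you appeal to $H^2$ regularity of the classical solutions.
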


\begin{proof}
Fix $T<\infty$.  The approximation property assumed above gives
$u_{0,n}\in C_c^\infty(\R^d)$ and classical solutions $u_n$ on $[0,T]$ satisfying
\eqref{eq:aligned-approx-data} and \eqref{eq:aligned-approx-conv}.

We first derive the mass identity for \(u_n\).  Choose
\(\eta\in C_c^\infty(\R^d)\) with \(\eta=1\) on \(B_1\) and \(\eta=0\)
outside \(B_2\), and set \(\eta_R(x)=\eta(x/R)\).  For each fixed time,
\[
        \int \eta_R\Delta u_n\,\overline{u_n}
        =-\int\eta_R|\nabla u_n|^2
        -\int\nabla\eta_R\cdot\nabla u_n\,\overline{u_n}.
\]
The last term tends to zero as $R\to\infty$.  Indeed,
\[
\begin{aligned}
\left|\int\nabla\eta_R\cdot\nabla u_n\,\overline{u_n}\right|
&\le CR^{-1}
\|\nabla u_n\|_{L^2(R\le|x|\le2R)}
\|u_n\|_{L^2(R\le|x|\le2R)} \\
&\le C_d
\|\nabla u_n\|_{L^2(R\le|x|\le2R)}
\|u_n\|_{L^{2^*}(R\le|x|\le2R)}.
\end{aligned}
\]
Since $\nabla u_n(t)\in L^2$ and $u_n(t)\in L^{2^*}$, the right-hand
side tends to zero.  Thus
\begin{equation}\label{eq:aligned-lapl-pairing}
        \int_{\R^d}\Delta u_n\,\overline{u_n}
        =-\int_{\R^d}|\nabla u_n|^2.
\end{equation}
Since $\partial _t u_{n}=z(\Delta u_n-\mu f(u_n))$, we obtain
\[
\begin{aligned}
\frac12\frac{d}{dt}\|u_n(t)\|_2^2
=\operatorname{Re}\int \partial _t u_{n}\overline{u_n} 
=-\nu\|\nabla u_n(t)\|_2^2
  -\mu\nu\|u_n(t)\|_{2^*}^{2^*}.
\end{aligned}
\]
Therefore, for $0\le t_1\le t_2\le T$,
\begin{equation}\label{eq:aligned-approx-mass-id}
        \|u_n(t_2)\|_2^2-\|u_n(t_1)\|_2^2
        =-2\nu\int_{t_1}^{t_2}\|\nabla u_n(r)\|_2^2\,dr
        -2\mu\nu\int_{t_1}^{t_2}\|u_n(r)\|_{2^*}^{2^*}\,dr.
\end{equation}

Lemma~\ref{lem:aligned-L2-convergence} gives
$u_n\to u$ in $C([0,T];L^2)$.  The gradient terms in
\eqref{eq:aligned-approx-mass-id} converge because
$u_n\to u$ in $C([0,T];\dot H^1)$.  The nonlinear terms also converge.
Indeed,
\[
\left|\|u_n(r)\|_{2^*}^{2^*}-\|u(r)\|_{2^*}^{2^*}\right|
\le C_{d,T,u}\|u_n(r)-u(r)\|_{2^*},
\]
and the right-hand side tends to zero uniformly in $r$ by Sobolev
embedding and \eqref{eq:aligned-approx-conv}.  Passing to the limit in
\eqref{eq:aligned-approx-mass-id} proves \eqref{eq:aligned-limit-mass-id}.
When \(\mu=+1\), both integrals on the right-hand side are nonpositive;
taking \(t_1=0\) gives
\eqref{eq:aligned-defocusing-L2-contraction}.

By the Sobolev inequality,
\begin{equation}\label{eq:aligned-L2-bound}
\begin{aligned}
\sup_{0\le t\le T}\|u(t)\|_2^2
&\le \|u_0\|_2^2
+C_d\nu T
\left(\sup_{0\le r\le T}\|\nabla u(r)\|_2\right)^{2^*}.
\end{aligned}
\end{equation}
The strong solution belongs to $C([0,T];\dot H^1)$.  Moreover,
Lemma~\ref{lem:aligned-L2-convergence} gives $u\in C([0,T];L^2)$.
Hence $u\in C([0,T];H^1)$, and \eqref{eq:aligned-H1-bound} follows from
\eqref{eq:aligned-L2-bound}.  Since $T$ is arbitrary, the conclusion
holds on $[0,+\infty)$.

We also note that the equation holds in $H^{-1}$.  Since
\(
        \|\Delta u(t)\|_{H^{-1}}\le\|\nabla u(t)\|_2
\)
and
\[
        \|f(u(t))\|_{H^{-1}}
        \le C_d\|\nabla u(t)\|_2^{p+1},
\]
we have, for a.e. $t\in(0,T)$,
\begin{equation}\label{eq:aligned-ut-Hminus1}
        \|u_t(t)\|_{H^{-1}}
        \le |z|\left(\|\nabla u(t)\|_2
        +C_d\|\nabla u(t)\|_2^{p+1}\right).
\end{equation}
\end{proof}

\subsection{Smoothness for positive times}

We next study the solution away from the initial time.  The following
lemma is a uniform form of the absolute continuity of the integrals of
\(|\nabla u(t)|^2\) and \(|u(t)|^{2^*}\).  For finitely many reference times,
absolute continuity makes these integrals small on every ball with
sufficiently small radius.  The uniform continuity of
\(u:[a,b]\to H^1(\R^d)\), together with the Sobolev embedding, then gives
the same estimate for every \(t\in[a,b]\).

\begin{lemma}\label{lem:aligned-uniform-local-smallness}
Let $0<a<b<\infty$ and let
$u\in C([a,b];H^1(\R^d))$.  For every $\eta>0$ there exists
$\rho>0$ such that
\begin{equation}\label{eq:local-smallness-rho-general}
        \|\nabla u(t)\|_{L^2(B_\rho(x_0))}
        +\|u(t)\|_{L^{2^*}(B_\rho(x_0))}<\eta
\end{equation}
for every $t\in[a,b]$ and every $x_0\in\R^d$.
\end{lemma}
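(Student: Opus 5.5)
The plan is a compactness argument: the set \(\{u(t):t\in[a,b]\}\) is compact in \(H^1\), and a uniform small-ball estimate holds on any compact set of \(H^1\) functions. We need two ingredients.

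\emph{Single function.} For a fixed \(g\in H^1(\R^d)\), the functions \(|\nabla g|^2\) and \(|g|^{2^*}\) are integrable; the second by the Sobolev embedding \(H^1\hookrightarrow L^{2^*}\). By absolute continuity of the Lebesgue integral, for every \(\eta'>0\) there is \(\delta>0\) such that every measurable set \(E\) with \(|E|<\delta\) satisfies
\[
\int_E|\nabla g|^2+\int_E|g|^{2^*}<\eta'.
\]
Balls \(B_\rho(x_0)\) have measure \(c_d\rho^d\) independent of \(x_0\). Hence there is \(\rho(g)>0\) with \(\|\nabla g\|_{L^2(B_\rho(x_0))}+\|g\|_{L^{2^*}(B_\rho(x_0))}<\eta/2\) uniformly in \(x_0\). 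The uniformity in the center is exactly what absolute continuity gives, since only the measure of the ball matters.

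\emph{Uniform continuity.} Since \([a,b]\) is compact and \(u\in C([a,b];H^1)\), the map \(t\mapsto u(t)\) is uniformly continuous. Choose \(\delta_0>0\) such that \(|t-s|<\delta_0\) implies \(\|u(t)-u(s)\|_{H^1}<\eta/(2(1+C_S))\), where \(C_S\) is the Sobolev constant for \(\dot H^1\hookrightarrow L^{2^*}\). Pick finitely many reference times \(t_1,\dots,t_m\) forming a \(\delta_0\)-net of \([a,b]\). Apply the single-function step to each \(g=u(t_j)\) and set \(\rho=\min_j\rho(u(t_j))>0\).

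\emph{Conclusion.} For \(t\in[a,b]\), choose \(t_j\) with \(|t-t_j|<\delta_0\). By the triangle inequality on \(B_\rho(x_0)\),
\[
\|\nabla u(t)\|_{L^2(B_\rho)}+\|u(t)\|_{L^{2^*}(B_\rho)}
\le \tfrac{\eta}{2}+\|\nabla(u(t)-u(t_j))\|_2+C_S\|u(t)-u(t_j)\|_{\dot H^1}<\eta.
\]
Here the global norms dominate the local ones, and the estimate holds for every \(x_0\in\R^d\).

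There is no serious obstacle. The only point requiring care is that the single-function smallness must be uniform in the center \(x_0\). This is guaranteed by absolute continuity with respect to the measure of the set, not by the dominated convergence theorem applied at a fixed center.
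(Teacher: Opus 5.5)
Your proof is correct and takes essentially the same route as the paper. The paper uses a finite $H^1$-net of the compact set $\{u(t):t\in[a,b]\}$, while you take the net to be $u(t_1),\dots,u(t_m)$ at reference times supplied by uniform continuity; both then combine absolute continuity (uniform in the center $x_0$, since only $|B_\rho|$ matters) with the triangle inequality and Sobolev embedding.
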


\begin{proof}
The set
\(
 K:=\{u(t):t\in[a,b]\}
\)
is compact in \(H^1(\R^d)\).  Choose a finite \(H^1\)-net
\(g_1,\dots,g_N\) for \(K\), with its radius to be fixed below.  For each
\(j\), absolute continuity of the integrals of \(|\nabla g_j|^2\) and
\(|g_j|^{2^*}\) gives a number \(\delta_j>0\) such that these integrals are
as small as desired on every measurable set of measure less than
\(\delta_j\).  Choose \(\rho>0\) so that
\(|B_\rho|<\min_j\delta_j\).  This choice is uniform in the center of the
ball.

For any \(t\in[a,b]\), choose \(j\) with \(u(t)\) close to \(g_j\) in
\(H^1\).  Then
\[
 \|\nabla u(t)\|_{L^2(B_\rho(x_0))}
 \le \|\nabla g_j\|_{L^2(B_\rho(x_0))}
      +\|u(t)-g_j\|_{H^1},
\]
and, by Sobolev embedding,
\[
 \|u(t)\|_{L^{2^*}(B_\rho(x_0))}
 \le \|g_j\|_{L^{2^*}(B_\rho(x_0))}
      +C_d\|u(t)-g_j\|_{H^1}.
\]
First taking the absolute-continuity bounds small enough and then taking
the finite-net radius small enough proves
\eqref{eq:local-smallness-rho-general}.
\end{proof}

We also need the local smallness criterion in both the focusing and
defocusing cases.  The result is
proved for the focusing equation in
\cite[Proposition~5.2]{ChengGuoZheng2026JMPA}.  The next lemma records that
the same proof applies to \(\mu=+1\).

\begin{lemma}
\label{lem:aligned-local-regularity}
Let \(Q_r:=(-r^2,0)\times B_r\), let \(d\in\{3,4\}\), and let
\(\mu\in\{+1,-1\}\).  There exists
\(\varepsilon_0=\varepsilon_0(d,z)>0\) such that, if a strong solution
\(U\) of \eqref{eq:aligned-cgl} on \(Q_1\) satisfies
\[
 \sup_{-1<s<0}\left(
 \|\nabla U(s)\|_{L^2(B_1)}
 +\|U(s)\|_{L^{2^*}(B_1)}\right)<\varepsilon_0,
\]
then \(U\) is smooth on \(Q_{1/2}\), and for every integer \(k\ge0\),
\[
 \sup_{Q_{1/2}}|\nabla^kU|
 \le C_{k,d,z}\varepsilon_0.
\]
\end{lemma}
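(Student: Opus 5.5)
The plan is to follow the $\varepsilon$-regularity scheme of \cite[Proposition~5.2]{ChengGuoZheng2026JMPA} and check that the sign $\mu$ never enters. The only place the focusing sign could matter is through a coercive energy, and we use none. The mechanism is purely perturbative: smallness of the critical local quantities makes the nonlinearity a small perturbation of the linear complex heat flow $\partial_t-z\Delta$ with $\operatorname{Re}z=\nu>0$. That flow is uniformly parabolic and has smoothing constants depending only on $d$ and $z$. Since $|\mu|=1$, every estimate is insensitive to whether $\mu=+1$ or $-1$.

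\emph{Step 1: localize.} Fix a cutoff $\phi\in C_c^\infty(Q_1)$ equal to $1$ on $Q_{3/4}$ and set $V=\phi U$. Then $V$ solves
\[
\partial_tV-z\Delta V=-\mu z\phi f(U)+(\partial_t\phi)U-z(2\nabla\phi\cdot\nabla U+(\Delta\phi)U).
\]
We write $V$ by Duhamel with the semigroup $S_z$. Its kernel is a complex Gaussian whose modulus is bounded by a real Gaussian of variance comparable to $\nu^{-1}|z|^2t$. This gives the standard $L^q\to L^r$ and gradient smoothing bounds, with constants depending only on $d$ and $z$.

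\emph{Step 2: improve integrability.} For the nonlinear term we use $|\phi f(U)|\le|U|^{p}|\phi U|$. The coefficient $|U|^p$ has $L^\infty_tL^{d/2}_x(Q_1)$-norm at most $\varepsilon_0^p$. Hence the map $V\mapsto S_z*(\phi|U|^pV)$ is a small operator on $L^\infty_tL^q_x$ for every $q$ in a range determined by parabolic Hardy--Littlewood--Sobolev. The linear cutoff terms involve only $U$ and $\nabla U$, which lie in $L^\infty_tL^{2^*}$ and $L^\infty_tL^2$. A bootstrap then raises the integrability of $U$ on shrinking cylinders, up to $L^\infty$ on $Q_{5/8}$. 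We first control $U\in L^\infty_tL^q_x$ for large $q$, then $\nabla U$ in higher $L^q$, with bounds $\lesssim\varepsilon_0$ obtained by linearity in the data. In $d=3,4$ finitely many steps suffice, since $2^*$ and $d/2$ allow closing through subcritical exponents.

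\emph{Step 3: higher derivatives.} Once $U\in L^\infty(Q_{5/8})$, the nonlinearity is locally bounded. Schauder estimates, or iterated Duhamel with derivatives placed on the kernel, give all derivatives on $Q_{1/2}$. For $d=3$, $p=4$ and $f$ is polynomial in $U,\overline U$. For $d=4$, $p=2$ and $f(U)=|U|^2U$ is also polynomial. So smoothness of $U$ propagates to $f(U)$ without loss, and the bounds $\sup_{Q_{1/2}}|\nabla^kU|\le C_{k,d,z}\varepsilon_0$ follow by induction in $k$.

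\emph{Main obstacle.} The key step is Step~2, the first gain of integrability from the critical smallness, because the nonlinearity is exactly critical there. I would handle it with a contraction or absorption argument in a weighted Morrey-type norm, or more simply $L^\infty_tL^q_x$ with $q$ slightly above $2^*$. The smallness $\varepsilon_0^p$ of the $L^{d/2}$ coefficient absorbs the critical term, as in the cited proof. I would then note explicitly that $\mu$ appears only as a factor of modulus one in these estimates.
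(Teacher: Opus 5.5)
Your point that $\mu$ enters only as a factor of modulus one agrees with the paper. However, the mechanism you propose for the critical step fails as written. In Step~2, take $W=|U|^p\in L^\infty_tL^{d/2}_x$ and $V(s)\in L^q$. Then $WV(s)\in L^r$ with $\frac1r=\frac2d+\frac1q$, and
\[
 \|S_z(t-s)\|_{L^r\to L^q}\lesssim (t-s)^{-\frac d2\left(\frac1r-\frac1q\right)}=(t-s)^{-1},
\]
which is not integrable at $s=t$, for every $q$.

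This is not an artifact of the crude bound. Take $W(s,x)=(t-s)^{-1}w(x/\sqrt{t-s})$ and $V(s,x)=(t-s)^{-d/(2q)}v(x/\sqrt{t-s})$ with Gaussian $w,v$. Then $\sup_s\|W(s)\|_{L^{d/2}}$ and $\sup_s\|V(s)\|_{L^q}$ are fixed, and the former can be made as small as you like. Yet the Duhamel integral at time $t$ behaves like $c\,|x|^{-d/q}$ near the origin with $c\neq0$, so it is not in $L^q_{\mathrm{loc}}$. Hence $V\mapsto\int S_z(t-s)(\phi|U|^pV)(s)\,ds$ is not bounded, let alone small, on $L^\infty_tL^q_x$ for any $q$. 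Since $L^\infty_tL^{d/2}_x$ is the time-endpoint of the critical scale, bounds that use only the size of the kernel, such as parabolic Hardy--Littlewood--Sobolev, cannot rescue it. Your ``weighted Morrey'' alternative is left unspecified.

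The paper, via \cite[Proposition~5.2]{ChengGuoZheng2026JMPA}, avoids this by using local $L^2$ energy estimates. It tests the differentiated equation against $\phi^2\nabla\overline U$ and bounds the critical term by $\|U\|_{L^{2^*}(B_1)}^p\|\phi\nabla U\|_{L^{2^*}}^2$. This is absorbed into $\nu\|\phi\nabla^2U\|_{L^2}^2$ by Sobolev. The $L^\infty_tL^{d/2}_x$ potential is therefore always paired with two factors in $L^2_tL^{2^*}_x$, so the time endpoint never appears; higher orders are handled the same way.

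If you want to keep your Duhamel route, replace $L^\infty_tL^q_x$ by $L^s_tL^q_x$ with $1<s<\infty$. Then use maximal $L^s_tL^r_x$ regularity for $\partial_t-z\Delta$, which is a parabolic Calder\'on--Zygmund estimate rather than a kernel-size bound. It gives $\|(\partial_t-z\Delta)^{-1}(WV)\|_{L^s_tL^q_x}\lesssim_q\|W\|_{L^\infty_tL^{d/2}_x}\|V\|_{L^s_tL^q_x}$ for $d/(d-2)<q<\infty$. Identify the resulting fixed point with $V=\phi U$ by contracting simultaneously in $L^s_tL^{2^*}_x$, where $V$ is already known to lie; do not absorb a norm that is not known to be finite. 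Once $q$ is large, $|U|^p$ lies in a subcritical class, and the rest of your bootstrap goes through.

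Finally, your cutoff should vanish only near the parabolic boundary. No $\phi\in C_c^\infty(Q_1)$ can equal $1$ on $Q_{3/4}$, because $Q_{3/4}$ reaches $t=0$.
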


\begin{proof}
We explain why the proof of
\cite[Proposition~5.2]{ChengGuoZheng2026JMPA} is independent of \(\mu\).
 Differentiating \eqref{eq:aligned-cgl} in space gives
\[
 \partial_s\nabla U-z\Delta\nabla U
 =-\mu z\nabla f(U).
\]
Let \(\phi\) be a cutoff supported in \(Q_1\).  Testing this equation
against \(\phi^2\nabla\overline U\) and taking real parts gives
\[
\begin{aligned}
 \frac12\frac{d}{ds}\|\phi\nabla U(s)\|_2^2
 +\nu\|\phi\nabla^2U(s)\|_2^2 
 \le
 C_\phi\|\nabla U(s)\|_{L^2(B_1)}^2
 +C|z|\int\phi|\nabla\phi|\,|\nabla U|\,|\nabla^2U|
 +C_d\int\phi^2|U|^p|\nabla U|^2,
\end{aligned}
\]
where the first term on the right also includes the time derivative of the
cutoff.  Young's inequality absorbs the middle term at the cost of half of
the diffusion term.  After integration in time, this yields
\[
\begin{aligned}
 \|\phi\nabla U\|_{L_s^\infty L_y^2}^2
 +\nu\|\phi\nabla^2U\|_{L_{s,y}^2}^2 
 \le C_{d,z,\phi}\|\nabla U\|_{L^2(Q_1)}^2
 +C_d\int_{Q_1}\phi^2|U|^p|\nabla U|^2.
\end{aligned}
\]
Here \(|z|=1\), and the nonlinear term is estimated in absolute value, so
the estimate is unchanged for \(\mu=+1\) and \(\mu=-1\).  At each time,
H\"older and Sobolev inequalities give
\[
 \int_{B_1}\phi^2|U|^p|\nabla U|^2
 \le \|U\|_{L^{2^*}(B_1)}^p
      \|\phi\nabla U\|_{L^{2^*}(B_1)}^2.
\]
Thus the last term is absorbed when \(\varepsilon_0\) is small.  This is
the first bootstrap estimate in the cited proof.  After further spatial
 differentiation, \(-\mu z\) remains a single factor multiplying every term
coming from \(f(U)\).  Each such term is again estimated in absolute value
by the product and Sobolev estimates in that proof.  Hence neither the
higher-order local energy estimates nor the final parabolic iteration
distinguishes between the focusing and defocusing cases.  The proof
of the cited proposition therefore
gives the stated conclusion for both \(\mu=+1\) and \(\mu=-1\).
\end{proof}

After parabolic rescaling, the preceding lemma puts the solution within
the range of Lemma~\ref{lem:aligned-local-regularity}.  This gives uniform pointwise bounds for
spatial derivatives on compact positive-time intervals.  The semigroup
formula can then be used to obtain continuity in every Sobolev space.

\begin{proposition}
\label{prop:aligned-positive-time-regularity}
Let \(u\) satisfy the standing assumptions of this section.  Assume that
\[
        u\in C([0,+\infty);H^1(\R^d)).
\]
Then, for every $0<\tau<T<\infty$ and every integer $m\ge1$,
\begin{equation}\label{eq:positive-time-Hm-tau-T}
        u\in C([\tau,T];H^m(\R^d)).
\end{equation}
Consequently,
\[
        u\in C((0,+\infty);C^\infty(\R^d)\cap C_0(\R^d)).
\]
\end{proposition}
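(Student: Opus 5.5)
Fix $0<\tau<T<\infty$ and work on $[\tau/2,T]$. The plan has three steps: derive uniform pointwise derivative bounds from the local regularity criterion, upgrade them to $L^2$-based Sobolev bounds with the semigroup, and then read off time continuity from the Duhamel formula.

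\emph{Step 1: pointwise bounds.} By Proposition~\ref{prop:aligned-H1-regularity}, $u\in C([\tau/4,T];H^1)$. Lemma~\ref{lem:aligned-uniform-local-smallness}, applied with $\eta=\varepsilon_0$ from Lemma~\ref{lem:aligned-local-regularity}, gives a radius $\rho>0$ with $\rho^2\le\tau/4$ such that $\|\nabla u(t)\|_{L^2(B_\rho(x_0))}+\|u(t)\|_{L^{2^*}(B_\rho(x_0))}<\varepsilon_0$ for all $t\in[\tau/4,T]$ and all $x_0$. For $(t_0,x_0)\in[\tau/2,T]\times\R^d$, set
\[
U(s,y)=\rho^{(d-2)/2}u(t_0+\rho^2 s,x_0+\rho y).
\]
This $U$ solves \eqref{eq:aligned-cgl} on $Q_1$, and both localized norms are scale invariant. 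Lemma~\ref{lem:aligned-local-regularity} therefore gives $\sup_{Q_{1/2}}|\nabla^k U|\le C_{k,d,z}\varepsilon_0$. Scaling back yields
\[
\sup_{[\tau/2,T]\times\R^d}|\nabla^k u|\le C_{k,d,z}\,\varepsilon_0\,\rho^{-(d-2)/2-k}
\]
for every $k$. In particular $u$ is smooth on $(0,\infty)\times\R^d$, with every spatial derivative bounded uniformly on $[\tau/2,T]\times\R^d$.

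\emph{Step 2: $H^m$ bounds by induction.} I prove $\sup_{[\tau/2,T]}\|u\|_{H^m}<\infty$ by induction on $m$; the case $m=1$ is Proposition~\ref{prop:aligned-H1-regularity}. For a polynomial nonlinearity one could expand $\nabla^{m}f(u)$ directly. Since $|u|^p$ need not be smooth when $p$ is not an even integer ($d=3$ gives $p=4$, $d=4$ gives $p=2$, so here it actually is a polynomial in $u,\bar u$), I use this: $f(u)=(u\bar u)^{p/2}u$ is a polynomial. Hence $\nabla^{m}f(u)$ is a sum of products of derivatives of $u,\bar u$. In each product, one factor carrying the highest derivative is placed in $L^2$ and the others in $L^\infty$ (from Step~1), or a low-order factor is placed in $L^2$ via $u\in L^\infty_tH^1$. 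This gives $\|f(u)\|_{L^\infty([\tau/2,T];H^{m})}\lesssim 1+\|u\|_{L^\infty H^m}$, and in fact a bound using only $\|u\|_{H^1}$ together with the $L^\infty$ bounds. Writing Duhamel from $t_1=\tau/2$,
\[
u(t)=S_z(t-t_1)u(t_1)-\mu z\int_{t_1}^tS_z(t-r)f(u(r))\,dr .
\]
The smoothing estimate $\|S_z(s)g\|_{H^{m+1}}\lesssim(1+s^{-1/2})\|g\|_{H^m}$, proved by Plancherel exactly as in \eqref{eq:aligned-Hminus1-L2}, then bootstraps from $H^m$ to $H^{m+1}$ on slightly later intervals. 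Shrinking the start time a finite number of times, by $\tau/2^{m+2}$ at each stage, ends on $[\tau,T]$.

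\emph{Step 3: continuity and conclusion.} Continuity in time into $H^m$ follows from the same Duhamel representation. The map $t\mapsto S_z(t-t_1)u(t_1)$ is continuous into $H^{m}$ for $t>t_1$, and the integral term is continuous into $H^m$ because $f(u)\in L^\infty H^{m}$ and the semigroup is strongly continuous. Alternatively, interpolate a uniform $H^{m+1}$ bound with continuity in $L^2$. Finally, $H^m\hookrightarrow C^k_0$ for $m>k+d/2$ gives $u\in C((0,\infty);C^\infty\cap C_0)$. The main obstacle is Step~1, namely checking that the scaling keeps $Q_1$ inside the time range; this is handled by taking $\rho^2\le\tau/4$. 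The algebraic expansion in Step~2 is routine.
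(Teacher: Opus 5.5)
Your proposal is correct and follows the paper's proof essentially step for step. The shared steps are: parabolic rescaling plus Lemma~\ref{lem:aligned-local-regularity} to get uniform pointwise derivative bounds on a positive-time slab; the restarted Duhamel formula with the smoothing bound $\|S_z(h)g\|_{H^{\ell+1}}\lesssim h^{-1/2}\|g\|_{H^\ell}$ and staggered start times to climb from $H^1$ to $H^m$; continuity from the same Duhamel representation; and the embedding $H^m\hookrightarrow C^k\cap C_0$ for $m>k+d/2$.

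The only loose point is your parenthetical claim that $\|f(u)\|_{H^m}$ is controlled by $\|u\|_{H^1}$ and the $L^\infty$ bounds alone. Your stated placement does not justify this for terms in which every factor carries at least two derivatives (e.g.\ $\nabla^2u\,\nabla^2u\,\nabla^2\bar u$ when $p=2$, $m=6$). The induction only uses the bound by $\|u\|_{L^\infty H^m}$, so this does not affect the argument.
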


\begin{proof}
We first obtain pointwise bounds for the spatial derivatives.  Let
\(\varepsilon_0\) be the constant in
Lemma~\ref{lem:aligned-local-regularity}.

Fix $0<a<b<\infty$ and $0<\delta<b-a$.  By
Lemma~\ref{lem:aligned-uniform-local-smallness}, there exists
$0<\rho<\sqrt\delta$ such that, for every $t\in[a,b]$ and every
$x_0\in\R^d$,
\begin{equation}\label{eq:local-smallness-rho}
        \|\nabla u(t)\|_{L^2(B_\rho(x_0))}
        +\|u(t)\|_{L^{2^*}(B_\rho(x_0))}
        <\frac12\varepsilon_0.
\end{equation}
For $t_0\in[a+\delta,b]$ and $x_0\in\R^d$, define
\begin{equation}\label{eq:critical-rescale}
        U(\theta,y):=\rho^{(d-2)/2}
        u(t_0+\rho^2\theta,x_0+\rho y),
        \qquad (\theta,y)\in(-1,0)\times B_1.
\end{equation}
This scaling preserves \eqref{eq:aligned-cgl}.  Moreover,
\[
        \|\nabla_yU(\theta)\|_{L^2(B_1)}
        =\|\nabla_xu(t_0+\rho^2\theta)\|_{L^2(B_\rho(x_0))}
\]
and
\[
        \|U(\theta)\|_{L^{2^*}(B_1)}
        =\|u(t_0+\rho^2\theta)\|_{L^{2^*}(B_\rho(x_0))}.
\]
Since $\rho^2<\delta$, all times in \eqref{eq:critical-rescale} lie in
$[a,b]$.  The local regularity result therefore applies to $U$.
Scaling back gives, for every integer $k\ge0$,
\begin{equation}\label{eq:pointwise-derivative-bound}
        \sup_{\substack{a+\delta\le t\le b\\x\in\R^d}}
        |\nabla_x^ku(t,x)|
        \le C_{k,d,z}\rho^{-(d-2)/2-k}\varepsilon_0.
\end{equation}

We next prove \eqref{eq:positive-time-Hm-tau-T}.  If
$a\le\sigma\le t\le b$, the Duhamel formula may be started at time
$\sigma$:
\begin{equation}\label{eq:restarted-duhamel}
        u(t)=S_z(t-\sigma)u(\sigma)
        -\mu z\int_\sigma^tS_z(t-r)f(u(r))\,dr.
\end{equation}
For every integer $\ell\ge0$ and $0<h\le b-a$, Plancherel gives
\begin{equation}\label{eq:one-derivative-smoothing}
        \|S_z(h)g\|_{H^{\ell+1}}
        \le C_{\ell,\nu,b-a}h^{-1/2}\|g\|_{H^\ell}.
\end{equation}
Indeed,
\(
        (1+|\xi|^2)^{\ell+1}e^{-2\nu h|\xi|^2}
        \le C_{\ell,\nu,b-a}h^{-1}(1+|\xi|^2)^\ell.
\)

On any interval $J\subset[a+\delta/2,b]$, the pointwise bounds
\eqref{eq:pointwise-derivative-bound}, with $\delta/2$ in place of
$\delta$, show that all spatial derivatives of $u$ are bounded on
$J\times\R^d$.  Since $f$ is a polynomial, differentiation
and the product rule give
\begin{equation}\label{eq:nonlinearity-Hell-implication}
        u\in L^\infty(J;H^\ell)
        \quad\Longrightarrow\quad
        f(u)\in L^\infty(J;H^\ell)
\end{equation}
for every integer $\ell\ge1$.  In each term, one factor is placed in
$L^2$ and the remaining factors are placed in $L^\infty$.  Thus
\[
        \|f(u(t))\|_{H^\ell}
        \le C_{\ell,J,u}\|u(t)\|_{H^\ell},
        \qquad t\in J.
\]

Fix $m\ge1$ and set
\[
        \sigma_j:=a+\frac\delta2+\frac{j\delta}{2m},
        \qquad j=0,1,\dots,m.
\]
The case $\ell=1$ follows from the assumed $H^1$ continuity.  Suppose
that, for some $1\le\ell<m$,
\begin{equation}\label{eq:bootstrap-induction-hypothesis}
        u\in C([\sigma_{\ell-1},b];H^\ell),
        \qquad
        \sup_{\sigma_{\ell-1}\le t\le b}\|u(t)\|_{H^\ell}<\infty.
\end{equation}
Then \eqref{eq:nonlinearity-Hell-implication} gives
$f(u)\in L^\infty([\sigma_{\ell-1},b];H^\ell)$.  Applying
\eqref{eq:restarted-duhamel} with $\sigma=\sigma_{\ell-1}$ and using
\eqref{eq:one-derivative-smoothing}, we obtain, for
$t\in[\sigma_\ell,b]$,
\[
\begin{aligned}
\|u(t)\|_{H^{\ell+1}}
\le C(\sigma_\ell-\sigma_{\ell-1})^{-1/2}
        \|u(\sigma_{\ell-1})\|_{H^\ell} 
+C\int_{\sigma_{\ell-1}}^t(t-r)^{-1/2}
        \|f(u(r))\|_{H^\ell}\,dr.
\end{aligned}
\]
The right-hand side is uniformly bounded on $[\sigma_\ell,b]$.

The same formula gives continuity into $H^{\ell+1}$.  For the integral
term, split the $r$-integration into
$[\sigma_{\ell-1},t-\varepsilon]$ and its complement.  On the first
part, $S_z(t-r)$ is strongly continuous from $H^\ell$ to
$H^{\ell+1}$; on the second part, \eqref{eq:one-derivative-smoothing}
bounds the contribution by $C\int_0^\varepsilon h^{-1/2}\,dh$.  The
linear term is continuous because
$t-\sigma_{\ell-1}\ge\sigma_\ell-\sigma_{\ell-1}>0$.  Hence
\(
        u\in C([\sigma_\ell,b];H^{\ell+1}).
\)
Induction gives $u\in C([\sigma_{m-1},b];H^m)$.  Since
$\sigma_{m-1}<a+\delta$, we may restrict to $[a+\delta,b]$ and obtain
\begin{equation}\label{eq:positive-time-Hm}
        u\in C([a+\delta,b];H^m(\R^d)).
\end{equation}
Taking $a=\tau/2$, $b=T$, and $\delta=\tau/2$ proves
\eqref{eq:positive-time-Hm-tau-T}.

Finally, fix $k\ge0$ and choose an integer $m>k+d/2$. 
For completeness, if $g\in H^m$ with $m>d/2$, then
\[
        \|\widehat g\|_{L^1}
        \le\left(\int_{\R^d}(1+|\xi|^2)^{-m}\,d\xi\right)^{1/2}
        \|g\|_{H^m}.
\]
Thus $g$ is bounded and uniformly continuous, and the
Riemann--Lebesgue lemma gives $g(x)\to0$ as $|x|\to\infty$.  Applying
the same argument to $\partial^\gamma g$, $|\gamma|\le k$, proves the
embedding into $C^k\cap C_0$.  Combining this with
\eqref{eq:positive-time-Hm-tau-T} for every $k$ gives
\(
        u\in C((0,+\infty);C^\infty(\R^d)\cap C_0(\R^d)).
\)
\end{proof}

\subsection{Continuity in \texorpdfstring{$C_0$}{C0}}

Proposition~\ref{prop:aligned-positive-time-regularity} already gives
\(C_0\) regularity and continuity for positive times.  It remains to
establish continuity at the initial time.  For this purpose, we construct
a local solution in \(H^1\cap C_0\) and identify it with the given strong
solution by uniqueness in \(\dot H^1\).

\begin{proposition}
\label{prop:aligned-C0-continuity}
Let \(u\) satisfy the standing assumptions of this section.  If
\(
        u_0\in H^1(\R^d)\cap C_0(\R^d),
\)
then
\[
        u\in C([0,+\infty);C_0(\R^d)).
\]
\end{proposition}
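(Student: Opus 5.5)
By Proposition~\ref{prop:aligned-positive-time-regularity}, \(u\in C((0,\infty);C_0)\), so only continuity at \(t=0\) in \(C_0\) needs proof. The plan is to build a local solution \(\tilde u\) on a short interval \([0,T_0]\) by a fixed point in
\[
 Y_{T_0}:=C([0,T_0];H^1(\R^d)\cap C_0(\R^d))\cap L^{p_c}_{t,x}((0,T_0)\times\R^d),
\]
with norm \(\sup_t(\|w(t)\|_{H^1}+\|w(t)\|_\infty)+\|w\|_{L^{p_c}_{t,x}}\), augmented by the \(\dot H^1\) Strichartz-type norms from the homogeneous theory if needed. The map is
\[
 \Phi(w)(t)=S_z(t)u_0-\mu z\int_0^tS_z(t-r)f(w(r))\,dr .
\]
We then show that \(\tilde u\) is a strong \(\dot H^1\) solution, so that \(\dot H^1\) uniqueness gives \(\tilde u=u\) on \([0,T_0]\). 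This yields \(u\in C([0,T_0];C_0)\), which together with the positive-time result proves the proposition.

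\emph{Linear part.} The kernel of \(S_z(t)\) is a complex Gaussian \(G_t\) with \(\|G_t\|_{L^1}=C_z\) independent of \(t\), since \(\operatorname{Re}(1/z)=\nu>0\). Hence \(S_z(t)\) is bounded on \(C_0\) uniformly in \(t\), and \(S_z(t)g\to g\) in \(L^\infty\) for \(g\in C_0\): by density this holds for Schwartz \(g\) via Plancherel/\(\widehat g\in L^1\), then passes to \(C_0\) by the uniform bound. The \(H^1\) part is standard. Since \(S_z(t)u_0\to u_0\) in \(\dot H^1\), we have \(\|S_z(t)u_0\|_{L^{p_c}((0,T_0)\times\R^d)}\to0\) as \(T_0\to0\), which supplies the smallness needed for the critical \(H^1\) estimates.

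\emph{Nonlinear part.} In \(d=3,4\), \(f\) is a polynomial of degree \(p+1=5\) or \(3\), so \(\|f(w)-f(\tilde w)\|_\infty\lesssim(\|w\|_\infty^p+\|\tilde w\|_\infty^p)\|w-\tilde w\|_\infty\). The Duhamel integral is therefore controlled in \(L^\infty\) by \(C_zT_0\) times this quantity. The \(H^1\) and \(L^{p_c}\) parts are estimated as in the critical theory, with constants depending on \(\sup_t\|w\|_\infty\). The \(L^\infty\) control has no critical constraint, so the \(C_0\) bound closes on a ball of radius \(2\|u_0\|_\infty\) for small \(T_0\). The \(H^1\) part closes using the smallness of the linear \(L^{p_c}\) norm. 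Alternatively, the \(H^1\) part is handled by the subcritical estimate \(\|f(w)\|_{H^1}\lesssim\|w\|_\infty^p\|w\|_{H^1}\), which makes the whole contraction elementary: bounded \(w\) renders \(f\) Lipschitz on \(H^1\cap L^\infty\), and the contraction follows from \(T_0\) small. Continuity in time of the Duhamel term in \(C_0\) follows from strong continuity of \(S_z\) on \(C_0\) together with the uniform bound.

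\emph{Identification.} The fixed point \(\tilde u\) lies in \(C([0,T_0];H^1)\cap L^\infty_{t,x}\), hence in \(L^{p_c}_{t,x}\) on the finite interval. It satisfies the Duhamel formula, so it is a strong \(\dot H^1\) solution with the same data, and the uniqueness assumed in the standing hypotheses gives \(\tilde u=u\) on \([0,T_0]\). The main obstacle is precisely this step: we must check that \(\tilde u\) belongs to the exact uniqueness class of the cited homogeneous theories, for instance the Strichartz norm \(\nabla u\in L^{q_c}_tL^{r_c}_x\). I would verify this by the Strichartz/parabolic maximal estimates for \(S_z\), applying them to \(\tilde u\), which has bounded \(H^1\) norm and \(f(\tilde u)\in L^\infty_tH^1_x\). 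This shows \(\nabla\tilde u\in L^{q_c}_tL^{r_c}_x\) on \([0,T_0]\), which places \(\tilde u\) in the uniqueness class.
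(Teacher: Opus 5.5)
Your proposal is correct and follows essentially the same route as the paper. Both arguments build a contraction in $C([0,T_0];H^1\cap C_0)$, using the $t$-uniform $L^1$ bound on the complex Gaussian kernel and the local Lipschitz bound for $f$ on $H^1\cap L^\infty$. Both then identify the fixed point with $u$ through $\dot H^1$ uniqueness, and invoke Proposition~\ref{prop:aligned-positive-time-regularity} for positive times. Your additional check that the fixed point lies in the uniqueness class, for instance $\nabla\tilde u\in L^{q_c}_tL^{r_c}_x$ via the energy estimate of Proposition~\ref{prop:heat-linear} with forcing $f(\tilde u)\in L^1_tH^1_x$, makes explicit a point the paper only asserts.
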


\begin{proof}
We construct a local mild solution in $H^1\cap C_0$ and identify it
with $u$.  The semigroup $S_z(t)=e^{tz\Delta}$ is strongly continuous
on both $H^1(\R^d)$ and $C_0(\R^d)$.  The first statement
follows from Plancherel.  For the second, use the complex heat kernel:
its $L^1$ norm is bounded for $0<t\le1$, it maps $C_0$ into $C_0$, and
$S_z(t)g\to g$ in $C_0$ as $t\downarrow0$.

Set
\[
        X_T:=C([0,T];H^1(\R^d)\cap C_0(\R^d)),
\]
with norm
\[
        \|v\|_{X_T}:=\sup_{0\le t\le T}
        \bigl(\|v(t)\|_{H^1}+\|v(t)\|_{L^\infty}\bigr).
\]
The map $f$ is locally Lipschitz from $H^1\cap C_0$ to
itself.  More precisely, if
\[
\|v\|_{H^1}+\|v\|_{L^\infty}
+\|w\|_{H^1}+\|w\|_{L^\infty}\le M,
\]
then
\begin{equation}\label{eq:C0-H1-local-lipschitz}
        \|f(v)-f(w)\|_{C_0}
        +\|f(v)-f(w)\|_{H^1}
        \le C_{d,M}\bigl(\|v-w\|_{C_0}+\|v-w\|_{H^1}\bigr).
\end{equation}
The $C_0$ estimate follows from
\eqref{eq:pointwise-f}.  For the $H^1$ estimate, write
$f(v)$ as a polynomial in $v$ and $\overline v$.  After differentiation,
one factor is placed in $L^2$ and all remaining factors are placed in
$L^\infty$.

Consider
\[
        \Phi(v)(t):=S_z(t)u_0
        -\mu z\int_0^tS_z(t-r)f(v(r))\,dr.
\]
The semigroup bounds and \eqref{eq:C0-H1-local-lipschitz} show that,
for $T>0$ small enough, $\Phi$ is a contraction on a closed ball in
$X_T$.  Hence there is a unique local mild solution
\begin{equation}\label{eq:local-H1-C0-mild-solution}
        v\in C([0,T_0];H^1(\R^d)\cap C_0(\R^d)).
\end{equation}
In particular,
\begin{equation}\label{eq:v-C0-initial-continuity}
        v(t)\to u_0\quad\text{in }C_0(\R^d)
        \quad\text{as }t\downarrow0.
\end{equation}

Since $v\in C([0,T_0];H^1)$, it is also a strong $\dot H^1$ Duhamel
solution with initial data $u_0$.  Uniqueness in the strong
\(\dot H^1\) theory assumed above
gives $v=u$ on
$[0,T_0]$.  Therefore \eqref{eq:v-C0-initial-continuity} holds with
$u$ in place of $v$.  Proposition~\ref{prop:aligned-positive-time-regularity}
gives continuity in $C_0$ for every positive time, and the conclusion
follows.
\end{proof}

\begin{proof}[Proof of Theorem~\ref{thm:intro-A}\emph{(a)}]
Huang and Wang~\cite{HuangWang2008} provide the global strong
\(\dot H^1\) solution together with the approximation, stability, and
uniqueness properties used above; see also
\cite{ChengGuoZheng2025Limit}.  Propositions
\ref{prop:aligned-H1-regularity} and
\ref{prop:aligned-positive-time-regularity} give the first assertion,
and Proposition~\ref{prop:aligned-C0-continuity} gives the additional
\(C_0\) continuity when \(u_0\in C_0\).

To identify the solution in Cazenave's formulation, write
\(z=e^{i\theta}\), \(0\le\theta<\pi/2\).  Then \eqref{eq:aligned-cgl}
with \(\mu=+1\) becomes
\[
 e^{-i\theta}u_t=\Delta u-|u|^{4/(d-2)}u.
\]
The local \(H^1\cap C_0\) solution constructed in the proof of
Proposition~\ref{prop:aligned-C0-continuity} agrees with the global
strong solution on their common lifespan.  If its maximal time were
finite, continuity of the global solution in \(H^1\cap C_0\) and the
same local theory would extend it beyond that time.  Hence the maximal
\(H^1\cap C_0\) solution is global.
\end{proof}

\begin{proof}[Proof of Theorem~\ref{thm:intro-A}\emph{(b)}]
Under the ground-state threshold in Theorem~\ref{thm:intro-A}\emph{(b)},
Cheng, Guo, and Zheng~\cite{ChengGuoZheng2026JMPA} construct the required
global strong \(\dot H^1\) solution and establish the approximation,
stability, and uniqueness properties used above.  Propositions
\ref{prop:aligned-H1-regularity} and
\ref{prop:aligned-positive-time-regularity} give the first assertion,
while Proposition~\ref{prop:aligned-C0-continuity} gives the conclusion
under the additional assumption \(u_0\in C_0\).
\end{proof}
 \section{The zero-dispersion limit}\label{sec:heat}

The singular-limit analysis starts with the zero-dispersion limit.  In this situation the real part of the coefficient supplies linear smoothing uniformly for nearby complex coefficients.  We first prove the corresponding uniform linear and nonlinear stability estimates, and then apply them to the limiting heat solution.

\subsection{Uniform linear estimate for the zero-dispersion limit}

The stability argument below will be applied to diffusion coefficients in \eqref{eq:intro-normalized-family}
that vary with \(\eps\).  We therefore need a linear estimate whose
constant depends only on a positive lower bound for the real part of the
coefficients. This
gives the following uniform estimate.

\begin{proposition}\label{prop:heat-linear}
Let $I=[t_0,t_1]$, $\operatorname{Re}A\ge a_0>0$, and $0\le r\le r_0$.
If
\[
 \partial_tz-A\Delta z+rz=F,
 \qquad z(t_0)=z_0\in H^1(\R^d),
\]
then
\begin{equation}\label{eq:heat-linear}
 \|z\|_{\cX^1(I)}
 \le C(d,a_0)
 \bigl(\|z_0\|_{H^1}+\|F\|_{\cN^1(I)}\bigr).
\end{equation}
The constant is independent of $A,r,I,z_0,F$.
\end{proposition}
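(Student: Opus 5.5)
Since $\nabla$ commutes with $\partial_t - A\Delta + r$, it suffices to prove the estimate for $z$ and $\nabla z$ separately at the $L^2$ level. That is, we aim for
\[
 \|w\|_{L_t^\infty L_x^2(I)}+\|w\|_{L_t^2L_x^{2^*}(I)}
 \le C(d,a_0)\bigl(\|w(t_0)\|_2+\|G\|_{L_t^1L_x^2(I)+L_t^2L_x^{(2^*)'}(I)}\bigr)
\]
for solutions of $\partial_tw-A\Delta w+rw=G$, applied with $w=z$ and $w=\nabla z$. The remaining two components of $\cX^1$ then follow without further work. Interpolation \eqref{eq:aux-interp} applied to $\nabla z$ bounds $\|\nabla z\|_{L_t^{q_c}L_x^{r_c}}$. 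The Sobolev bound \eqref{eq:sob-pc} then controls $\|z\|_{L^{p_c}_{t,x}}$. The $L^2$ and gradient pieces of $\cX^1$ and of $\cN^1$ are exactly these two applications.

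The $L^2$ estimate comes from an energy identity. For smooth, decaying data (general data by density, since the solution map is linear), pair the equation with $\overline w$ and take real parts:
\[
 \tfrac12\tfrac{d}{dt}\|w\|_2^2+\operatorname{Re}A\,\|\nabla w\|_2^2+r\|w\|_2^2=\operatorname{Re}\langle G,w\rangle .
\]
We drop the term $r\|w\|_2^2\ge0$; this is why the constant does not depend on $r$ or $r_0$. Write $G=G_1+G_2$ with $G_1\in L_t^1L_x^2$ and $G_2\in L_t^2L_x^{(2^*)'}$. Integrate over $[t_0,t]$ and take the supremum in $t$. The $G_1$ contribution is at most $\|G_1\|_{L^1L^2}\|w\|_{L^\infty L^2}$. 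The $G_2$ contribution is at most $\|G_2\|_{L^2L^{(2^*)'}}\|w\|_{L^2L^{2^*}}\le C_d\|G_2\|_{L^2L^{(2^*)'}}\|\nabla w\|_{L^2_{t,x}}$ by Sobolev.

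Young's inequality then absorbs $\frac14\|w\|_{L^\infty L^2}^2+\frac{a_0}{2}\|\nabla w\|_{L^2L^2}^2$ into the left side, using $\operatorname{Re}A\ge a_0$. The result is
\[
 \|w\|_{L^\infty L^2}^2+a_0\|\nabla w\|_{L^2L^2}^2\lesssim \|w(t_0)\|_2^2+\|G_1\|_{L^1L^2}^2+a_0^{-1}\|G_2\|_{L^2L^{(2^*)'}}^2 .
\]
Sobolev converts $\|\nabla w\|_{L^2_{t,x}}$ into $\|w\|_{L^2L^{2^*}}$. Taking the infimum over decompositions gives the displayed estimate with $C$ depending only on $d$ and $a_0$. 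Nothing depends on $|I|$ or $\operatorname{Im}A$.

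Applied to $w=\nabla z$, this estimate also controls $\|\nabla^2 z\|_{L^2_{t,x}}$, which yields $\|\nabla z\|_{L^2L^{2^*}}$. Combined with the case $w=z$, this gives the full $L_t^2W_x^{1,2^*}$ norm.

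The main obstacle is justifying the energy identity for $H^1$ data and $\cN^1$ forcing. The plan is to regularize: mollify $z_0$ and $F$ in space and truncate them in frequency. For such data, Duhamel's formula with the semigroup $e^{t(A\Delta-r)}$, a Fourier multiplier of modulus $\le1$, gives a smooth solution in all $H^s$, where the identity is rigorous. The a priori bound then shows the approximations are Cauchy in $\cX^1$, and the limit is the mild solution.

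One point needs care: the duality pairing $\langle G_2,w\rangle$ must be handled through $L^{(2^*)'}$–$L^{2^*}$ duality, not through $L^2$.
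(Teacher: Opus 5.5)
Your proposal is correct and follows essentially the same route as the paper. Both arguments use the $L^2$ energy identity for $z$ and for $\nabla z$, drop the nonnegative $r$-term, and pair the $L^1_tL^2_x$ part of the forcing against $\|w\|_{L^\infty_tL^2_x}$ and the $L^2_tL^{(2^*)'}_x$ part, via Sobolev, against $\|\nabla w\|_{L^2_{t,x}}$. Both then absorb with Young's inequality and recover the remaining components of $\cX^1$ from \eqref{eq:aux-interp} and \eqref{eq:sob-pc}. Your regularization-and-density justification of the energy identity is sound and makes rigorous a step the paper leaves implicit.
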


\begin{proof}
We first take $r=0$ and write $F=F_1+F_2$ according to the two summands
of $\cN^1(I)$.  Apply $D^k$,
$k=0,1$, take the real $L^2$ scalar product with $D^kz$, and integrate by
parts. The imaginary part of $A$ makes no contribution to the energy and the
lower bound on its real part gives
\[
 \frac12\frac d{dt}\|D^kz\|_2^2
 +a_0\|\nabla D^kz\|_2^2
 \le |\langle D^kF_1,D^kz\rangle|
 +|\langle D^kF_2,D^kz\rangle|.
\]
Integrating the $F_1$ term in time gives
\[
 \int_{t_0}^t|\langle D^kF_1,D^kz\rangle|\,d\tau
 \le \|D^kF_1\|_{L_t^1L_x^2([t_0,t])}
 \|D^kz\|_{L_t^\infty L_x^2([t_0,t])}.
\]
For $F_2$, Sobolev inequality and Young's inequality give
\[
 |\langle D^kF_2,D^kz\rangle|
 \lesssim_d \|D^kF_2\|_{L_x^{(2^*)'}}
 \|\nabla D^kz\|_2
 \le \frac{a_0}{2}\|\nabla D^kz\|_2^2
 +C(d,a_0)\|D^kF_2\|_{L_x^{(2^*)'}}^2.
\]
After integration in time, the elementary inequality
$XY\le X^2/4+Y^2$ absorbs the factor containing the time supremum.
Consequently,
\begin{align*}
 \|D^kz\|_{L_t^\infty L_x^2(I)}^2
 +a_0\|\nabla D^kz\|_{L_{t,x}^2(I)}^2
 \le C(d,a_0)\bigl(
 \|D^kz_0\|_2^2+
 \|D^kF_1\|_{L_t^1L_x^2(I)}^2+
 \|D^kF_2\|_{L_t^2L_x^{(2^*)'}(I)}^2\bigr).
\end{align*}
The Sobolev inequality yields
\begin{equation}\label{eq:heat-energy}
 \|D^kz\|_{L_t^\infty L_x^2}
 +\|D^kz\|_{L_t^2L_x^{2^*}}
 \le C(d,a_0)\bigl(
 \|D^kz_0\|_2+\|D^kF_1\|_{L_t^1L_x^2}
 +\|D^kF_2\|_{L_t^2L_x^{(2^*)'}}\bigr).
\end{equation}
Using \eqref{eq:aux-interp} with $w=\nabla z$ gives the
$L_t^{q_c}L_x^{r_c}$ gradient bound.  Equation \eqref{eq:sob-pc} then
gives the $L_{t,x}^{p_c}$ bound.  Summing the estimates for $k=0,1$ and
taking the infimum over all decompositions of $F$ proves
\eqref{eq:heat-linear} when $r=0$.

For general $r\ge0$, the same energy argument applies directly.  Indeed,
the term $rz$ contributes
\(
 r\|D^kz\|_2^2
\)
to the left-hand side of the energy identity and is therefore
nonnegative.  Dropping this term gives exactly the estimate obtained
when $r=0$.  This proves the proposition.
\end{proof}

\subsection{Nonlinear estimate and stability}

The linear estimate reduces the nonlinear part of the perturbation
argument to controlling \(f(u)-f(v)\) in
\(L_t^2W_x^{1,(2^*)'}\).  The norms in \(\cX^1\) are matched to this
forcing space by the identities
\begin{equation}\label{eq:critical-identities}
 \frac12=\frac p{p_c}+\frac1{q_c},
 \qquad
 \frac1{(2^*)'}=\frac p{p_c}+\frac1{r_c}
\end{equation}
which, together with
\eqref{eq:pointwise-f}--\eqref{eq:pointwise-grad-f}, give the following
estimate.

\begin{lemma}\label{lem:nonlinear-lip}
For every finite interval $J$ and $u,v\in\cX^1(J)$,
\begin{equation}\label{eq:nonlinear-lip}
 \|f(u)-f(v)\|_{L_t^2W_x^{1,(2^*)'}(J)}
 \le C_d\bigl(\|u\|_{\cZ^1(J)}^p+
 \|v\|_{\cZ^1(J)}^p\bigr)\|u-v\|_{\cX^1(J)}.
\end{equation}
In particular,
\begin{equation}\label{eq:nonlinear-single}
 \|f(u)\|_{L_t^2W_x^{1,(2^*)'}(J)}
 \le C_d\|u\|_{\cZ^1(J)}^p\|u\|_{\cX^1(J)}.
\end{equation}
\end{lemma}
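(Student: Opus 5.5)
We estimate the two pieces of the $W^{1,(2^*)'}_x$ norm separately: the undifferentiated part $\|f(u)-f(v)\|_{L_t^2L_x^{(2^*)'}}$ and the gradient part $\|\nabla(f(u)-f(v))\|_{L_t^2L_x^{(2^*)'}}$. In each case we apply the pointwise bounds \eqref{eq:pointwise-f}--\eqref{eq:pointwise-grad-f} and then H\"older's inequality in space and time, with exponents dictated by the identities \eqref{eq:critical-identities}. The bound \eqref{eq:nonlinear-single} is the special case $v=0$, since $f(0)=0$.

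\emph{Gradient part, first term.} For $(|u|^p+|v|^p)|\nabla(u-v)|$ we use the second identity in space and the first in time:
\[
 \|(|u|^p+|v|^p)\nabla(u-v)\|_{L_t^2L_x^{(2^*)'}}
 \le \bigl(\|u\|_{L^{p_c}_{t,x}}^p+\|v\|_{L^{p_c}_{t,x}}^p\bigr)\|\nabla(u-v)\|_{L_t^{q_c}L_x^{r_c}}.
\]
The last factor is bounded by $\|u-v\|_{\cX^1}$, because $\cX^1$ contains the $L^{q_c}_tL^{r_c}_x$ gradient norm.

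\emph{Gradient part, second term.} For $(|u|^{p-1}+|v|^{p-1})(|\nabla u|+|\nabla v|)|u-v|$ we put $|u|^{p-1}$ and $|u-v|$ in $L^{p_c}_{t,x}$ and $\nabla u$ in $L_t^{q_c}L_x^{r_c}$. The exponents add up to the same totals as in \eqref{eq:critical-identities}, namely $(p-1)/p_c+1/p_c+1/q_c=1/2$ in time and $(p-1)/p_c+1/p_c+1/r_c=1/(2^*)'$ in space. This yields
\[
 \bigl(\|u\|_{L^{p_c}}^{p-1}+\|v\|_{L^{p_c}}^{p-1}\bigr)\bigl(\|\nabla u\|_{L^{q_c}L^{r_c}}+\|\nabla v\|_{L^{q_c}L^{r_c}}\bigr)\|u-v\|_{L^{p_c}_{t,x}}.
\]
The first two factors are bounded by $\|u\|_{\cZ^1}^p+\|v\|_{\cZ^1}^p$, using $x^{p-1}y\le x^p+y^p$ for the cross terms, and $\|u-v\|_{L^{p_c}_{t,x}}\le\|u-v\|_{\cX^1}$. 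When $p=1$ the factor with exponent $p-1$ is $1$ and the same computation applies.

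\emph{Undifferentiated part.} This is the step that does not follow from \eqref{eq:critical-identities} directly, since it needs an $L^{r_c}$-type bound on $u-v$ itself, and $\cX^1$ controls only the gradient in that norm. We apply H\"older as before:
\[
 \|(|u|^p+|v|^p)(u-v)\|_{L_t^2L_x^{(2^*)'}}\le\bigl(\|u\|_{L^{p_c}}^p+\|v\|_{L^{p_c}}^p\bigr)\|u-v\|_{L_t^{q_c}L_x^{r_c}}.
 \]
We then invoke the interpolation inequality \eqref{eq:aux-interp} with $w=u-v$. It gives $\|u-v\|_{L_t^{q_c}L_x^{r_c}}\le\|u-v\|_{L_t^\infty L^2_x}^{1-\theta}\|u-v\|_{L_t^2L_x^{2^*}}^{\theta}\le\|u-v\|_{\cX^1}$, because $\cX^1$ contains both the $L_t^\infty H^1_x$ and the $L_t^2W^{1,2^*}_x$ norms.

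Summing the three bounds gives \eqref{eq:nonlinear-lip}.
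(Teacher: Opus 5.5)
Your proposal is correct and follows the paper's proof essentially step for step. Like the paper, you use H\"older with the critical identities for the $|\nabla(u-v)|$ term, split $p/p_c=(p-1)/p_c+1/p_c$ for the cross term and close it with Young's inequality, bound $\|u-v\|_{L_t^{q_c}L_x^{r_c}}$ in the undifferentiated term by \eqref{eq:aux-interp}, and set $v=0$ for the single estimate.
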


\begin{proof}
Put $h=u-v$.  H\"older's inequality and
\eqref{eq:critical-identities} give
\[
 \|(|u|^p+|v|^p)h\|_{L_t^2L_x^{(2^*)'}}
 \lesssim
 (\|u\|_{L^{p_c}_{t,x}}^p+\|v\|_{L^{p_c}_{t,x}}^p)
 \|h\|_{L_t^{q_c}L_x^{r_c}},
\]
and the same estimate with $\nabla h$ in place of $h$.  For the cross
term in \eqref{eq:pointwise-grad-f}, split $p/p_c$ as
$(p-1)/p_c+1/p_c$ to obtain
\[
 \||a|^{p-1}|\nabla b||h|\|_{L_t^2L_x^{(2^*)'}}
 \le \|a\|_{L_{t,x}^{p_c}}^{p-1}
 \|\nabla b\|_{L_t^{q_c}L_x^{r_c}}
 \|h\|_{L_{t,x}^{p_c}},
\]
where $a, b\in \{u,v\}$.
When $p=1$, the first factor on the right is omitted.
Equation \eqref{eq:aux-interp} controls the $\|h\|_{L_t^{q_c}L_x^{r_c}}$, and Young's inequality absorbs each product
$\|a\|^{p-1}\|\nabla b\|$ into
$\|a\|_{\cZ^1}^p+\|b\|_{\cZ^1}^p$.
More explicitly, the four choices $(a,b)=(u,u),(u,v),(v,u),(v,v)$
arising from \eqref{eq:pointwise-grad-f} are bounded by
\[
 C_d\bigl(\|u\|_{\cZ^1(J)}^p+
 \|v\|_{\cZ^1(J)}^p\bigr)\|h\|_{\cX^1(J)}.
\]
Taking $v=0$ gives \eqref{eq:nonlinear-single}.
\end{proof}

On an interval where the approximate solution is small in \(\cZ^1\),
Lemma~\ref{lem:nonlinear-lip} allows the nonlinear term in the estimate
of Proposition~\ref{prop:heat-linear} to be absorbed.  A finite interval
can then be divided into finitely many such subintervals, and the error
is propagated from one subinterval to the next.  This yields the
stability statement needed for the zero-dispersion limit.  Its constants are
uniform for the coefficients in the ranges specified below.

\begin{proposition}[Zero-dispersion stability]\label{prop:heat-stability}
Fix $a_0>0$, $C_0\ge1$, $r_0\ge0$, $M>0$, and a finite interval $I$.
Suppose
\(
 \operatorname{Re}A\ge a_0, ~|C|\le C_0,~ 0\le r\le r_0,
\)
and $U\in C(I;H^1)\cap\cX^1(I)$ satisfies
\[
 \|U\|_{\cX^1(I)}\le M,
 \qquad \partial_tU-A\Delta U+\mu C f(U)+rU=e.
\]
There are constants
\[
 \varepsilon_*=\varepsilon_*(d,a_0,C_0,r_0,|I|,M)>0,
 \quad K_*=K_*(d,a_0,C_0,r_0,|I|,M)
\]
such that, whenever
\[
 \|v_0-U(t_0)\|_{H^1}+\|e\|_{\cN^1(I)}\le\varepsilon_*,
\]
the exact equation with initial value $v_0$ has a unique solution
$v\in C(I;H^1)\cap\cX^1(I)$ and
\begin{equation}\label{eq:heat-stability}
 \|v-U\|_{\cX^1(I)}
 \le K_*\bigl(\|v_0-U(t_0)\|_{H^1}+\|e\|_{\cN^1(I)}\bigr).
\end{equation}
\end{proposition}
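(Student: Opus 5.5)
The plan is the standard critical-stability scheme, organized so that every constant depends only on \(d,a_0,C_0,r_0,|I|,M\). Write \(w=v-U\). Then \(w\) solves
\[
 \partial_tw-A\Delta w+rw=-\mu C\bigl(f(U+w)-f(U)\bigr)-e,
 \qquad w(t_0)=v_0-U(t_0).
\]
We first subdivide \(I\). Let \(\eta>0\) be small, to be fixed depending only on \(d,a_0,C_0\). Since \(\|U\|_{\cZ^1(I)}\le \|U\|_{\cX^1(I)}\le M\) and the \(\cZ^1\) norms are \(L^{q_c}\) and \(L^{p_c}\) in time with finite exponents, the quantity \(\|U\|_{\cZ^1(J)}^{p_c}\) is, up to a dimensional constant, superadditive over disjoint subintervals. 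Hence \(I\) splits into \(J_1,\dots,J_K\), \(J_k=[\tau_{k-1},\tau_k]\), with \(\|U\|_{\cZ^1(J_k)}\le\eta\) and \(K\le C_d(1+M/\eta)^{p_c}\). Thus \(K\) depends only on \(d,a_0,C_0,M\).

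On a single interval \(J_k\), we solve for \(w\) by a contraction in the ball \(\{\|w\|_{\cX^1(J_k)}\le\delta\}\) of \(C(J_k;H^1)\cap\cX^1(J_k)\). The map is
\[
 w\mapsto \text{Duhamel}\bigl[\,w(\tau_{k-1});\,-\mu C(f(U+w)-f(U))-e\,\bigr].
\]
By Proposition~\ref{prop:heat-linear} (constant \(C_1=C(d,a_0)\), uniform in \(A,r\)) and Lemma~\ref{lem:nonlinear-lip},
\[
 \|w\|_{\cX^1(J_k)}\le C_1\Bigl(\|w(\tau_{k-1})\|_{H^1}+\|e\|_{\cN^1(J_k)}
 +C_0C_d\bigl(\|U\|_{\cZ^1(J_k)}^p+\|U+w\|_{\cZ^1(J_k)}^p\bigr)\|w\|_{\cX^1(J_k)}\Bigr).
\]
With \(\|U+w\|_{\cZ^1}\le\eta+C_d\delta\), choose \(\eta\) and \(\delta\) so that \(C_1C_0C_d\bigl(\eta^p+(\eta+C_d\delta)^p\bigr)\le\tfrac12\). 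The same lemma gives the difference estimate for two candidates \(w,\tilde w\), so the map is a contraction. This yields a unique solution on \(J_k\) with
\[
 \|w\|_{\cX^1(J_k)}\le 2C_1\bigl(\|w(\tau_{k-1})\|_{H^1}+\|e\|_{\cN^1(J_k)}\bigr),
\]
provided the right-hand side is at most \(\delta\). Continuity \(w\in C(J_k;H^1)\) follows from the Duhamel formula and strong continuity of \(e^{t(A\Delta-r)}\) on \(H^1\) together with the \(\cN^1\) estimates.

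We then iterate. Since \(\|w(\tau_k)\|_{H^1}\le\|w\|_{\cX^1(J_k)}\) and \(\|e\|_{\cN^1(J_k)}\le\|e\|_{\cN^1(I)}\), with restriction handled separately on each summand as noted in Section~\ref{sec:prelim}, induction gives
\[
 \|w\|_{\cX^1(J_k)}\le(4C_1)^k\bigl(\|w(t_0)\|_{H^1}+\|e\|_{\cN^1(I)}\bigr).
\]
Taking \(\varepsilon_*=\delta(4C_1)^{-K}\) keeps every step inside the smallness regime. Summing over \(k\le K\) gives \eqref{eq:heat-stability} with \(K_*=K(4C_1)^K\); the \(L^\infty_tH^1_x\) part uses the maximum over the pieces and the other components use the sum over pieces. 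The remaining dependence on \(r_0\) and \(|I|\) is harmless, and in fact unnecessary here.

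Uniqueness in \(C(I;H^1)\cap\cX^1(I)\) is the usual argument. Any other solution \(\tilde v\) has \(\|\tilde v\|_{\cZ^1(J)}\) small on short intervals by absolute continuity of the time integrals. On such intervals, Proposition~\ref{prop:heat-linear} and Lemma~\ref{lem:nonlinear-lip} applied to \(\tilde v-v\) give \(\tilde v=v\), and a continuity argument propagates this across \(I\).

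The step requiring the most care is the bookkeeping that keeps the subdivision count \(K\) and the smallness \(\eta\) independent of \(A\), \(C\) and \(r\). In particular, smallness must be imposed only on \(U\), which is fixed, and not on \(v\). The term \(\|U+w\|_{\cZ^1}^p\) is then controlled through \(\delta\), which avoids any circularity.
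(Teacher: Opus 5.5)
Your proposal is correct and follows essentially the same route as the paper: split $I$ into boundedly many pieces on which $\|U\|_{\cZ^1}$ is small, run a contraction for $w=v-U$ on each piece using Proposition~\ref{prop:heat-linear} and Lemma~\ref{lem:nonlinear-lip}, propagate the error with a factor exponential in the number of pieces, and handle uniqueness by the same subdivision. The differences are only bookkeeping: you use a fixed-radius ball and the crude bound $\|e\|_{\cN^1(J_k)}\le\|e\|_{\cN^1(I)}$, whereas the paper uses radius $2KE_J$ and sums the $b_j$ by Cauchy--Schwarz; this changes the constants but not the argument.
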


\begin{proof}
Let $K=C(d,a_0)$ be the constant in
Proposition~\ref{prop:heat-linear}.  Since every subinterval $J\subset I$
has length at most $|I|$, the same value of $K$ is valid on each such
$J$.  Choose $\delta>0$ so small that
\begin{equation}\label{eq:heat-stability-delta}
 KC_0C_d\bigl(\delta^p+(2\delta)^p\bigr)\le\frac14,
 \qquad
 2KC_0C_d(2\delta)^p\le\frac12.
\end{equation}
This choice depends only on the parameters displayed in the statement.

Consider an interval $J=[\tau_0,\tau_1]\subset I$ such that
$\|U\|_{\cZ^1(J)}\le\delta$.  Writing $w=v-U$, we obtain
\begin{equation}\label{eq:heat-stability-difference}
 \partial_tw-A\Delta w+rw
 =-\mu C\{f(U+w)-f(U)\}-e.
\end{equation}
The value $w(\tau_0)$ is regarded below as fixed
initial data.  With $S_{A,r}(t)=e^{-rt}e^{tA\Delta}$, define
\begin{align*}
 \Phi_J(w)(t)={}&S_{A,r}(t-\tau_0)w(\tau_0)
 -\int_{\tau_0}^tS_{A,r}(t-s)e(s)\,ds\\
 &-\mu C\int_{\tau_0}^tS_{A,r}(t-s)
 \{f(U+w)-f(U)\}(s)\,ds.
\end{align*}
By Lemma~\ref{lem:nonlinear-lip}, the last integrand belongs to
$L_t^2W_x^{1,(2^*)'}(J)$ whenever $w\in\cX^1(J)$; hence it belongs to the
second summand of $\cN^1(J)$, and Proposition~\ref{prop:heat-linear}
applies to every term in the definition of $\Phi_J$.

Set
\[
 E_J=\|w(\tau_0)\|_{H^1}+\|e\|_{\cN^1(J)},
 \qquad R_J=2KE_J,
\]
and assume that $R_J\le\delta$.  The closed ball
\(
 B_J=\{w\in\cX^1(J):\ \|w\|_{\cX^1(J)}\le R_J\}
\)
is complete in the metric induced by $\cX^1(J)$.  Since the
$\cZ^1(J)$ components are among the components controlled by
$\cX^1(J)$, every $w\in B_J$ satisfies
\[
 \|U+w\|_{\cZ^1(J)}
 \le \|U\|_{\cZ^1(J)}+\|w\|_{\cZ^1(J)}
 \le\delta+R_J\le2\delta.
\]
Applying Proposition~\ref{prop:heat-linear} to \eqref{eq:heat-stability-difference}
and then Lemma~\ref{lem:nonlinear-lip}, we obtain
\begin{align*}
 \|\Phi_J(w)\|_{\cX^1(J)}
 &\le K\bigl(\|w(\tau_0)\|_{H^1}+\|e\|_{\cN^1(J)}\bigr)
 +KC_0\|f(U+w)-f(U)\|_{L_t^2W_x^{1,(2^*)'}(J)}\\
 &\le KE_J+KC_0C_d
 \bigl(\|U+w\|_{\cZ^1(J)}^p+\|U\|_{\cZ^1(J)}^p\bigr)
 \|w\|_{\cX^1(J)}\\
 &\le KE_J+KC_0C_d\bigl((2\delta)^p+\delta^p\bigr)R_J\\
 &\le \frac12R_J+\frac14R_J<R_J,
\end{align*}
where we used $R_J=2KE_J$ and \eqref{eq:heat-stability-delta} in the
last line.  Thus $\Phi_J$ maps $B_J$ into itself.

For $w_1,w_2\in B_J$, the linear terms cancel and another application
of Lemma~\ref{lem:nonlinear-lip} gives
\begin{align*}
 \|\Phi_J(w_1)-\Phi_J(w_2)\|_{\cX^1(J)}
 &\le KC_0\|f(U+w_1)-f(U+w_2)\|_{L_t^2W_x^{1,(2^*)'}(J)}\\
 &\le KC_0C_d\bigl(
 \|U+w_1\|_{\cZ^1(J)}^p+
 \|U+w_2\|_{\cZ^1(J)}^p\bigr)
 \|w_1-w_2\|_{\cX^1(J)}\\
 &\le 2KC_0C_d(2\delta)^p
 \|w_1-w_2\|_{\cX^1(J)}\\
 &\le\frac12\|w_1-w_2\|_{\cX^1(J)}.
\end{align*}
Banach's fixed-point theorem therefore gives a unique fixed point in
$B_J$. It obeys
\begin{equation}\label{eq:local-stability}
 \|w\|_{\cX^1(J)}\le R_J=2KE_J.
\end{equation}

To iterate the local estimate, use the absolute continuity of the two
integrals defining \(\cZ^1(I)\) to partition \(I\) into \(N\)
consecutive intervals \(J_j=[t_j,t_{j+1}]\) such that
\(\|U\|_{\cZ^1(J_j)}\le\delta\).  The usual stopping-time construction
gives
\(
 N\le N_0:=\left\lceil
 1+(2M/\delta)^{p_c}+(2M/\delta)^{q_c}
 \right\rceil,
\)
where \(N_0\) depends only on \(d,M,\delta\).

Set \(b_j=\|e\|_{\cN^1(J_j)}\).  Restricting a near-optimal
decomposition in the Banach-sum definition of \(\cN^1(I)\), and using
Cauchy--Schwarz for its \(L_t^2\) component, gives
\[
 \sum_{j=0}^{N-1}b_j\le\sqrt{N_0}\,\|e\|_{\cN^1(I)}.
\]
Put \(d_j=\|w(t_j)\|_{H^1}\) and \(L=\max\{2K,1\}\).  As long as the
local smallness condition holds, \eqref{eq:local-stability} gives
\[
 d_{j+1}\le L(d_j+b_j),\qquad
 d_j\le L^{N_0}\left(d_0+\sum_{\ell=0}^{N-1}b_\ell\right).
\]
Hence \(d_j+b_j\le2L^{N_0}\sqrt{N_0}
(d_0+\|e\|_{\cN^1(I)})\).  Taking
\(
 \varepsilon_*\le\delta / 4KL^{N_0}\sqrt{N_0}
\)
ensures \(2K(d_j+b_j)\le\delta\) at every step, and the local
construction therefore iterates across \(I\).

Summing \eqref{eq:local-stability} over the partition now gives
\[
 \|w\|_{\cX^1(I)}
 \le2K\sqrt{N_0}\bigl(N_0L^{N_0}+1\bigr)
 \bigl(d_0+\|e\|_{\cN^1(I)}\bigr).
\]
This proves \eqref{eq:heat-stability}, with \(K_*\) depending only on
\(d,a_0,C_0,r_0,|I|,M\).  Applying the same subdivision to the
difference of two solutions proves uniqueness in
\(C(I;H^1)\cap\cX^1(I)\).
\end{proof}

\subsection{Frequency truncation and residual estimates}
\label{subsec:heat-H1-limit}

The low-frequency cutoff fixed in Section~\ref{sec:prelim} will be used
at both singular endpoints.  All constants below may depend on
\(\chi\), but not on \(N\).

\begin{lemma}
\label{lem:frequency-truncation}
Let \(I\) be a finite interval and let
\(
 U\in C(I;H^1)\cap\cX^1(I).
\)
Set \(U_N=P_{\le N}U\).  Then
\begin{equation}\label{eq:PN-uniform-X}
 \sup_{N\ge1}\|U_N\|_{\cX^1(I)}
 \le C_\chi\|U\|_{\cX^1(I)},
\end{equation}
and
\begin{equation}\label{eq:PN-strong-X}
 \|U_N-U\|_{\cX^1(I)}\longrightarrow0
 \qquad\text{as }N\to\infty.
\end{equation}
Moreover, if
\begin{equation}\label{eq:rhoN-definition}
 \rho_N(U;I)
 :=
 \|P_{\le N}f(U)-f(P_{\le N}U)\|_
 {L_t^2W_x^{1,(2^*)'}(I)},
\end{equation}
then
\begin{equation}\label{eq:rhoN-zero}
 \rho_N(U;I)\longrightarrow0
 \qquad\text{as }N\to\infty.
\end{equation}
\end{lemma}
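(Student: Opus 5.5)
For the uniform bound \eqref{eq:PN-uniform-X}, the plan is to use only that \(P_{\le N}\) is convolution with \(N^d\check\chi(N\cdot)\). This kernel has \(L^1\) norm \(\|\check\chi\|_{L^1}=:C_\chi\), independent of \(N\). Young's inequality then gives \(\|P_{\le N}g\|_{L^s}\le C_\chi\|g\|_{L^s}\) for every \(1\le s\le\infty\). Since \(P_{\le N}\) commutes with \(\nabla\) and acts only in \(x\), applying this pointwise in time and then taking the time norm handles each of the four components of \(\cX^1(I)\): \(L_t^\infty H_x^1\), \(L_t^2W_x^{1,2^*}\), \(\nabla\cdot\) in \(L_t^{q_c}L_x^{r_c}\), and \(L_{t,x}^{p_c}\).

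For \eqref{eq:PN-strong-X}, I would argue by dominated convergence in time. For a.e. fixed \(t\), \(P_{\le N}g\to g\) in \(L^s\) for \(1\le s<\infty\) (approximate identity). All exponents in the last three components are finite, including the time exponents \(2,q_c,p_c\). Hence, for example, \(\|(P_{\le N}-1)\nabla U(t)\|_{r_c}\to0\) a.e. in \(t\), with the bound \((C_\chi+1)\|\nabla U(t)\|_{r_c}\), which lies in \(L_t^{q_c}\). Dominated convergence then gives convergence of the time norm, and likewise for the other two components.

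The \(L_t^\infty H_x^1\) component is the one place where dominated convergence fails. Here I would use \(U\in C(I;H^1)\) with \(I\) compact, so \(K=\{U(t)\}\) is compact in \(H^1\). By Plancherel, \(\|(P_{\le N}-1)g\|_{H^1}\to0\) for each \(g\), and the operators are uniformly bounded. Equicontinuity plus a finite net then gives uniform convergence on \(K\), hence in \(L_t^\infty H_x^1\). I expect this to be the only step where the continuity hypothesis is genuinely used.

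For \eqref{eq:rhoN-zero}, I would write
\[
P_{\le N}f(U)-f(U_N)=\bigl(P_{\le N}f(U)-f(U)\bigr)+\bigl(f(U)-f(U_N)\bigr).
\]
The second term is bounded by Lemma~\ref{lem:nonlinear-lip}:
\[
\|f(U)-f(U_N)\|_{L_t^2W_x^{1,(2^*)'}}\le C_d\bigl(\|U\|_{\cZ^1}^p+\|U_N\|_{\cZ^1}^p\bigr)\|U-U_N\|_{\cX^1}.
\]
This tends to zero by \eqref{eq:PN-uniform-X} and \eqref{eq:PN-strong-X}. For the first term, \eqref{eq:nonlinear-single} places \(f(U)\in L_t^2W_x^{1,(2^*)'}(I)\). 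Since \((2^*)'\) and the time exponent \(2\) are finite, the same pointwise-in-time approximate-identity argument, with domination by \((C_\chi+1)\|f(U(t))\|_{W^{1,(2^*)'}}\in L_t^2\), gives convergence to zero.

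The main obstacle I anticipate is therefore only the \(L^\infty_t\) component handled above. Everything else is routine once we note that no \(L^\infty_x\) or \(L^1_x\) endpoint issues arise.
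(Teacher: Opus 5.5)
Your proposal is correct and follows essentially the same route as the paper. Both arguments use uniform $L^1$ bounds on the dilated kernel for all four components, dominated convergence in time for the three finite-exponent components, and compactness of $\{U(t):t\in I\}$ in $H^1$ for the $L_t^\infty H_x^1$ component. The nonlinear term is handled by the same splitting $(P_{\le N}-1)f(U)+\bigl(f(U)-f(U_N)\bigr)$, controlled by Lemma~\ref{lem:nonlinear-lip} together with strong convergence of $P_{\le N}$ on $f(U)\in L_t^2W_x^{1,(2^*)'}(I)$.
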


\begin{proof}
The convolution kernel of \(P_{\le N}\) is a dilation of a fixed
Schwartz function.  Hence these operators are uniformly bounded on
\(L^q(\R^d)\), \(1\le q\le\infty\), and on
\(W^{1,q}(\R^d)\), \(1<q<\infty\), and converge strongly to the
identity on the corresponding spaces when the exponent is finite.
Applying these bounds to the four components of \(\cX^1(I)\) gives
\eqref{eq:PN-uniform-X}.

For the three components with finite time exponent, spatial strong
convergence and the uniform multiplier bounds give, by dominated
convergence in time,
\[
 \|(P_{\le N}-1)U\|_{L_t^2W_x^{1,2^*}(I)}
 +\|(P_{\le N}-1)\nabla U\|_{L_t^{q_c}L_x^{r_c}(I)}
 +\|(P_{\le N}-1)U\|_{L_{t,x}^{p_c}(I)}
 \longrightarrow0.
\]
For the remaining component, the set
\(
 \{U(t):t\in I\}
\)
is compact in \(H^1\), because \(U\in C(I;H^1)\) and \(I\) is
compact.  The uniformly bounded operators \(P_{\le N}-1\) converge
strongly to zero on \(H^1\), and therefore uniformly on this compact
set.  Thus
\[
 \sup_{t\in I}\|(P_{\le N}-1)U(t)\|_{H^1}\longrightarrow0,
\]
which proves \eqref{eq:PN-strong-X}.

For the nonlinear truncation error, write
\begin{align}
 P_{\le N}f(U)-f(P_{\le N}U)
 ={}&(P_{\le N}-1)f(U)+f(U)-f(P_{\le N}U).
 \label{eq:rhoN-split}
\end{align}
By \eqref{eq:nonlinear-single},
\(
 f(U)\in L_t^2W_x^{1,(2^*)'}(I),
\)
so the first term on the right-hand side of \eqref{eq:rhoN-split}
tends to zero in this space.  Lemma~\ref{lem:nonlinear-lip} gives
\begin{align*}
 \|f(U)-f(P_{\le N}U)\|_{L_t^2W_x^{1,(2^*)'}(I)}
 \le
 C_d\bigl(
 \|U\|_{\cZ^1(I)}^p
 +\|P_{\le N}U\|_{\cZ^1(I)}^p
 \bigr)
 \|U-P_{\le N}U\|_{\cX^1(I)},
\end{align*}
and the right-hand side tends to zero by
\eqref{eq:PN-uniform-X}--\eqref{eq:PN-strong-X}.  This proves
\eqref{eq:rhoN-zero}.
\end{proof}

Let \(I=[0,T]\) be a finite interval on which the limiting heat
solution satisfies
\(
 u\in C(I;H^1)\cap\cX^1(I).
\)
At this regularity, directly inserting \(u\) into the nearby CGL
equation would require two additional derivatives in the diffusion
error.  We instead use the low-frequency truncation
\(u_N=P_{\le N}u\).

Let $u$ solve
\begin{equation}\label{eq:heat}
 \partial_tu-\Delta u+\mu f(u)=0,
 \qquad u(0)=u_0.
\end{equation}

\begin{proposition}
\label{prop:heat-truncated-residual}
Define
\begin{equation}\label{eq:heat-truncated-residual-definition}
 e_{\eps,N}
 :=
 \partial_tu_N-A_\eps\Delta u_N
 +\mu C_\eps f(u_N)+r_\eps u_N.
\end{equation}
Then, in the sense of distributions,
\begin{equation}\label{eq:heat-truncated-residual-expansion}
 \begin{aligned}
 e_{\eps,N}
 ={}(1-A_\eps)\Delta u_N
 +\mu(C_\eps-1)f(u_N)
 +\mu\bigl(f(u_N)-P_{\le N}f(u)\bigr)
 +r_\eps u_N.
 \end{aligned}
\end{equation}
In particular, \(e_{\eps,N}\in\cN^1(I)\), and there exists a finite
constant \(B_{u,T,\chi}\), independent of \(N\) and \(\eps\), such
that
\begin{equation}\label{eq:heat-truncated-residual-estimate}
 \|e_{\eps,N}\|_{\cN^1(I)}
 \le
 B_{u,T,\chi}
 \bigl(
 N^2|A_\eps-1|+|C_\eps-1|+r_\eps
 \bigr)
 +\rho_N(u;I).
\end{equation}
\end{proposition}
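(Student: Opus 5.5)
The plan is to derive the expansion first and then bound each of its four terms in a suitable summand of \(\cN^1(I)\).

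\emph{Step 1: the identity.} Apply \(P_{\le N}\) to \eqref{eq:heat}. Since \(P_{\le N}\) is a Fourier multiplier, it commutes with \(\partial_t\) and \(\Delta\) in the sense of distributions, so
\[
\partial_tu_N=\Delta u_N-\mu P_{\le N}f(u).
\]
This is justified because \(u\) is a mild solution with \(f(u)\in L_t^2W_x^{1,(2^*)'}(I)\) by \eqref{eq:nonlinear-single}. Substituting into \eqref{eq:heat-truncated-residual-definition} and writing \(C_\eps f(u_N)=(C_\eps-1)f(u_N)+f(u_N)\) gives exactly \eqref{eq:heat-truncated-residual-expansion}.

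\emph{Step 2: the linear terms, placed in \(L_t^1H_x^1\).}
\begin{itemize}
\item By Bernstein, \(\|\Delta u_N(t)\|_{H^1}\le C_\chi N^2\|u(t)\|_{H^1}\) for \(N\ge1\). Hence
\[
\|(1-A_\eps)\Delta u_N\|_{L_t^1H_x^1(I)}\le C_\chi T N^2|A_\eps-1|\,\|u\|_{L_t^\infty H_x^1(I)}.
\]
\item Similarly,
\[
\|r_\eps u_N\|_{L_t^1H_x^1}\le T r_\eps\|u\|_{L_t^\infty H^1_x}.
\]
\end{itemize}

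\emph{Step 3: the nonlinear terms, placed in \(L_t^2W_x^{1,(2^*)'}\).}
\begin{itemize}
\item By \eqref{eq:nonlinear-single} and \eqref{eq:PN-uniform-X}, the coefficient-error term satisfies
\[
\|(C_\eps-1)f(u_N)\|_{L_t^2W_x^{1,(2^*)'}}\le C_d|C_\eps-1|\,\|u_N\|_{\cZ^1}^p\|u_N\|_{\cX^1}\le C_dC_\chi^{p+1}|C_\eps-1|\,\|u\|_{\cX^1(I)}^{p+1}.
\]
The constant is independent of \(N\).
\item The commutator term \(f(u_N)-P_{\le N}f(u)\) has norm exactly \(\rho_N(u;I)\) by \eqref{eq:rhoN-definition}, with coefficient \(|\mu|=1\).
\end{itemize}

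\emph{Step 4: conclusion.} Summing the terms, using the decomposition of the Banach sum \(\cN^1(I)\), gives \eqref{eq:heat-truncated-residual-estimate} with
\[
B_{u,T,\chi}=C_\chi(T+1)\bigl(\|u\|_{\cX^1(I)}+\|u\|_{\cX^1(I)}^{p+1}\bigr).
\]

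The only delicate point is Step 1. One must confirm that the mild formulation yields the distributional equation, so that the projected identity holds with the time derivative of \(u_N\) interpreted correctly. This follows because the Duhamel formula commutes with \(P_{\le N}\) and, for fixed \(N\), \(u_N\) is smooth in \(x\). Each summand in the expansion lies in \(\cN^1(I)\), so \(e_{\eps,N}\in\cN^1(I)\).
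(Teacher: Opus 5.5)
Your proposal is correct and follows the paper's proof essentially step for step. You apply \(P_{\le N}\) to the heat equation and substitute to get the expansion, place the diffusion and damping errors in \(L_t^1H_x^1\) via Bernstein, and place the two nonlinear terms in \(L_t^2W_x^{1,(2^*)'}\) via \eqref{eq:nonlinear-single}, \eqref{eq:PN-uniform-X} and the definition of \(\rho_N\). The paper's proof also records that the distributional identity is equivalent to the Duhamel form with \(S_{A_\eps,r_\eps}\), by linear uniqueness from Proposition~\ref{prop:heat-linear}; this is not part of the statement but is used later when \(u_N\) is fed into the stability proposition.
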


\begin{proof}
Since \(u\) solves \eqref{eq:heat} in distributions and
\(P_{\le N}\) commutes with \(\partial_t\) and \(\Delta\),
\[
 \partial_tu_N=\Delta u_N-\mu P_{\le N}f(u).
\]
Substitution into \eqref{eq:heat-truncated-residual-definition} gives
\eqref{eq:heat-truncated-residual-expansion}.

The Fourier support of \(u_N\) is contained in
\(\{|\xi|\le2N\}\), so
\(
 \|\Delta u_N(t)\|_{H^1}
 \le C_\chi N^2\|u(t)\|_{H^1}.
\)
Therefore
\begin{equation}\label{eq:heat-truncated-diffusion-error}
 \|(1-A_\eps)\Delta u_N\|_{L_t^1H_x^1(I)}
 \le
 C_\chi T N^2|A_\eps-1|
 \|u\|_{L_t^\infty H_x^1(I)}.
\end{equation}
Similarly,
\begin{equation}\label{eq:heat-truncated-damping-error}
 \|r_\eps u_N\|_{L_t^1H_x^1(I)}
 \le C_\chi T r_\eps
 \|u\|_{L_t^\infty H_x^1(I)}.
\end{equation}
By \eqref{eq:nonlinear-single} and
Lemma \ref{lem:frequency-truncation},
\begin{align}
 \|(C_\eps-1)f(u_N)\|_{L_t^2W_x^{1,(2^*)'}(I)}
 \le
 C_{d,\chi}|C_\eps-1|
 \|u\|_{\cZ^1(I)}^p\|u\|_{\cX^1(I)}.
 \label{eq:heat-truncated-nonlinear-coefficient}
\end{align}
The remaining nonlinear term is bounded by \(\rho_N(u;I)\).
Combining these estimates according to the Banach-sum definition of
\(\cN^1(I)\) proves
\eqref{eq:heat-truncated-residual-estimate}.

Finally, set
\(
 S_{A_\eps,r_\eps}(t)
 :=e^{-r_\eps t}e^{tA_\eps\Delta},
 ~ t\ge0.
\)
The identity
\eqref{eq:heat-truncated-residual-definition} is equivalent to
\begin{equation}\label{eq:heat-truncated-mild-identity}
 \begin{aligned}
 u_N(t)
 ={}S_{A_\eps,r_\eps}(t)u_N(0)
 +\int_0^tS_{A_\eps,r_\eps}(t-s)
 \bigl[-\mu C_\eps f(u_N(s))+e_{\eps,N}(s)\bigr],ds.
 \end{aligned}
\end{equation}
The forcing belongs to \(\cN^1(I)\) by
\eqref{eq:nonlinear-single} and the estimates above.  The equivalence
of the distributional and mild formulations follows from uniqueness
for the linear Cauchy problem furnished by
Proposition~\ref{prop:heat-linear}.
\end{proof}

For the quantitative refinement, the limiting heat solution itself
can be used as the approximate solution.  The next proposition supplies
the required \(L_t^2H_x^3\) bound from \(H^2\) initial data and estimates
the resulting residual.

\begin{proposition}\label{prop:heat-residual}
Let $I=[0,T]$.  If $u_0\in H^2$ and
$u\in C(I;H^1)\cap\cX^1(I)$ is a mild solution of \eqref{eq:heat}, then
\[
 u\in C(I;H^2)\cap L_t^2H_x^3(I).
\]
For
\(
 e_\eps=(1-A_\eps)\Delta u+
 \mu(C_\eps-1)f(u)+r_\eps u,
\)
one has
\begin{align}
 \|e_\eps\|_{\cN^1(I)}\le{}
 T^{1/2}|1-A_\eps|\|u\|_{L_t^2H_x^3(I)}
 +C_d|C_\eps-1|\|u\|_{\cZ^1(I)}^p\|u\|_{\cX^1(I)}
 +Tr_\eps\|u\|_{L_t^\infty H_x^1(I)}.
 \label{eq:heat-residual}
\end{align}
\end{proposition}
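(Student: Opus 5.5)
The proof splits into a regularity statement and a residual estimate. I would do the regularity first, since the residual bound then follows from it by direct bookkeeping.

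\emph{Regularity.} The plan is to show \(u\in C(I;H^2)\cap L_t^2H_x^3(I)\) by applying the linear energy estimate one derivative higher. Let \(v=\nabla u\), which solves \(\partial_tv-\Delta v=-\mu\nabla f(u)\) with \(v(0)=\nabla u_0\in H^1\). Proposition~\ref{prop:heat-linear} with \(A=1\), \(r=0\) and \(k=0,1\) gives
\[
\|\nabla u\|_{L_t^\infty H^1_x}+\|\nabla^2u\|_{L^2_tH^1_x}\lesssim \|u_0\|_{H^2}+\|\nabla f(u)\|_{\cN^1(J)}
\]
on any subinterval \(J\). In the \(k=1\) energy identity, I would place \(\nabla^2 f(u)\) paired against \(\nabla^2 u\) in the \(L_t^2L_x^{(2^*)'}\) summand. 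By \eqref{eq:pointwise-grad-f} applied to \(\nabla u\), pointwise
\[
|\nabla^2 f(u)|\lesssim |u|^p|\nabla^2u|+|u|^{p-1}|\nabla u|^2.
\]
H\"older with \eqref{eq:critical-identities} bounds the first term by \(\|u\|_{L^{p_c}(J)}^p\|\nabla^2u\|_{L_t^{q_c}L_x^{r_c}(J)}\). The quantity \(\|\nabla^2u\|_{L_t^{q_c}L_x^{r_c}}\) is controlled by \eqref{eq:aux-interp} through the \(H^2\)-level energy norm. The second term is bounded by \(\|u\|_{L^{p_c}}^{p-1}\|\nabla u\|_{L_t^{q_c}L_x^{r_c}}\|\nabla u\|_{L^{p_c}_{t,x}}\). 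Here \(\|\nabla u\|_{L^{p_c}_{t,x}}\) is controlled by \(\|\nabla^2u\|_{L_t^{q_c}L_x^{r_c}}\) via \eqref{eq:sob-pc}. This is therefore again \(\lesssim\|u\|_{\cZ^1(J)}^p\) times the \(H^2\)-level norm.

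\emph{The main obstacle} is closing this estimate. I would handle it exactly as in Proposition~\ref{prop:heat-stability}. Partition \(I\) into \(N_0(d,\|u\|_{\cX^1},\delta)\) intervals with \(\|u\|_{\cZ^1(J_j)}\le\delta\). On each interval the nonlinear term is absorbed, giving
\[
\|\nabla u\|_{\cX^1(J_j)}\le 2C\|u(t_j)\|_{H^2},
\]
and iterating over the finitely many intervals gives the global bound. To make the absorption rigorous rather than a priori, I would run it on smooth approximations, or equivalently perform the fixed point in the \(H^2\)-level space. For that I would use persistence of regularity: the \(\cX^1\) local solution from \(H^2\) data is constructed in the smaller space, and uniqueness in \(C_tH^1\cap\cX^1\) from Proposition~\ref{prop:heat-stability} identifies it with \(u\). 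Continuity in \(H^2\) then follows from the Duhamel formula with forcing in \(\cN^1\) at one derivative higher. The \(L^2_tH^3_x\) bound is the \(a_0\|\nabla D^k z\|_{L^2}\) term of the energy estimate with \(z=\nabla u\), \(k=1\), together with \(u\in L_t^2H^2_x\) (\(T\) finite).

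\emph{Residual bound.} This is straightforward once regularity is known. Put \((1-A_\eps)\Delta u\) and \(r_\eps u\) in \(L_t^1H_x^1\), and \(\mu(C_\eps-1)f(u)\) in \(L_t^2W^{1,(2^*)'}_x\).
\begin{itemize}
\item For the diffusion term, Cauchy--Schwarz in time gives \(\|\Delta u\|_{L^1_tH^1_x}\le T^{1/2}\|\Delta u\|_{L^2_tH^1_x}\le T^{1/2}\|u\|_{L^2_tH^3_x}\).
\item For the nonlinear term, \eqref{eq:nonlinear-single} gives \(C_d\|u\|_{\cZ^1}^p\|u\|_{\cX^1}\).
\item For the damping term, \(\|r_\eps u\|_{L^1_tH^1_x}\le Tr_\eps\|u\|_{L^\infty_tH^1_x}\).
\end{itemize}
Summing the three contributions according to the Banach-sum definition of \(\cN^1(I)\) gives \eqref{eq:heat-residual}.
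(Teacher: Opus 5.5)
Your proposal is correct. Your residual estimate is the paper's: the same splitting of $e_\eps$ between the $L_t^1H_x^1$ and $L_t^2W_x^{1,(2^*)'}$ summands. The regularity part, however, follows a genuinely different route.

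You differentiate once and apply Proposition~\ref{prop:heat-linear} to $\nabla u$ with forcing $-\mu\nabla f(u)$. The critical H\"older splitting gives $\|\nabla f(u)\|_{L_t^2W_x^{1,(2^*)'}(J)}\lesssim\|u\|_{\cZ^1(J)}^p\|\nabla u\|_{\cX^1(J)}$, which you absorb on $\cZ^1$-small intervals. This yields the full $\cX^1$ norm of $\nabla u$ at once, and $C(I;H^2)$ then comes from the Duhamel formula.

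The paper instead works with second-order difference quotients $\delta_{j,h}^2u$, using a discrete chain rule and a discrete Gagliardo--Nirenberg inequality. It tests the differenced equation against $\delta_{j,h}^2\overline u$, with only $\|u\|_{L_{t,x}^{p_c}}$ as the smallness parameter, and obtains bounds uniform in $h$. It then passes to weak limits to get $L_t^\infty H_x^2\cap L_t^2H_x^3$, and deduces continuity from $L_t^2H_x^3\cap H_t^1H_x^1\hookrightarrow C(I;H^2)$.

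What the difference quotients buy is that, for each fixed $h>0$, the energy $\mathcal E_h(J)$ is already finite for any $u\in C(I;H^1)\cap\cX^1(I)$. The absorption is therefore legitimate directly on the given mild solution, and no auxiliary local theory is needed. In your version the absorption is only a priori, since finiteness of $\|\nabla u\|_{\cX^1(J)}$ is exactly what is being proved, so your justification step carries real weight.

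Appealing to \emph{smooth approximations} does not by itself settle it, because you would still need the approximating solutions to stay in the higher class on all of $I$. Your fixed-point version does settle it. Build the local solution with $\nabla w\in\cX^1$ by contraction in the weaker $\cX^1$ metric. Since $H^2$ is subcritical, its existence time from any restart time $s$ is bounded below in terms of $\|u(s)\|_{H^2}$ alone, so your uniform a priori bound lets you continue it across $I$. Then identify it with $u$ through the uniqueness in Proposition~\ref{prop:heat-stability}.

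With that continuation step made explicit, your route is the standard persistence-of-regularity argument. It reuses the paper's linear and nonlinear estimates more directly than the discrete machinery does.
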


\begin{proof}
For the residual, use the decomposition associated with the
sum norm in $\cN^1(I)$,
\[
 e_{\eps,1}=(1-A_\eps)\Delta u+r_\eps u,
 \qquad e_{\eps,2}=\mu(C_\eps-1)f(u).
\]
Assuming for the conclusion $u\in L_t^2H_x^3(I)$, one has
$\Delta u\in L_t^2H_x^1(I)$, and H\"older's inequality in time gives
\[
 \|(1-A_\eps)\Delta u\|_{L_t^1H_x^1(I)}
 \le T^{1/2}|1-A_\eps|\|u\|_{L_t^2H_x^3(I)}.
\]
Moreover,
\[
 \|r_\eps u\|_{L_t^1H_x^1(I)}
 \le Tr_\eps\|u\|_{L_t^\infty H_x^1(I)}.
\]
Consequently,
\[
 \|e_{\eps,1}\|_{L_t^1H_x^1(I)}
 \le T^{1/2}|1-A_\eps|\|u\|_{L_t^2H_x^3(I)}
 +Tr_\eps\|u\|_{L_t^\infty H_x^1(I)}.
\]
On the other hand, Lemma~\ref{lem:nonlinear-lip}, in the form
\eqref{eq:nonlinear-single}, yields
\[
 \|e_{\eps,2}\|_{L_t^2W_x^{1,(2^*)'}(I)}
 \le C_d|C_\eps-1|\|u\|_{\cZ^1(I)}^p
 \|u\|_{\cX^1(I)}.
\]
This proves \eqref{eq:heat-residual}.

We next prove the persistence statement by difference quotients.  For
$0<|h|\le1$ and a coordinate vector $e_j$, set
\[
 D_{j,h}^+g(x):=\frac{g(x+he_j)-g(x)}h,
 \quad
 D_{j,h}^-g(x):=\frac{g(x)-g(x-he_j)}h,
 \quad
 \delta_{j,h}^2g:=D_{j,h}^-D_{j,h}^+g.
\]
Let
\[
 a=\frac{2(d+2)}d,
 \qquad b=\frac{2(d+2)}{d-1}.
\]
For $M_{j,h}(u)(x):=|u(x+he_j)|+|u(x)|+|u(x-he_j)|$, the discrete chain
rule gives, for $p\ge1$,
\[
 |\delta_{j,h}^2f(u)|
 \lesssim_d M_{j,h}(u)^p|\delta_{j,h}^2u|
 +M_{j,h}(u)^{p-1}
 \bigl(|D_{j,h}^+u|^2+|D_{j,h}^-u|^2\bigr).
\]
When $p=1$, this follows by smooth approximation of $f$.  Translation
invariance, H\"older's inequality, and the discrete
Gagliardo--Nirenberg inequality give, with constants independent of $h$,
\begin{align*}
 \|D_{j,h}^{\pm}u\|_{L_{t,x}^b(J)}^2
 &\lesssim_d \|u\|_{L_{t,x}^{p_c}(J)}
 \left(\sum_{k=1}^d
 \|\delta_{k,h}^2u\|_{L_{t,x}^a(J)}^2\right)^{1/2},\\
 \sum_{j=1}^d\int_J\!\int
 |\delta_{j,h}^2f(u)|\,|\delta_{j,h}^2u|
 &\lesssim_d \|u\|_{L_{t,x}^{p_c}(J)}^p
 \sum_{j=1}^d\|\delta_{j,h}^2u\|_{L_{t,x}^a(J)}^2.
\end{align*}
These are the difference-quotient versions of the usual second-derivative
chain and interpolation estimates.

Apply $\delta_{j,h}^2$ to \eqref{eq:heat}, test against
$\delta_{j,h}^2\overline u$, and sum over $j$.  Steklov averaging in time,
together with spatial convolution of the equation, justifies the test for
the mild solution; one then passes to the convolution limit.  All constants
below are independent of the regularization and of $h$.  If
\[
 \mathcal E_h(J):=\sum_{j=1}^d\left(
 \|\delta_{j,h}^2u\|_{L_t^\infty L_x^2(J)}^2
 +\|\nabla\delta_{j,h}^2u\|_{L_{t,x}^2(J)}^2\right),
\]
then spatial Sobolev inequality and interpolation between
$L_t^\infty L_x^2$ and $L_t^2L_x^{2^*}$ imply
\[
 \sum_{j=1}^d
 \|\delta_{j,h}^2u\|_{L_{t,x}^a(J)}^2
 \lesssim_d \mathcal E_h(J).
\]
Consequently, for $J=[t_J,t_{J+1}]$,
\[
 \mathcal E_h(J)
 \le C_d\sum_{j=1}^d\|\delta_{j,h}^2u(t_J)\|_2^2
 +C_d\|u\|_{L_{t,x}^{p_c}(J)}^p\mathcal E_h(J).
\]
Since $u\in L_{t,x}^{p_c}(I)$, divide $I$ into finitely many intervals
on which the last coefficient is at most $1/2$.  Absorption and iteration,
together with
\(
 \|\delta_{j,h}^2u_0\|_2\le\|\partial_{jj}u_0\|_2,
\)
give
\begin{equation}\label{eq:heat-uniform-second-differences}
 \sup_{0<|h|\le1}\sum_{j=1}^d\left(
 \|\delta_{j,h}^2u\|_{L_t^\infty L_x^2(I)}^2
 +\|\nabla\delta_{j,h}^2u\|_{L_{t,x}^2(I)}^2\right)<\infty.
\end{equation}

Let $h_n\to0$.  By \eqref{eq:heat-uniform-second-differences}, after
passing to a subsequence, $\delta_{j,h_n}^2u$ converges weak-* in
$L_t^\infty L_x^2$ and weakly in $L_t^2H_x^1$.  For every test function
$\psi$, discrete integration by parts gives
\[
 \int\!\!\int \delta_{j,h_n}^2u\,\psi
 =\int\!\!\int u\,\delta_{j,h_n}^2\psi
 \longrightarrow \int\!\!\int u\,\partial_{jj}\psi.
\]
Thus the two weak limits are $\partial_{jj}u$.  Lower semicontinuity and
the Fourier equivalences
\[
 \|D^2u\|_2^2\simeq_d\sum_{j=1}^d\|\partial_{jj}u\|_2^2,
 \qquad
 \|D^3u\|_2^2\simeq_d
 \sum_{j=1}^d\|\nabla\partial_{jj}u\|_2^2
\]
therefore yield
\(
 u\in L_t^\infty H_x^2(I)\cap L_t^2H_x^3(I).
\)

The additional regularity also gives $f(u)\in L_t^2H_x^1$.  In dimension
three this follows from $H^2\hookrightarrow L^\infty$; in dimension four
one uses $H^2\hookrightarrow L^6$ and
$\|\nabla u\|_6\lesssim\|u\|_{H^3}$.  For $d=5,6$, Sobolev embedding gives
\[
 \|\nabla f(u)\|_2
 \lesssim_d \|u\|_{2^*}^p
 \|\nabla u\|_{\frac{2d}{d-4}}
 \lesssim_d \|u\|_{H^1}^p\|u\|_{H^3}.
\]
Also, $H^2\hookrightarrow L^{2(p+1)}$ for $3\le d\le6$.  Thus
\[
 \|f(u)\|_{L_t^2H_x^1(I)}
 \lesssim_d T^{1/2}\|u\|_{L_t^\infty H_x^2}^{p+1}
 +\|u\|_{L_t^\infty H_x^2}^p\|u\|_{L_t^2H_x^3}.
\]
Since $\Delta u$ and $f(u)$ belong to $L_t^2H_x^1(I)$, the equation gives
$\partial_tu\in L_t^2H_x^1(I)$.  Hence
\[
 L_t^2H_x^3(I)\cap H_t^1H_x^1(I)
 \hookrightarrow C(I;H^2),
\]
and the Duhamel formula identifies this continuous representative with the
given mild solution and its initial value.  Thus $u\in C(I;H^2)$.
\end{proof}

\subsection{Local zero-dispersion limit}

We first prove the qualitative statement at the natural energy
regularity.

\begin{proof}[Proof of Theorem~\ref{thm:intro-B}(a): qualitative convergence]
Fix
\(
 I=[0,T],
 ~
 0<T<T_{\max}(u_0).
\)
Let \(C_\chi\) be chosen so that
\[
 \sup_{N\ge1}\|P_{\le N}u\|_{\cX^1(I)}
 \le
 C_\chi\|u\|_{\cX^1(I)},
\]
and set
\(
 M:=1+C_\chi\|u\|_{\cX^1(I)}.
\) Since
\(
 A_\eps\to1,~
 C_\eps\to1,~
 r_\eps\to0,~
 r_\eps\ge0,
\)
there exists \(\eps_0>0\) such that, whenever
\(0<\eps<\eps_0\),
\(
 \operatorname{Re}A_\eps\ge 1 / 2,
 ~
 |C_\eps|\le2,
 ~
 0\le r_\eps\le1.
\)
Apply Proposition~\ref{prop:heat-stability} with
\(
 a_0=1 / 2,~
 C_0=2,~
 r_0=1,
\)
and with the fixed reference bound \(M\).  Let
\[
 \varepsilon_*
 =
 \varepsilon_*(d,1/2,2,1,T,M)>0,
 \qquad
 K_*
 =
 K_*(d,1/2,2,1,T,M)>0
\]
be the corresponding constants.  These constants are independent of
the frequency cutoff \(N\).

Let \(\eta>0\).  By
Lemma~\ref{lem:frequency-truncation}, choose \(N=N(\eta)\ge1\)
so large that
\begin{equation}\label{eq:heat-choose-N-X}
 \|P_{\le N}u-u\|_{\cX^1(I)}
 <
 \frac{\eta}{2},
\end{equation}
and
\begin{equation}\label{eq:heat-choose-N-error}
 \|(1-P_{\le N})u_0\|_{H^1}
 +
 \rho_N(u;I)
 <
 \min\left\{
 \frac{\varepsilon_*}{4},
 \frac{\eta}{4K_*}
 \right\}.
\end{equation}
This \(N\) is now fixed.

By
\(
 \|u_0^\eps-u_0\|_{H^1}\to0,~
 A_\eps\to1,~
 C_\eps\to1,~
 r_\eps\to0,
\)
and Proposition~\ref{prop:heat-truncated-residual}, there exists
\(0<\eps_\eta<\eps_0\) such that, whenever
\(0<\eps<\eps_\eta\),
\begin{equation}\label{eq:heat-choose-eps}
 \|u_0^\eps-u_0\|_{H^1}
 +
 B_{u,T,\chi}
 \bigl(
 N^2|A_\eps-1|
 +|C_\eps-1|
 +r_\eps
 \bigr)
 <
 \min\left\{
 \frac{\varepsilon_*}{4},
 \frac{\eta}{4K_*}
 \right\}.
\end{equation}

Take \(U=u_N=P_{\le N}u\).  Since
\(
 U(0)=P_{\le N}u_0,
\)
the triangle inequality,
\eqref{eq:heat-truncated-residual-estimate},
\eqref{eq:heat-choose-N-error}, and
\eqref{eq:heat-choose-eps} yield
\begin{align*}
 \|u_0^\eps-U(0)\|_{H^1}
 +\|e_{\eps,N}\|_{\cN^1(I)}
 &\le
 \|u_0^\eps-u_0\|_{H^1}
 +\|(1-P_{\le N})u_0\|_{H^1}
 \\
 &\quad +B_{u,T,\chi}
 \bigl(
 N^2|A_\eps-1|
 +|C_\eps-1|
 +r_\eps
 \bigr)
 +\rho_N(u;I)
 \\
 &<\varepsilon_*.
\end{align*}
Hence Proposition~\ref{prop:heat-stability} applies and produces the
unique exact CGL solution
\(
 u^\eps\in C(I;H^1)\cap\cX^1(I).
\)
Moreover,
\begin{align*}
 \|u^\eps-P_{\le N}u\|_{\cX^1(I)}
 \le
 K_*
 \bigl(
 \|u_0^\eps-P_{\le N}u_0\|_{H^1}
 +\|e_{\eps,N}\|_{\cN^1(I)}
 \bigr)
 <
 \frac{\eta}{2}.
\end{align*}
Combining this estimate with \eqref{eq:heat-choose-N-X} gives
\(
 \|u^\eps-u\|_{\cX^1(I)}
 <
 \eta.
\)

For completeness, taking for example \(\eta=1\) first gives existence
on \(I\) for every sufficiently small \(\eps\).  For a general
\(\eta>0\), the cutoff \(N(\eta)\) may be different, but the exact
solution furnished by Proposition~\ref{prop:heat-stability} is the
same one because uniqueness holds in
\(C(I;H^1)\cap\cX^1(I)\).  Therefore
\(
 u^\eps\to u
 ~\text{in }\cX^1(I).
\)
\end{proof}

We finally record the quantitative refinement.

\begin{proof}[Proof of the quantitative refinement in Theorem~\ref{thm:intro-B}(a)]
Assume in addition that \(u_0\in H^2(\R^d)\).  By
Proposition~\ref{prop:heat-residual},
\(
 u\in C(I;H^2)\cap L_t^2H_x^3(I).
\)
Set
\(
 M:=1+\|u\|_{\cX^1(I)}.
\)
For all sufficiently small \(\eps\),
\(
 \operatorname{Re}A_\eps\ge1 / 2,
 ~
 |C_\eps|\le2,
 ~
 0\le r_\eps\le1.
\)
Let
\[
 \varepsilon_*
 =
 \varepsilon_*(d,1/2,2,1,T,M),
 \qquad
 K_*
 =
 K_*(d,1/2,2,1,T,M)
\]
be the constants from
Proposition~\ref{prop:heat-stability}.  Define
\[
 B_{u,T}
 :=
 T^{1/2}\|u\|_{L_t^2H_x^3(I)}
 +
 C_d\|u\|_{\cZ^1(I)}^p\|u\|_{\cX^1(I)}
 +
 T\|u\|_{L_t^\infty H_x^1(I)}.
\]
Proposition~\ref{prop:heat-residual} gives
\[
 \|e_\eps\|_{\cN^1(I)}
 \le
 B_{u,T}
 \bigl(
 |A_\eps-1|
 +|C_\eps-1|
 +r_\eps
 \bigr).
\]
Since
\[
 \|u_0^\eps-u_0\|_{H^1}
 +
 B_{u,T}
 \bigl(
 |A_\eps-1|
 +|C_\eps-1|
 +r_\eps
 \bigr)
 \longrightarrow0,
\]
the stability smallness condition holds for all sufficiently small
\(\eps\), taking \(U=u\) as the approximate solution.  Therefore
\[
 \|u^\eps-u\|_{\cX^1(I)}
 \le
 K_*
 \left[
 \|u_0^\eps-u_0\|_{H^1}
 +
 B_{u,T}
 \bigl(
 |A_\eps-1|
 +|C_\eps-1|
 +r_\eps
 \bigr)
 \right].
\]
Taking
\(
 K_{u,T}:=K_*\max\{1,B_{u,T}\}
\)
gives \eqref{eq:intro-heat-limit}.
\end{proof}
 
\section{The inviscid limit}\label{sec:schrodinger}

We now turn to the inviscid limit.  As the coefficient
$A_{\eps}\to -i$, the dissipative part may vanish, and the uniformly parabolic
energy estimate no longer provides uniform control.  The main task is
therefore to establish linear estimates whose constants remain
bounded as the real part of the coefficient tends to zero.  Once these
estimates are available, the nonlinear stability and residual arguments
follow the same general scheme as in the zero-dispersion limit.

\subsection{Uniform linear theory near the endpoint}

Fix \(b_0\in(0,1]\) and write
\begin{equation}\label{eq:A-convention}
 A=a-ib,\qquad a\ge0,\qquad b\ge b_0,\qquad a^2+b^2=1.
\end{equation}
For \(t>0\), let
\(
 S_A(t)=e^{tA\Delta}.
\)

We begin with the homogeneous theory.  The modulus condition on \(A\)
gives a uniform kernel bound, while the lower bound on \(b\) preserves the
dispersive decay in the time difference that is needed in the
Keel--Tao argument.  Thus the usual admissible estimates remain uniform
when \(a\to 0\).

\begin{lemma}\label{lem:sch-kernel-hom}
Let \(A\) satisfy \eqref{eq:A-convention}.  Then, for \(t>0\),
\begin{equation}\label{eq:sch-disp-basic}
 \|S_A(t)\|_{L^1_x\to L^\infty_x}\le C_d t^{-d/2},
\end{equation}
and
\begin{equation}\label{eq:sch-energy-basic}
 \|S_A(t)\phi\|_2\le \|\phi\|_2.
\end{equation}
Moreover, if \((q,r)\) is Schr\"odinger-admissible, then
\begin{equation}\label{eq:sch-homogeneous}
 \|S_A(t-t_0)\phi\|_{L^q_tL^r_x(I\times\R^d)}
 \le C(d,b_0,q,r)\|\phi\|_2,
\end{equation}
uniformly in the finite interval \(I\) and uniformly in \(a\ge0\).
\end{lemma}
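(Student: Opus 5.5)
The plan has three steps: compute the kernel for the dispersive bound, use Plancherel for the energy bound, and run the Keel--Tao argument on a non-unitary operator family.

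\emph{Dispersive bound.} The convolution kernel of \(S_A(t)\) is the complex Gaussian
\[
K_t(x)=(4\pi tA)^{-d/2}e^{-|x|^2/(4tA)},
\]
with the principal branch, since \(\operatorname{Re}A\ge0\). Since \(\operatorname{Re}(1/A)=a/|A|^2=a\ge0\), we have \(|e^{-|x|^2/(4tA)}|\le1\). Together with \(|A|=1\) this gives \(\|K_t\|_{L^\infty}\le(4\pi t)^{-d/2}\), which is \eqref{eq:sch-disp-basic} with \(C_d=(4\pi)^{-d/2}\) and no dependence on \(a\). For \(a=0\) the kernel formula is understood as the usual oscillatory limit, or we argue by continuity in \(a\).

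\emph{Energy bound.} Plancherel gives
\[
\|S_A(t)\phi\|_2^2=\int e^{-2at|\xi|^2}|\widehat\phi|^2\,d\xi\le\|\phi\|_2^2 .
\]
This is \eqref{eq:sch-energy-basic}.

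\emph{Strichartz estimates.} The operator is not unitary, so I would not use the group property. Instead I apply the abstract Keel--Tao theorem to the family \(U(t)=S_A(t-t_0)\mathbf 1_{t\in I}\). Keel--Tao needs only two hypotheses.
\begin{itemize}
\item \(\|U(t)\|_{L^2\to L^2}\le1\). This holds by the energy bound.
\item The decay \(\|U(t)U(s)^*\|_{L^1\to L^\infty}\lesssim|t-s|^{-d/2}\).
\end{itemize}
To check the decay, compute \(U(t)U(s)^*=e^{(t-t_0)A\Delta}e^{(s-t_0)\bar A\Delta}\). Its Fourier multiplier is
\[
\exp\bigl(-a(t+s-2t_0)|\xi|^2+ib(t-s)|\xi|^2\bigr).
\]
This is a Gaussian with complex parameter \(\zeta=a(t+s-2t_0)-ib(t-s)\), and \(\operatorname{Re}\zeta\ge0\). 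Its kernel has sup norm \((4\pi|\zeta|)^{-d/2}\). Since \(|\zeta|\ge b|t-s|\ge b_0|t-s|\), we get
\[
\|U(t)U(s)^*\|_{L^1\to L^\infty}\le C_d\,b_0^{-d/2}|t-s|^{-d/2}.
\]
This is the only place the lower bound \(b\ge b_0\) enters. Keel--Tao then gives \eqref{eq:sch-homogeneous}, including the endpoint \((2,2^*)\) for \(d\ge3\). The constant depends only on \(d,b_0,q,r\), so it is uniform in \(I\) and in \(a\).

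The one step needing care is this mixed-operator decay estimate. \(U(t)U(s)^*\) is not a function of \(t-s\) alone, but its modulus is bounded below by \(b_0|t-s|\), and that lower bound is exactly what the Keel--Tao argument uses.
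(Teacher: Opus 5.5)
Your proposal is correct and follows the paper's proof essentially step by step. Both arguments use the same three ingredients: the complex Gaussian kernel bound from $|A|=1$ and $\operatorname{Re}(A^{-1})=a\ge0$, Plancherel for the $L^2$ contraction, and the Keel--Tao theorem fed by the lower bound $|a(t+s)-ib(t-s)|\ge b_0|t-s|$ for $S_A(t)S_A(s)^*$. The only cosmetic difference is that you apply Keel--Tao directly to the family $\mathbf 1_I(t)S_A(t-t_0)$, whereas the paper applies it to $\mathbf 1_{\{\tau\ge0\}}S_A(\tau)$ on the half-line and then restricts and translates; either way the constant depends only on $d,b_0,q,r$.
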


\begin{proof}
For \(a>0\), the kernel is
\begin{equation}\label{eq:sch-complex-gaussian}
 K_A(t,x)=(4\pi At)^{-d/2}
 \exp\left(-\frac{|x|^2}{4At}\right).
\end{equation}
Both sides depend continuously on \(A\) in the space of tempered
distributions as \(a\downarrow0\), so the same formula defines the Schr\"odinger kernel at \(a=0\).  Since \(|A|=1\) and
\(A^{-1}=\overline A=a+ib\),
\[
 |K_A(t,x)|=(4\pi t)^{-d/2}
 \exp\left(-\frac{a|x|^2}{4t}\right)
 \le (4\pi t)^{-d/2},
\]
which proves \eqref{eq:sch-disp-basic}.  Plancherel gives
\[
 \|S_A(t)\phi\|_2^2
 =\int e^{-2at|\xi|^2}|\widehat\phi(\xi)|^2\,d\xi
 \le \|\phi\|_2^2,
\]
which is \eqref{eq:sch-energy-basic}.

We also record the corresponding bound without the normalization
\(|A|=1\).  Let \(D\in\C\setminus\{0\}\) satisfy
\(\operatorname{Re}D\ge0\).  With the usual branch convention, the kernel
of \(e^{D\Delta}\) is
\[
 K_D(x)=(4\pi D)^{-d/2}
 \exp\left(-\frac{|x|^2}{4D}\right),
\]
where, on \(\operatorname{Re}D=0\), the right-hand side is a bounded
oscillatory function and represents the corresponding limit in the space of
tempered distributions.  Since
\(
 \operatorname{Re}(D^{-1})=\operatorname{Re}D/|D|^2\ge0,
\)
we have
\[
 |K_D(x)|
 =(4\pi|D|)^{-d/2}
 \exp\left(-\frac{|x|^2}{4}\operatorname{Re}(D^{-1})\right)
 \le C_d|D|^{-d/2}.
\]
Consequently,
\begin{equation}\label{eq:sch-general-complex-gaussian}
 \|e^{D\Delta}\|_{L^1_x\to L^\infty_x}
 \le C_d|D|^{-d/2}.
\end{equation}

Since \(S_A(s)^*=S_{\overline A}(s)\),
\[
 S_A(t)S_A(s)^*=e^{(a(t+s)-ib(t-s))\Delta}.
\]
Writing \(D_{t,s}=a(t+s)-ib(t-s)\), one has
\[
 |D_{t,s}|^2=a^2(t+s)^2+b^2(t-s)^2\ge b_0^2|t-s|^2.
\]
For \(t\ne s\), estimate \eqref{eq:sch-general-complex-gaussian} therefore
yields
\begin{equation}\label{eq:sch-product-disp}
 \|S_A(t)S_A(s)^*\|_{L^1_x\to L^\infty_x}
 \le C_d b_0^{-d/2}|t-s|^{-d/2}.
\end{equation}
The same estimates hold with \(A\) replaced by \(\overline A\).  Set
\(
 U_B(\tau)=\mathbf 1_{\{\tau\ge0\}}S_B(\tau),~B\in\{A,\overline A\}.
\)
Equations \eqref{eq:sch-energy-basic} and \eqref{eq:sch-product-disp}
verify the conditions of the
endpoint theorem of Keel--Tao~\cite{KeelTao1998}.  Hence, for
every admissible pair \((q,r)\),
\[
 \|U_B(\tau)\phi\|_{L^q_\tau L^r_x(\R\times\R^d)}
 \le C(d,b_0,q,r)\|\phi\|_2,
\]
and the corresponding dual estimate holds.  Restriction and time
translation give \eqref{eq:sch-homogeneous} on every finite interval.
\end{proof}

The homogeneous estimates do not by themselves give the retarded
inhomogeneous estimate with both time exponents equal to two. The usual
non-endpoint time-truncation step is not available in this equal-exponent
case. To reach the double endpoint, we first localize the bilinear form
to separated time rectangles. On each rectangle, the semigroup can be
factored at an intermediate time. Interpolating the resulting
homogeneous bound with the dispersive bound gives estimates for spatial
exponents in a neighborhood of the endpoint.

\begin{lemma}\label{lem:sch-separated-rectangles}
Put \(\sigma=d/2\).  Let \(Q=I_\tau\times J_t\) be a time rectangle such
that \(I_\tau\) lies before \(J_t\),
\[
 |I_\tau|=|J_t|=\lambda,
 \qquad \lambda\lesssim \operatorname{dist}(I_\tau,J_t)\lesssim \lambda.
\]
For
\[
 B_{A,Q}(F,G)=\int_{J_t}\int_{I_\tau}
 \langle S_A(t-\tau)F(\tau),G(t)\rangle\,d\tau\,dt,
\]
there exists \(\delta=\delta(d)>0\) such that, whenever
\[
 \left|\frac1r-\frac1{2^*}\right|<\delta,
 \qquad
 \left|\frac1{\widetilde r}-\frac1{2^*}\right|<\delta,
\]
one has
\begin{equation}\label{eq:sch-separated-estimate}
 |B_{A,Q}(F,G)|
 \le C(d,b_0)\lambda^{\beta(r,\widetilde r)}
 \|F\|_{L^2_tL_x^{\widetilde r'}(I_\tau)}
 \|G\|_{L^2_tL_x^{r'}(J_t)},
\end{equation}
where
\begin{equation}\label{eq:sch-beta}
 \beta(r,\widetilde r)
 =1-\frac d2\left(1-\frac1r-\frac1{\widetilde r}\right).
\end{equation}
The constant is uniform in \(A\) satisfying \eqref{eq:A-convention}.
\end{lemma}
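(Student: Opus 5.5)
We prove \eqref{eq:sch-separated-estimate} by two endpoint bounds for the bilinear form on the rectangle \(Q\), followed by bilinear interpolation. The text preceding the lemma indicates both bounds: a factorization through an intermediate time, and a direct dispersive bound. Fix \(Q=I_\tau\times J_t\), and let \(t_*\) be the midpoint of the gap between \(I_\tau\) and \(J_t\). For \(\tau\in I_\tau\) and \(t\in J_t\) we have \(t-t_*>0\), \(t_*-\tau>0\), and both are comparable to \(\lambda\). The semigroup property gives \(S_A(t-\tau)=S_A(t-t_*)S_A(t_*-\tau)\). Hence
\[
 B_{A,Q}(F,G)=\Bigl\langle \int_{I_\tau}S_A(t_*-\tau)F(\tau)\,d\tau,\ \int_{J_t}S_A(t-t_*)^*G(t)\,dt\Bigr\rangle .
\]
This is exactly the step where nonunitarity is harmless: we never need \(S_A(t)S_A(s)^*=S_A(t-s)\), only the genuine semigroup law and the adjoint \(S_A(s)^*=S_{\overline A}(s)\).

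\emph{Homogeneous endpoint bound.} By the dual form of Lemma~\ref{lem:sch-kernel-hom}, applied to \(S_A\) on \(I_\tau\) and to \(S_{\overline A}\) on \(J_t\), each factor is bounded in \(L^2_x\). In the first factor, \(\tau\mapsto t_*-\tau\) runs over an interval of positive times, which after time translation is the retarded propagator \(U_A\) of that lemma. For every admissible pair this gives
\[
 |B_{A,Q}(F,G)|\le C(d,b_0)\|F\|_{L^2_tL^{2^*\,\prime}_x}\|G\|_{L^2_tL^{2^*\,\prime}_x},
\]
which is \(\beta=0\) at \(r=\widetilde r=2^*\). This agrees with \eqref{eq:sch-beta}, since \(1-\tfrac d2\cdot\tfrac2d=0\). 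The same argument with the non-endpoint pairs \((q,r)\) and Hölder in time over intervals of length \(\lambda\) gives bounds with \(L^2_t\) norms at other spatial exponents \(r,\widetilde r\). The resulting power of \(\lambda\) is \(\lambda^{1/2-1/q}\lambda^{1/2-1/\widetilde q}\). Using \(2/q=d(1/2-1/r)\), this equals \(\lambda^{\beta(r,\widetilde r)}\).

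\emph{Dispersive bound.} Estimate \eqref{eq:sch-disp-basic} gives \(\|S_A(t-\tau)\|_{L^1\to L^\infty}\le C_d|t-\tau|^{-d/2}\lesssim \lambda^{-d/2}\) on \(Q\). Together with \eqref{eq:sch-energy-basic} and Riesz--Thorin we get \(L^{r'}\to L^r\) bounds for \(r\ge2\). We then apply Cauchy--Schwarz in \(\tau\) and \(t\), each over a length \(\lambda\), to obtain
\[
 |B_{A,Q}(F,G)|\lesssim \lambda\cdot\lambda^{-d(1/2-1/r)}\|F\|_{L^2L^{r'}}\|G\|_{L^2L^{r'}} .
\]
The exponent is \(\beta(r,r)\). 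This covers the diagonal \(r=\widetilde r\) on the full range \(2\le r\le\infty\), in particular on both sides of \(2^*\).

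\emph{Interpolation, and the expected obstacle.} We interpolate bilinearly (complex interpolation of mixed-norm spaces \(L^2_tL^{s}_x\)) between the following estimates: the diagonal dispersive estimates with \(r\) slightly above and below \(2^*\), and the off-diagonal homogeneous estimates with one exponent at \(2^*\) and the other a nearby admissible exponent. Since \(\beta\) is affine in \((1/r,1/\widetilde r)\), the powers of \(\lambda\) interpolate correctly, and the constant stays \(C(d,b_0)\). The main obstacle is geometric: the available estimates must surround the point \((1/2^*,1/2^*)\) so that an entire \(\delta\)-square around it is in their convex hull. The dispersive estimates only supply the diagonal segment. The transverse directions come from the factorized Strichartz estimates, where \((r,\widetilde r)\) range over admissible exponents, i.e. \(1/r\in[1/2^*,1/2]\). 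This reaches only one side of \(2^*\) in each variable. To cover the other side, I would first try mixing the two: apply the dispersive bound with \(r\) below \(2^*\) (i.e. \(1/r>1/2^*\)), and the Strichartz bound where one exponent is \(2^*\) and the other is an admissible \(r\) with \(1/r>1/2^*\). The convex hull of \(\{(x,x)\}\) with \((1/2^*,y)\) and \((y,1/2^*)\) for \(y\ge 1/2^*\) contains a neighborhood of the diagonal point. I need to check this carefully, and possibly also include the trivial \(L^2\)-based bound, in order to fix \(\delta(d)\).
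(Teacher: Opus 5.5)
Your argument is correct. It uses the same two ingredients as the paper: factorization at an intermediate time, and the fixed-time dispersive bound, followed by interpolation and H\"older in time. The difference is in how the interpolation is organized.

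The paper interpolates, with one parameter $\theta=\theta(r,\widetilde r)$, between two estimates. The first is the pure dispersive corner $\lambda^{-d/2}\|F\|_{L^1_tL^1_x}\|G\|_{L^1_tL^1_x}$. The second is a factorized corner with admissible pairs $(q_0,r_0)$, $(\widetilde q_0,\widetilde r_0)$ chosen depending on $(r,\widetilde r)$. Only afterwards does it raise the time exponents $p_F,p_G\le2$ to $2$ by H\"older. In the $(1/r,1/\widetilde r)$ plane, this is the cone from $(0,0)$ over the admissible square $[1/2^*,1/2]^2$.

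You instead put every estimate in $L^2_t$ form first. The diagonal $\{(x,x):0\le x\le 1/2\}$ comes from Riesz--Thorin between the dispersive and $L^2$ bounds, plus Cauchy--Schwarz in time. The square comes from factorization plus H\"older. You then interpolate only the spatial exponents, using that $\beta$ is affine.

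What each buys:
\begin{itemize}
\item Your version keeps the time exponent fixed. With finitely many fixed vertices it gives the uniform constant $C(d,b_0)$ automatically, whereas the paper must keep its varying exponents in a compact admissible range.
\item The paper's version needs only the $L^1\to L^\infty$ bound, not the $L^2$ contraction, and gives an explicit exponent choice for each $(r,\widetilde r)$.
\end{itemize}

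One slip in your last paragraph should be fixed. The diagonal points you need are those with $r>2^*$, i.e.\ $1/r<1/2^*$, not $1/r>1/2^*$. Points with $1/r>1/2^*$ already lie in the admissible square. Together with $(1/2^*,y)$ and $(y,1/2^*)$, $y\ge1/2^*$, they span only a subset of $[1/2^*,\infty)^2$, which contains no neighborhood of $(1/2^*,1/2^*)$.

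To close the check, take $0<\eta\le\min\{1/d,1/2^*\}$ and the vertices
\[
(1/2^*-\eta,\,1/2^*-\eta),\qquad (1/2^*,\,1/2^*+\eta),\qquad (1/2^*+\eta,\,1/2^*).
\]
Your diagonal and square estimates cover these, since $1/2-1/2^*=1/d$. The point $(1/2^*,1/2^*)$ is their barycenter, and the triangle contains the disc of radius $\eta/\sqrt5$ about it. Two successive two-point bilinear interpolations then give the lemma with $\delta(d)=\eta/\sqrt{10}$. No additional $L^2$-based bound is needed.
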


\begin{proof}
Choose any
\(
 c\in[\sup I_\tau,\inf J_t].
\)
For \(\tau\in I_\tau\) and \(t\in J_t\), both \(c-\tau\) and \(t-c\)
are nonnegative.  Hence the semigroup law and the identity
\(S_A(\rho)^*=S_{\overline A}(\rho)\) give
\begin{align}
 B_{A,Q}(F,G)
 &\;=\int_{J_t}\int_{I_\tau}
 \bigl\langle S_A(t-c)S_A(c-\tau)F(\tau),G(t)\bigr\rangle
 \,d\tau\,dt\notag\\
 &\;=\left\langle
 \int_{I_\tau}S_A(c-\tau)F(\tau)\,d\tau,
 \int_{J_t}S_{\overline A}(t-c)G(t)\,dt
 \right\rangle _{L^2_x}.
 \label{eq:sch-factor-rectangle}
\end{align}
Let \((q_0,r_0)\) and
\((\widetilde q_0,\widetilde r_0)\) be Schr\"odinger-admissible pairs.
After the changes of variables \(\rho=c-\tau\) and \(\rho=t-c\), the dual
homogeneous estimates in Lemma~\ref{lem:sch-kernel-hom}, first for \(A\)
and then for \(\overline A\), yield
\begin{align*}
 \left\|\int_{I_\tau}S_A(c-\tau)F(\tau)\,d\tau\right\|_{L^2_x}
 &\le C(d,b_0,\widetilde q_0,\widetilde r_0)
 \|F\|_{L_t^{\widetilde q_0'}L_x^{\widetilde r_0'}(I_\tau)},\\
 \left\|\int_{J_t}S_{\overline A}(t-c)G(t)\,dt\right\|_{L^2_x}
 &\le C(d,b_0,q_0,r_0)
 \|G\|_{L_t^{q_0'}L_x^{r_0'}(J_t)}.
\end{align*}
Indeed, the first integral is the adjoint of the homogeneous map
\(\phi\mapsto S_{\overline A}(\rho)\phi\), restricted to the interval \(c-I_\tau\), while the second is the adjoint of
\(\phi\mapsto S_A(\rho)\phi\), restricted to \(J_t-c\).  Combining these
bounds with Cauchy--Schwarz in \(L^2_x\) gives the admissible corner
estimate
\begin{equation}\label{eq:sch-admissible-corner}
 |B_{A,Q}(F,G)|
 \le C
 \|F\|_{L_t^{\widetilde q_0'}L_x^{\widetilde r_0'}(I_\tau)}
 \|G\|_{L_t^{q_0'}L_x^{r_0'}(J_t)}.
\end{equation}

Because of $\lambda\lesssim \operatorname{dist}(I_\tau,J_t)\lesssim \lambda$,  \eqref{eq:sch-disp-basic} and spatial H\"older inequality, we have
\begin{align*}
 \bigl|\langle S_A(t-\tau)F(\tau),G(t)\rangle\bigr|
 &\le \|S_A(t-\tau)F(\tau)\|_{L^\infty_x}
       \|G(t)\|_{L^1_x}\\
 &\le C_d\lambda^{-\sigma}
       \|F(\tau)\|_{L^1_x}\|G(t)\|_{L^1_x}.
\end{align*}
Integration over the rectangle gives
\begin{equation}\label{eq:sch-dispersive-corner}
 |B_{A,Q}(F,G)|
 \le C_d\lambda^{-\sigma}
 \|F\|_{L^1_tL^1_x(I_\tau)}
 \|G\|_{L^1_tL^1_x(J_t)}.
\end{equation}

We now choose the interpolation parameters.  Since
\[
 \frac1{2^*}=\frac{\sigma-1}{2\sigma}<\frac12,
\]
we may fix \(\delta>0\), depending only on \(d\), so small that
\begin{equation}\label{eq:sch-delta-choice}
 0<\delta<\frac1{2^*},
 \qquad
 2\left(\frac1{2^*}+\delta\right)
 <\min\left\{1,
 1-2^* \delta\right\}.
\end{equation}
Suppose that \(r\) and \(\widetilde r\) satisfy the hypotheses of the
lemma, and set
\begin{equation}\label{eq:sch-theta-choice}
 \theta
 =\min\left\{1,
 2^*\min\{1/r,1/\widetilde r\}\right\}.
\end{equation}
The definition of \(\theta\) gives
\[
 \theta\le \frac{2^*}{r},
 \qquad
 \theta\le \frac{2^*}{\widetilde r},
\]
and hence
\begin{equation}\label{eq:sch-spatial-lower}
 \frac1{2^*}\le \frac{1/r}{\theta},
 \qquad
 \frac1{2^*}\le \frac{1/\widetilde r}{\theta}.
\end{equation}
On the other hand, \eqref{eq:sch-delta-choice} implies
\[
 \theta
 \ge \min\left\{1,
 1-2^* \delta\right\}
 >2\left(\frac1{2^*}+\delta\right)
 \ge 2\max\left\{\frac1r,\frac1{\widetilde r}\right\}.
\]
Consequently,
\begin{equation}\label{eq:sch-spatial-upper}
 \frac{1/r}{\theta}<\frac12,
 \qquad
 \frac{1/\widetilde r}{\theta}<\frac12.
\end{equation}
Define spatial exponents \(r_0\) and \(\widetilde r_0\) by
\begin{equation}\label{eq:sch-r0-choice}
 \frac1{r_0}=\frac{1/r}{\theta},
 \qquad
 \frac1{\widetilde r_0}=\frac{1/\widetilde r}{\theta},
\end{equation}
and then define \(q_0\) and \(\widetilde q_0\) through the admissibility
relations
\begin{equation}\label{eq:sch-q0-choice}
 \frac1{q_0}=\sigma\left(\frac12-\frac1{r_0}\right),
 \qquad
 \frac1{\widetilde q_0}
 =\sigma\left(\frac12-\frac1{\widetilde r_0}\right).
\end{equation}
Equations \eqref{eq:sch-spatial-lower}--\eqref{eq:sch-spatial-upper} show
that
\(
 2<r_0,\widetilde r_0\le 2^*,
 ~
 2\le q_0,\widetilde q_0<\infty.
\)
Thus both pairs in \eqref{eq:sch-q0-choice} are admissible.  Moreover, by
shrinking \(\delta\) once more if necessary, all four exponents remain in
a fixed compact subset of the admissible range.  The constants in
\eqref{eq:sch-admissible-corner} are therefore bounded by a constant
depending only on \(d\) and \(b_0\).

Interpolate the estimates \eqref{eq:sch-dispersive-corner} and
\eqref{eq:sch-admissible-corner} with parameter \(\theta\), we obtain
\begin{equation}\label{eq:sch-interpolated-rectangle}
 |B_{A,Q}(F,G)|
 \le C(d,b_0)\lambda^{-\sigma(1-\theta)}
 \|F\|_{L_t^{p_F}L_x^{\widetilde r'}(I_\tau)}
 \|G\|_{L_t^{p_G}L_x^{r'}(J_t)},
\end{equation}
where
\begin{equation}\label{eq:sch-time-interpolation}
 \frac1{p_F}
 =(1-\theta)+\frac{\theta}{\widetilde q_0'}
 =1-\frac{\theta}{\widetilde q_0},
 \qquad
 \frac1{p_G}
 =(1-\theta)+\frac{\theta}{q_0'}
 =1-\frac{\theta}{q_0}.
\end{equation}
The spatial exponents in \eqref{eq:sch-interpolated-rectangle} are exactly
the desired ones, because \eqref{eq:sch-r0-choice} gives
\[
 (1-\theta)+\frac{\theta}{\widetilde r_0'}
 =1-\frac{\theta}{\widetilde r_0}
 =1-\frac1{\widetilde r}
 =\frac1{\widetilde r'},
\]
and similarly
\[
 (1-\theta)+\frac{\theta}{r_0'}=\frac1{r'}.
\]
Since \(q_0,\widetilde q_0\ge2\) and \(0<\theta\le1\),
\(p_F,p_G\le2\).  H\"older inequality on the intervals of length
\(\lambda\) therefore yields
\begin{align}
 \|F\|_{L_t^{p_F}L_x^{\widetilde r'}(I_\tau)}
 &\le \lambda^{1/p_F-1/2}
 \|F\|_{L_t^2L_x^{\widetilde r'}(I_\tau)},\label{eq:sch-time-holder-F}\\
 \|G\|_{L_t^{p_G}L_x^{r'}(J_t)}
 &\le \lambda^{1/p_G-1/2}
 \|G\|_{L_t^2L_x^{r'}(J_t)}.
 \label{eq:sch-time-holder-G}
\end{align}
Using \eqref{eq:sch-q0-choice} and \eqref{eq:sch-r0-choice}, the two time
powers are
\begin{align*}
 \frac1{p_F}-\frac12
 &=\frac12-\frac{\theta}{\widetilde q_0}
 =\frac12-\frac{\sigma\theta}{2}
   +\frac{\sigma}{\widetilde r},\\
 \frac1{p_G}-\frac12
 &=\frac12-\frac{\theta}{q_0}
 =\frac12-\frac{\sigma\theta}{2}
   +\frac{\sigma}{r}.
\end{align*}
Substituting \eqref{eq:sch-time-holder-F}--\eqref{eq:sch-time-holder-G}
into \eqref{eq:sch-interpolated-rectangle}, the total power of \(\lambda\)
is
\begin{align*}
 &-\sigma(1-\theta)
 +\left(\frac12-\frac{\sigma\theta}{2}
        +\frac{\sigma}{\widetilde r}\right)
 +\left(\frac12-\frac{\sigma\theta}{2}
        +\frac{\sigma}{r}\right)\\
 &\qquad
 =1-\sigma+\frac{\sigma}{r}+\frac{\sigma}{\widetilde r}
 =1-\frac d2\left(1-\frac1r-\frac1{\widetilde r}\right)
 =\beta(r,\widetilde r).
\end{align*}
This proves \eqref{eq:sch-separated-estimate}. 
\end{proof}

At \(r=\widetilde r=2^*\), the exponent
\(\beta(r,\widetilde r)\) vanishes, so the preceding estimate is
scale-invariant on each separated rectangle.  A direct sum over all time
scales would not provide the decay needed at the endpoint.  The next
proposition is the main linear estimate of this subsection.  Its proof
combines a Whitney decomposition in time with a spatial atomic
decomposition.  For each pair of time and spatial scales, the exponents
in Lemma~\ref{lem:sch-separated-rectangles} are chosen on the appropriate
sides of \(2^*\).  This turns the mismatch between the two scales into a
summable factor and yields a constant that is uniform as \(a\downarrow0\).

\begin{proposition}
\label{prop:sch-double-endpoint}
Let \(A\) satisfy \eqref{eq:A-convention}.  Then
\begin{equation}\label{eq:sch-double-endpoint}
 \left\|\int_{\tau<t}S_A(t-\tau)F(\tau)\,d\tau\right\|_{L^2_tL^{2^*}_x(\R\times\R^d)}
 \le C(d,b_0)\|F\|_{L^2_tL^{(2^*)'}_x(\R\times\R^d)}.
\end{equation}
\end{proposition}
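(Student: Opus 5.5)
By duality it suffices to bound the bilinear form
\[
 B_A(F,G)=\iint_{\tau<t}\langle S_A(t-\tau)F(\tau),G(t)\rangle\,d\tau\,dt
\]
by $C(d,b_0)\|F\|_{L^2_tL^{(2^*)'}_x}\|G\|_{L^2_tL^{(2^*)'}_x}$ for $F,G$ in a dense class, for instance bounded compact support in space--time. We follow the Keel--Tao endpoint scheme, with Lemma~\ref{lem:sch-separated-rectangles} replacing the unitary $TT^*$ step, since that lemma only uses the semigroup factorization and the adjoint identity $S_A(\rho)^*=S_{\overline A}(\rho)$.

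\emph{Step 1: Whitney decomposition in time.} Decompose the region $\{\tau<t\}$ into dyadic Whitney rectangles $Q=I_\tau\times J_t$. These are pairs of dyadic intervals of common length $\lambda=2^j$, with $I_\tau$ before $J_t$ and with $\operatorname{dist}(I_\tau,J_t)\simeq\lambda$; they are disjoint up to null sets. This gives $B_A=\sum_j\sum_{Q:\,|Q|=2^j}B_{A,Q}$. At a fixed scale $2^j$, each interval $I$ is paired with only $O(1)$ intervals $J$. Hence Cauchy--Schwarz reduces matters to per-rectangle bounds, where Lemma~\ref{lem:sch-separated-rectangles} at $r=\widetilde r=2^*$ gives $\lambda^0$. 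This estimate is uniform in $A$ but not summable in $j$, so the remaining work is to produce decay across scales.

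\emph{Step 2: spatial atomic decomposition.} Following Keel--Tao, decompose at each time $F(\tau)=\sum_k c_k(\tau)f_k(\tau)$ and $G(t)=\sum_l d_l(t)g_l(t)$. Each atom $f_k(\tau)$ is supported on a set of measure $\lesssim 2^k$ with $\|f_k\|_\infty\lesssim 2^{-k/(2^*)'}$, and $\sum_k|c_k(\tau)|^{(2^*)'}\lesssim\|F(\tau)\|_{(2^*)'}^{(2^*)'}$. Since atoms have controlled measure, for any $\widetilde r$ near $2^*$ we get $\|f_k\|_{L^{\widetilde r'}}\lesssim 2^{k(1/\widetilde r'-1/(2^*)')}$, and similarly for the $g_l$. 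Apply Lemma~\ref{lem:sch-separated-rectangles} to the pair of atom pieces on $Q$, with $(r,\widetilde r)$ in the admissible $\delta$-neighbourhood of $2^*$. Writing $\epsilon_1=1/\widetilde r-1/2^*$ and $\epsilon_2=1/r-1/2^*$, we obtain
\[
 |B_{A,Q}(c_kf_k,d_lg_l)|\lesssim 2^{\frac d2 j(\epsilon_1+\epsilon_2)}\,2^{-k\epsilon_1}2^{-l\epsilon_2}\,\|c_k\|_{L^2(I_\tau)}\|d_l\|_{L^2(J_t)}.
\]
Choose the signs of $\epsilon_1,\epsilon_2$ according to the position of $(k,l)$ relative to the diagonal $k\approx l\approx \frac d2 j$. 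Taking $\epsilon_1$ on one side and $\epsilon_2$ on the other, or both on one side when both indices are on the same side, yields a factor $2^{-\epsilon(|k-\frac d2j|+|l-\frac d2j|)}$ in the two-sided-decay form used by Keel--Tao. This is the ``exponents on appropriate sides of $2^*$'' device.

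\emph{Step 3: summation.} Sum over rectangles at fixed $j$ with Cauchy--Schwarz in time. Then sum in $j,k,l$ using the geometric decay: for fixed $(k,l)$, the sum over $j$ is geometric, and Schur's test in $(k,l)$ gives a bound by $\bigl(\int\sum_k|c_k|^2\bigr)^{1/2}\bigl(\int\sum_l|d_l|^2\bigr)^{1/2}$. Since $(2^*)'<2$, we have $\ell^{(2^*)'}\subset\ell^2$, so $\sum_k|c_k(\tau)|^2\lesssim\|F(\tau)\|_{(2^*)'}^2$, which closes the estimate. All constants come from Lemma~\ref{lem:sch-separated-rectangles} and depend only on $d$ and $b_0$, so the bound is uniform as $a\downarrow0$. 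Finally, density and Fatou's lemma remove the support assumption.

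\emph{Main obstacle.} The main obstacle is Step 2–3: the Keel--Tao endpoint argument must be run while keeping the estimate genuinely two-sided in the pair $(k,l)$ and coupled to the time scale $j$. Lemma~\ref{lem:sch-separated-rectangles} only allows $r,\widetilde r$ within $\delta$ of $2^*$, and the decay obtained must be enough to dominate the triple sum. If the sign bookkeeping fails to give two-sided decay, the fallback is Keel--Tao's bilinear interpolation trick: prove the bound $T_j(F,G)$ at a fixed scale $j$ for nearby exponent pairs, then apply real bilinear interpolation between $\ell^{\beta}_\infty$ sequence spaces to sum in $j$ directly.
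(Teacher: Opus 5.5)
Your proposal is correct and follows essentially the same route as the paper. Both arguments use duality, a dyadic Whitney decomposition of $\{\tau<t\}$, and Keel--Tao spatial atoms. Both then apply Lemma~\ref{lem:sch-separated-rectangles} with $1/r,1/\widetilde r=1/2^*\pm\epsilon$, chosen independently according to the signs of $k-\tfrac d2 j$ and $l-\tfrac d2 j$, and close with Cauchy--Schwarz over squares at a fixed scale, summation in scales, and $\ell^{(2^*)'}\subset\ell^2$.

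One small correction: for fixed $(k,l)$ the sum over $j$ is not purely geometric. The $O(1+|k-l|)$ scales with $\tfrac d2 j$ between $k$ and $l$ each contribute $2^{-\epsilon|k-l|}$, so the sum is $(1+|k-l|)2^{-\epsilon|k-l|}$. This is still summable in $k-l$, so your Schur step closes exactly as in the paper.
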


\begin{proof}
By duality, it suffices to prove
\begin{equation}\label{eq:sch-bilinear-double}
 |B_A(F,G)|\le C(d,b_0)
 \|F\|_{L^2_tL^{(2^*)'}_x}\|G\|_{L^2_tL^{(2^*)'}_x},
\end{equation}
where
\[
 B_A(F,G)=\iint_{\tau<t}
 \langle S_A(t-\tau)F(\tau),G(t)\rangle\,d\tau\,dt.
\]
We begin with \(F\) and \(G\) compactly supported in time and simple as
functions of time, with bounded, compactly supported spatial values.  In
particular, they belong to \(L^1_tL^2_x\).  The \(L^2\)-contraction of
\(S_A\) implies
\[
 \iint_{\tau<t}
 \bigl|\langle S_A(t-\tau)F(\tau),G(t)\rangle\bigr|\,d\tau\,dt
 \le \|F\|_{L^1_tL^2_x}\|G\|_{L^1_tL^2_x}<\infty,
\]
so all decompositions and rearrangements made below are justified first
for this dense class.

\medskip
\noindent\emph{Step 1: Whitney decomposition of time intervals.}
Let
\(
 \Omega=\{(\tau,t)\in\R^2:\tau<t\}.
\)
A dyadic Whitney decomposition of \(\Omega\) provides a countable family
\(\mathcal Q\) of essentially disjoint dyadic squares
\(
 Q=I_Q\times J_Q\subset\Omega
\)
whose union is \(\Omega\) up to a null set and such that, if
\(\lambda_Q\) denotes the common side length of \(I_Q\) and \(J_Q\), then
\begin{equation}\label{eq:sch-whitney-geometry}
 \lambda_Q\lesssim \operatorname{dist}(I_Q,J_Q)
 \lesssim \lambda_Q.
\end{equation}
This is the standard Whitney decomposition of an open set; see
Stein~\cite{Stein1970}.  Since the
squares are dyadic, \(\lambda_Q=2^k\) for some \(k\in\mathbb Z\).  Write
\(
 \mathcal Q_k=\{Q\in\mathcal Q:\lambda_Q=2^k\}.
\)
The \eqref{eq:sch-whitney-geometry} has the following consequence: there is an absolute integer \(N_0\) such that,
for every fixed \(k\), each dyadic interval of length \(2^k\) occurs as an
\(I_Q\) for at most \(N_0\) squares in \(\mathcal Q_k\), and similarly
each interval occurs as a \(J_Q\) for at most \(N_0\) squares.  Indeed,
for a fixed \(I_Q\), every admissible \(J_Q\) has the same length and must
lie in an interval of length \(O(2^k)\) determined by
\eqref{eq:sch-whitney-geometry}; only boundedly many dyadic intervals of
length \(2^k\) can do so. Therefore
\begin{equation}\label{eq:sch-whitney-bilinear-sum}
 B_A(F,G)=\sum_{Q\in\mathcal Q}B_{A,Q}(F,G).
\end{equation}

\medskip
\noindent\emph{Step 2: spatial atom decomposition at each time.}
Put
\(
 p_*:=(2^*)'=\frac{2d}{d+2}.
\)
For almost every time, decompose
\begin{equation}\label{eq:sch-atoms}
 F(t)=\sum_{m\in\mathbb Z}a_m(t)\phi_m(t),
 \qquad
 G(t)=\sum_{n\in\mathbb Z}b_n(t)\psi_n(t),
\end{equation}
where the coefficients are nonnegative, the spatial supports of 
atoms are pairwise disjoint within each decomposition, and
\begin{align}
 |\operatorname{supp}\phi_m(t)|\le2^m,~
 \|\phi_m(t)\|_{L^\infty_x}\le2^{-m/p_*},~
 |\operatorname{supp}\psi_n(t)|\le2^n,~
 \|\psi_n(t)\|_{L^\infty_x}\le2^{-n/p_*}.
 \label{eq:sch-G-atom-size}
\end{align}
Moreover,
\begin{align}
 \sum_m a_m(t)^{p_*}\le C_{p_*}\|F(t)\|_{L^{p_*}_x}^{p_*},~
 \sum_n b_n(t)^{p_*}\le C_{p_*}\|G(t)\|_{L^{p_*}_x}^{p_*}.
 \label{eq:sch-atom-coeff-F}
\end{align}
This is the dyadic atomic decomposition used in the endpoint Strichartz estimates
of Keel--Tao~\cite{KeelTao1998}.

The coefficient functions and atoms in \eqref{eq:sch-atoms} may be chosen
measurably in time.  Indeed, after choosing a disjoint representation, write
\[
 F(t)=\sum_{\ell=1}^{L_F}\mathbf1_{E_\ell}(t)f_\ell,
\]
where the sets \(E_\ell\) are measurable and the \(f_\ell\) are the finitely
many spatial values of \(F\).  For each \(f_\ell\), fix one atomic
decomposition
\(
 f_\ell=\sum_m a_{\ell,m}\phi_{\ell,m}
\)
and use it for every \(t\in E_\ell\).  Thus on \(E_\ell\) we set
\(
 a_m(t)=a_{\ell,m}
\)
and
\(
 \phi_m(t)=\phi_{\ell,m}.
\)
These functions are piecewise constant in time and hence measurable.  The
same construction applies to \(G\), \(b_n\), and \(\psi_n\). The size and support conditions imply, for every
\(1\le\rho\le\infty\),
\begin{align}
 \|\phi_m(t)\|_{L^\rho_x}
 \le2^{m(1/\rho-1/p_*)},~
 \|\psi_n(t)\|_{L^\rho_x}
 \le2^{n(1/\rho-1/p_*)}.
 \label{eq:sch-atom-Lrho-F}
\end{align}

We first perform the remaining argument with finite sums.  Choose increasing
finite subcollections \(\mathcal Q^{(L)}\subset\mathcal Q\) such that
\(
 \bigcup_{L\ge1}\mathcal Q^{(L)}=\mathcal Q,
\)
and put
\[
 F_M(t)=\sum_{|m|\le M}a_m(t)\phi_m(t),
 \qquad
 G_M(t)=\sum_{|n|\le M}b_n(t)\psi_n(t).
\]
For \(L,M\ge1\), define
\[
 B_A^{L,M}
 :=\sum_{Q\in\mathcal Q^{(L)}}B_{A,Q}(F_M,G_M).
\]
All sums in Steps 3 and 4 are first understood with these truncations.  The
estimates below are independent of \(L\) and \(M\).

\medskip
\noindent\emph{Step 3: estimate one pair of atoms on one Whitney square.}
Let \(\delta_0>0\) be the neighborhood size supplied by
Lemma~\ref{lem:sch-separated-rectangles}, and fix
\(
 0<\delta_1<\delta_0.
\)
For \(Q\in\mathcal Q_k\), Lemma~\ref{lem:sch-separated-rectangles}, and
\eqref{eq:sch-atom-Lrho-F} imply
\begin{align}
 |B_{A,Q}(a_m\phi_m,b_n\psi_n)|
 &\le C(d,b_0)\lambda_Q^{\beta(r,\widetilde r)}
 2^{m(1/\widetilde r'-1/p_*)}
 2^{n(1/r'-1/p_*)}
 \|a_m\|_{L^2(I_Q)}\|b_n\|_{L^2(J_Q)}\notag\\
 &=C(d,b_0)\lambda_Q^{\beta(r,\widetilde r)}
 2^{m(\frac1{2^*}-1/\widetilde r)}
 2^{n(\frac1{2^*}-1/r)}
 \|a_m\|_{L^2(I_Q)}\|b_n\|_{L^2(J_Q)}.
 \label{eq:sch-atom-prechoice}
\end{align}
The factor in \eqref{eq:sch-atom-prechoice} is exactly
\begin{align}
 \lambda_Q^{\beta(r,\widetilde r)}
 2^{m(\frac1{2^*}-1/\widetilde r)}
 2^{n(\frac1{2^*}-1/r)}
 =
 \left(\frac{2^m}{\lambda_Q^\sigma}\right)^{\frac1{2^*}-1/\widetilde r}
 \left(\frac{2^n}{\lambda_Q^\sigma}\right)^{\frac1{2^*}-1/r}.
 \label{eq:sch-scale-factor}
\end{align}

We now choose \(r\) and \(\widetilde r\) separately for each triple
\((m,n,Q)\).  If \(2^m\ge\lambda_Q^\sigma\), take
\(1/\widetilde r=\frac1{2^*}+\delta_1\); if
\(2^m<\lambda_Q^\sigma\), take
\(1/\widetilde r=\frac1{2^*}-\delta_1\).  Likewise, if
\(2^n\ge\lambda_Q^\sigma\), take \(1/r=\frac1{2^*}+\delta_1\), and if
\(2^n<\lambda_Q^\sigma\), take \(1/r=\frac1{2^*}-\delta_1\).  All four possible
choices are allowed by Lemma~\ref{lem:sch-separated-rectangles}.  With
\([x]_*:=\max\{x,x^{-1}\}\), each factor in
\eqref{eq:sch-scale-factor} becomes a decaying factor, and hence
\begin{equation}\label{eq:sch-atom-rectangle}
 |B_{A,Q}(a_m\phi_m,b_n\psi_n)|
 \le C(d,b_0)
 \left[\frac{2^m}{\lambda_Q^\sigma}\right]_*^{-\delta_1}
 \left[\frac{2^n}{\lambda_Q^\sigma}\right]_*^{-\delta_1}
 \|a_m\|_{L^2(I_Q)}\|b_n\|_{L^2(J_Q)}.
\end{equation}

\medskip
\noindent\emph{Step 4: summation}
Put
\(
 \mathcal Q_k^{(L)}=\mathcal Q_k\cap\mathcal Q^{(L)}.
\)
For fixed \(k,m,n\), Cauchy--Schwarz in \(Q\) gives
\begin{align}
 \sum_{Q\in\mathcal Q_k^{(L)}}
 \|a_m\|_{L^2(I_Q)}\|b_n\|_{L^2(J_Q)}
 &\le
 \left(\sum_{Q\in\mathcal Q_k^{(L)}}\|a_m\|_{L^2(I_Q)}^2\right)^{1/2}
 \left(\sum_{Q\in\mathcal Q_k^{(L)}}\|b_n\|_{L^2(J_Q)}^2\right)^{1/2}\notag\\
 &\le N_0
 \|a_m\|_{L^2_t}\|b_n\|_{L^2_t}.
 \label{eq:sch-fixed-scale-sum}
\end{align}
Indeed,
\[
 \sum_{Q\in\mathcal Q_k^{(L)}}\|a_m\|_{L^2(I_Q)}^2
 =\int_\R |a_m(t)|^2
 \sum_{Q\in\mathcal Q_k^{(L)}}\mathbf1_{I_Q}(t)\,dt
 \le N_0\|a_m\|_{L^2_t}^2,
\]
and the estimate for \(b_n\) is identical.

Since \(\lambda_Q=2^k\), define
\(
 w_{m,n}(k)
 =2^{-\delta_1|m-\sigma k|}
  2^{-\delta_1|n-\sigma k|}.
\)
This is exactly the product of the two bracketed factors in
\eqref{eq:sch-atom-rectangle}.  We claim that
\begin{equation}\label{eq:sch-scale-convolution}
 \sum_{k\in\mathbb Z}w_{m,n}(k)
 \le C_{d,\delta_1}
 (1+|m-n|)2^{-\delta_1|m-n|}
 =:c_{m-n}.
\end{equation}
To verify this, suppose first that \(m\le n\).  If \(\sigma k\le m\), then
\[
 |m-\sigma k|+|n-\sigma k|
 =(n-m)+2(m-\sigma k),
\]
so the sum over these \(k\) is bounded by a convergent geometric series
times \(2^{-\delta_1(n-m)}\).  If
\(m\le\sigma k\le n\), then
\(
 |m-\sigma k|+|n-\sigma k|=n-m;
\)
there are at most \(C_d(1+n-m)\) such integers \(k\), and their total
contribution is bounded by
\(C_d(1+n-m)2^{-\delta_1(n-m)}\).  Finally, if \(\sigma k\ge n\), the
same geometric series argument as in the first region applies.  This
proves \eqref{eq:sch-scale-convolution} when \(m\le n\); the other case is
symmetric.  In particular,
\(
 c\in\ell^1(\mathbb Z).
\)

Combining
\eqref{eq:sch-atom-rectangle},
\eqref{eq:sch-fixed-scale-sum}, and
\eqref{eq:sch-scale-convolution} gives, uniformly in \(L\) and \(M\),
\begin{equation}\label{eq:sch-final-atom-sum}
 |B_A^{L,M}|
 \le C(d,b_0)\sum_{|m|,|n|\le M}
 c_{m-n}\|a_m\|_{L^2_t}\|b_n\|_{L^2_t}.
\end{equation}
Young's convolution inequality on \(\mathbb Z\) and Cauchy--Schwarz give
\begin{equation}\label{eq:sch-sequence-young}
 \sum_{|m|,|n|\le M}c_{m-n}\|a_m\|_{L^2_t}\|b_n\|_{L^2_t}
 \le \|c\|_{\ell^1}
 \left(\sum_{|m|\le M}\|a_m\|_{L^2_t}^2\right)^{1/2}
 \left(\sum_{|n|\le M}\|b_n\|_{L^2_t}^2\right)^{1/2}.
\end{equation}
Because \(p_*<2\), the embedding
\(\ell^{p_*}\hookrightarrow\ell^2\), Tonelli's theorem, and
\eqref{eq:sch-atom-coeff-F} imply
\begin{align*}
 \sum_m\|a_m\|_{L^2_t}^2
 &=\sum_m\int_\R a_m(t)^2\,dt
 =\int_\R\sum_m a_m(t)^2\,dt\\
 &\le\int_\R
 \left(\sum_m a_m(t)^{p_*}\right)^{2/p_*}\,dt
 \le C_{p_*}\int_\R\|F(t)\|_{L^{p_*}_x}^2\,dt.
\end{align*}
Thus
\begin{equation}\label{eq:sch-a-coefficient-bound}
 \left(\sum_m\|a_m\|_{L^2_t}^2\right)^{1/2}
 \le C_{p_*}\|F\|_{L^2_tL^{p_*}_x}.
\end{equation}
The same argument gives
\begin{equation}\label{eq:sch-b-coefficient-bound}
 \left(\sum_n\|b_n\|_{L^2_t}^2\right)^{1/2}
 \le C_{p_*}\|G\|_{L^2_tL^{p_*}_x}.
\end{equation}
It follows from \eqref{eq:sch-final-atom-sum}--
\eqref{eq:sch-b-coefficient-bound} that
\begin{equation}\label{eq:sch-finite-truncation-bound}
 |B_A^{L,M}|
 \le C(d,b_0)
 \|F\|_{L^2_tL^{p_*}_x}\|G\|_{L^2_tL^{p_*}_x},
\end{equation}
with a constant independent of \(L\) and \(M\).

We now pass to the full decompositions.  For fixed \(M\), the absolute
integrability established at the beginning of the proof, which also applies
to \(F_M\) and \(G_M\), allows
\(\mathcal Q^{(L)}\) to exhaust \(\mathcal Q\), so that
\[
 B_A^{L,M}\longrightarrow B_A(F_M,G_M)
 \qquad\text{as }L\to\infty.
\]
For the chosen dense class, each spatial value is bounded and compactly
supported.  Hence the finite atomic sums converge to the corresponding
spatial values in both \(L^{p_*}_x\) and \(L^2_x\).  Since only finitely many
spatial values occur in time, it follows that
\[
 F_M\longrightarrow F,\qquad G_M\longrightarrow G
 \quad\text{in }L^2_tL^{p_*}_x\cap L^1_tL^2_x.
\]
The \(L^2\)-contraction of \(S_A\) and the absolute integrability of the
bilinear form therefore give
\(
 B_A(F_M,G_M)\to B_A(F,G).
\)
Moreover, \(c\in\ell^1(\mathbb Z)\), together with
\eqref{eq:sch-a-coefficient-bound} and
\eqref{eq:sch-b-coefficient-bound}, shows that the atomic scale sums converge
absolutely.  We may thus first let \(L\to\infty\) and then \(M\to\infty\) in
\eqref{eq:sch-finite-truncation-bound}, obtaining
\eqref{eq:sch-bilinear-double} on the dense class.

Finally, compactly time-supported simple functions with bounded,
compactly supported spatial values are dense in
\(L^2_tL^{(2^*)'}_x\).  Hence \eqref{eq:sch-bilinear-double} extends by
density to arbitrary \(F,G\in L^2_tL^{(2^*)'}_x\).  For fixed \(F\), the
bilinear form defines a bounded linear functional on
\(L^2_tL^{(2^*)'}_x\); since the dual of this space is \(L^2_tL^{2^*}_x\), it is
represented by a unique element of \(L^2_tL^{2^*}_x\), which agrees with the
retarded Duhamel integral on the dense class.  This is precisely
\eqref{eq:sch-double-endpoint}.
\end{proof}

Proposition~\ref{prop:sch-double-endpoint} controls precisely the
retarded map from \(L_t^2L_x^{(2^*)'}\) to \(L_t^2L_x^{2^*}\).
Interpolating this endpoint bound with the fixed-time \(L^2\) bound gives
every admissible output pair for the same input space.  A second
interpolation with the \(L_t^1L_x^2\) estimate then gives the full range
of admissible input pairs.

\begin{lemma}\label{lem:sch-retarded-admissible}
Let
\[
 (T_AG)(t)=\int_{\tau<t}S_A(t-\tau)G(\tau)\,d\tau.
\]
For every admissible output pair \((q,r)\) and every admissible input pair
\((\widetilde q,\widetilde r)\),
\begin{equation}\label{eq:sch-retarded-all}
 \|T_AG\|_{L^q_tL^r_x}
 \le C(d,b_0,q,r,\widetilde q,\widetilde r)
 \|G\|_{L_t^{\widetilde q'}L_x^{\widetilde r'}}.
\end{equation}
The constant is uniform in \(A\) satisfying \eqref{eq:A-convention}.
\end{lemma}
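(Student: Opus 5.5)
The plan follows the route sketched just before the statement: two interpolations, starting from the double endpoint of Proposition~\ref{prop:sch-double-endpoint}. The only other ingredients are the energy-type retarded bounds coming from Lemma~\ref{lem:sch-kernel-hom}.

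\emph{Energy-type corners.} First I would record the retarded estimates whose output is \(L^\infty_tL^2_x\) or whose input is \(L^1_tL^2_x\).
\begin{itemize}
\item[(i)] For \(G\in L^1_tL^2_x\) and any admissible \((q,r)\), Minkowski's inequality gives
\[
\|T_AG\|_{L^q_tL^r_x}\le\int\bigl\|\mathbf 1_{t>\tau}S_A(t-\tau)G(\tau)\bigr\|_{L^q_tL^r_x}\,d\tau .
\]
The homogeneous estimate \eqref{eq:sch-homogeneous}, applied on \((\tau,\infty)\), then bounds this by \(C\|G\|_{L^1_tL^2_x}\). This is the case \((\widetilde q,\widetilde r)=(\infty,2)\).
\item[(ii)] For fixed \(t\), write \(T_AG(t)=S_A(t-c)\int_{\tau<t}S_A(c-\tau)\ldots\) is not available, since \(t\) varies. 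Instead use \(T_AG(t)=\int\mathbf 1_{\tau<t}S_A(t-\tau)G(\tau)\,d\tau\), and note that this is the adjoint of \(\phi\mapsto\mathbf 1_{\tau<t}S_{\overline A}(t-\tau)\phi\), viewed as a map of \(\tau\). The dual homogeneous estimate for \(\overline A\) on the half-line \(\tau<t\) gives
\[
\|T_AG(t)\|_2\le C\|G\|_{L^{\widetilde q'}_tL^{\widetilde r'}_x}
\]
uniformly in \(t\), for every admissible \((\widetilde q,\widetilde r)\). This is the case \((q,r)=(\infty,2)\). It needs only the uniform dual estimate already obtained from Keel--Tao in Lemma~\ref{lem:sch-kernel-hom}, which holds for \(\overline A\) as well.
\end{itemize}

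\emph{First interpolation.} Fix the input space \(L^2_tL^{(2^*)'}_x\). Proposition~\ref{prop:sch-double-endpoint} gives output \(L^2_tL^{2^*}_x\), and (ii) gives output \(L^\infty_tL^2_x\). Complex (or real) interpolation of the single linear operator \(T_AG\) between these two outputs yields every mixed-norm space \(L^q_tL^r_x\) with
\[
\tfrac1q=\tfrac\theta2,\qquad \tfrac1r=\tfrac{1-\theta}2+\tfrac\theta{2^*},
\]
which are exactly the admissible pairs. This gives \eqref{eq:sch-retarded-all} for input \((2,2^*)\) and every admissible output.

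\emph{Second interpolation.} Now fix an admissible output \((q,r)\). The operator is bounded from \(L^1_tL^2_x\) by (i) and from \(L^2_tL^{(2^*)'}_x\) by the previous step. Interpolating the input spaces gives all \(L^{\widetilde q'}_tL^{\widetilde r'}_x\) with \((\widetilde q,\widetilde r)\) admissible, because the dual exponents lie on the segment between \((1,2)\) and \((2,(2^*)')\). For \(d\ge3\) this segment covers the full admissible range, and all constants remain uniform in \(a\).

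\emph{Main obstacle.} The step needing most care is the justification of the interpolation for a retarded, truncated operator whose constants must be uniform in \(a\). Here the interpolation is applied to the fixed operator \(T_A\) for each given \(A\), so no Christ--Kiselev argument is needed and uniformity is inherited from the endpoint bounds. One must still check that \(T_AG\) is well defined on a common dense class. Compactly supported simple functions with values in \(L^1_x\cap L^2_x\) work: on this class all the norms make sense, and density then extends each bound. The endpoint constants depend on \(q,r\) only through interpolation, which gives the stated dependence \(C(d,b_0,q,r,\widetilde q,\widetilde r)\).
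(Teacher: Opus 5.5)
Your proposal is correct and follows the paper's proof essentially step for step: the double endpoint of Proposition~\ref{prop:sch-double-endpoint} and the fixed-time dual homogeneous bound give every admissible output for input $L^2_tL^{(2^*)'}_x$, Minkowski's inequality with the homogeneous estimate handles input $L^1_tL^2_x$, and interpolating these two domain estimates with $\eta=2/\widetilde q$ gives all admissible inputs. One minor remark: with the input fixed, your first interpolation is just H\"older's inequality in the form \eqref{eq:aux-interp}, so abstract interpolation is not needed at that step.
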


\begin{proof}
Proposition \ref{prop:sch-double-endpoint} gives
\(
 T_A:L^2_tL^{(2^*)'}_x\to L^2_tL^{2^*}_x.
\)
For each fixed \(t\), the estimate from
Lemma~\ref{lem:sch-kernel-hom}, after the change of variables
\(\rho=t-\tau\), gives
\(
 \|T_AG(t)\|_2\lesssim \|G\|_{L^2_tL^{(2^*)'}_x}.
\)
Interpolating these two estimates gives
\begin{equation}\label{eq:sch-retarded-from-endpoint}
 \|T_AG\|_{L^q_tL^r_x}
 \lesssim \|G\|_{L^2_tL^{(2^*)'}_x}
\end{equation}
for every admissible output pair \((q,r)\).  On the other hand, Minkowski
and the homogeneous estimate give
\begin{equation}\label{eq:sch-retarded-L1L2}
 \|T_AG\|_{L^q_tL^r_x}
 \lesssim \|G\|_{L^1_tL^2_x}
\end{equation}
for every admissible output pair.  Interpolating the two domain estimates
\eqref{eq:sch-retarded-from-endpoint} and \eqref{eq:sch-retarded-L1L2}
with parameter \(\eta=2/\widetilde q\) yields
\[
 \frac1{\widetilde q'}=1-\frac\eta2,
 \qquad
 \frac1{\widetilde r'}=\frac{1-\eta}{2}+\frac\eta{(2^*)'}.
\]
This is equivalent to the admissibility of \((\widetilde q,\widetilde r)\),
and mixed-norm interpolation gives \eqref{eq:sch-retarded-all}.
\end{proof}

The preceding estimates can now be assembled in the form needed for the
nonlinear argument.  The forcing is split according to the two summands
of \(\cN^1\).  The \(L_t^1H_x^1\) part is handled by the homogeneous
estimate and Minkowski's inequality, while the
\(L_t^2W_x^{1,(2^*)'}\) part is controlled by the retarded estimates above.
Since spatial derivatives commute with \(S_A(t)\), these bounds give all
derivative components of \(\cX^1\); the remaining critical spacetime norm
follows from Sobolev embedding.

\begin{proposition}
       \label{prop:sch-linear}
       Let \(I=[t_0,t_1]\), let \(A\) satisfy \eqref{eq:A-convention}, and let
       \(0\le r\le r_0\).  If
       \[
        \partial_tz-A\Delta z+rz=F,
        \qquad z(t_0)=z_0\in H^1(\R^d),
       \]
       then
       \begin{equation}\label{eq:sch-linear}
        \|z\|_{\cX^1(I)}
        \le C(d,b_0)
        \bigl(\|z_0\|_{H^1}+\|F\|_{\cN^1(I)}\bigr).
       \end{equation}
       The constant is uniform in \(a\ge0\) and hence remains bounded as
       \(A\to-i\).
       \end{proposition}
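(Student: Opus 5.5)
We write the solution in Duhamel form with the damped semigroup \(S_{A,r}(t)=e^{-rt}S_A(t)\). Then
\[
 z(t)=S_{A,r}(t-t_0)z_0+\int_{t_0}^tS_{A,r}(t-\tau)F(\tau)\,d\tau ,
\]
and we split \(F=F_1+F_2\) with \(F_1\in L_t^1H_x^1(I)\) and \(F_2\in L_t^2W_x^{1,(2^*)'}(I)\). Since \(D^k\) commutes with \(S_A\), it suffices to bound \(D^kz\), \(k=0,1\), in \(L_t^\infty L_x^2\cap L_t^2L_x^{2^*}\cap L_t^{q_c}L_x^{r_c}\). The \(L_{t,x}^{p_c}\) component then follows from \eqref{eq:sob-pc}. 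At the end we take the infimum over all decompositions of \(F\).

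\emph{Damping factor.} We first set \(r=0\) and afterwards remove the restriction. For \(r>0\) we cannot simply bound \(e^{-r(t-\tau)}\le1\) inside a mixed-norm operator. Instead we note that \(\tilde z(t)=e^{r(t-t_0)}z(t)\) solves the undamped equation with forcing \(e^{r(t-t_0)}F\). The weight \(e^{r(t-t_0)}\) is bounded by \(e^{r_0|I|}\), and this would make the constant depend on \(|I|\), which the statement does not permit.

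The better route is to treat \(-rz\) as an extra \(L_t^1H_x^1\) forcing term and absorb it on short subintervals. That again gives a dependence on \(r_0|I|\).

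So we argue directly. The multiplier \(e^{-r(t-\tau)}\) with \(\tau<t\) is a positive kernel \(\le1\), and the estimates only use:
\begin{itemize}
\item the dispersive bound \eqref{eq:sch-disp-basic} and the product bound \eqref{eq:sch-product-disp}: \(S_{A,r}(t)S_{A,r}(s)^*=e^{-r(t+s)}e^{D_{t,s}\Delta}\) still obeys \eqref{eq:sch-product-disp};
\item the \(L^2\) contraction, which is preserved;
\item the semigroup factorization used in Lemma~\ref{lem:sch-separated-rectangles}, which still holds because \(e^{-r(t-\tau)}=e^{-r(t-c)}e^{-r(c-\tau)}\).
\end{itemize}
Hence Lemma~\ref{lem:sch-kernel-hom}, Lemma~\ref{lem:sch-separated-rectangles}, Proposition~\ref{prop:sch-double-endpoint}, and Lemma~\ref{lem:sch-retarded-admissible} hold verbatim for \(S_{A,r}\), with constants independent of \(r\ge0\).

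\emph{The three pieces.}
\begin{itemize}
\item \emph{Free term.} \eqref{eq:sch-homogeneous} with the admissible pairs \((\infty,2)\), \((2,2^*)\), \((q_c,r_c)\) gives \(\|D^kS_{A,r}(\cdot-t_0)z_0\|\lesssim\|z_0\|_{H^1}\).
\item \emph{\(F_1\) term.} By Minkowski's inequality we write the integral as \(\int_I\mathbf 1_{\tau<t}S_{A,r}(t-\tau)D^kF_1(\tau)\,d\tau\). The homogeneous estimate applied at each fixed \(\tau\) (restriction to \(t>\tau\) only decreases norms) bounds it by \(\|D^kF_1\|_{L_t^1L_x^2}\). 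This is the estimate \eqref{eq:sch-retarded-L1L2}.
\item \emph{\(F_2\) term.} Extend \(D^kF_2\) by zero outside \(I\). Lemma~\ref{lem:sch-retarded-admissible} with input pair \((\widetilde q,\widetilde r)=(2,2^*)\) and the three output pairs gives the bound \(\|D^kF_2\|_{L_t^2L_x^{(2^*)'}}\), uniformly in \(a\).
\end{itemize}

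\emph{Identification of the solution.} We need to know that \(z\) coincides with the Duhamel formula. For \(a>0\) this follows from uniqueness via the parabolic energy estimate, as in Proposition~\ref{prop:heat-linear}. For \(a=0\) we use the standard uniqueness of distributional solutions in \(C_tH^{-1}\) for the Schr\"odinger equation.

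\emph{Main obstacle.} The only delicate point is the \(F_2\) term at the double endpoint, and it has already been handled by Proposition~\ref{prop:sch-double-endpoint}. The remaining care is in checking that the damping factor does not disturb the bilinear Whitney argument. The factorization through an intermediate time \(c\) survives because the damping is itself a semigroup, so the proof carries over.
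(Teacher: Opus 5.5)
Your proposal is correct and follows essentially the same route as the paper. You split the Duhamel formula into the free term, the $L_t^1H_x^1$ piece (handled by Minkowski and the homogeneous estimate), and the $L_t^2W_x^{1,(2^*)'}$ piece (handled by Lemma~\ref{lem:sch-retarded-admissible} with input pair $(2,2^*)$), and you treat the damping by rerunning the linear theory for $S_{A,r}$ using its semigroup property and $e^{-rt}\le1$, exactly as the paper does. Your extra remarks, on why conjugating by $e^{r(t-t_0)}$ or perturbing in $-rz$ would bring in a dependence on $r_0|I|$, and your explicit identification of $z$ with its Duhamel formula, are sound additions that the paper leaves implicit.
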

       
       \begin{proof}
       We first assume \(r=0\) and write \(F=F_1+F_2\), where
       \(
        F_1\in L^1_tH^1_x(I),~ F_2\in L^2_tW^{1,(2^*)'}_x(I).
       \)
       Duhamel's formula gives
       \[
        z(t)=S_A(t-t_0)z_0+
        \int_{t_0}^tS_A(t-\tau)(F_1+F_2)(\tau)\,d\tau.
       \]
       The three pairs
       \(
        (\infty,2),~ (2,2^*),~(q_c,r_c)
       \)
       are admissible.  The
       \(L^{p_c}_{t,x}\) component of \(\cX^1\) is obtained instead from
       \eqref{eq:sob-pc}.
       
       Apply the homogeneous estimate \eqref{eq:sch-homogeneous} to the initial
       term, the estimate \eqref{eq:sch-retarded-L1L2} to \(F_1\), and
       the estimate \eqref{eq:sch-retarded-all} with input pair \((2,2^*)\)
       to \(F_2\), before and after one spatial derivative.  Constant-coefficient
       spatial derivatives commute with \(S_A(t)\).  The admissible output pairs
       \((\infty,2)\), \((2,2^*)\), and \((q_c,r_c)\) therefore control the
       \(L^\infty_tH^1_x\), \(L^2_tW^{1,2^*}_x\), and
       \(L^{q_c}_tL^{r_c}_x\) gradient components of \(\cX^1\).  The remaining
       \(L^{p_c}_{t,x}\) component follows from \eqref{eq:sob-pc}.  Hence
       \[
        \|z\|_{\cX^1(I)}
        \le C(d,b_0)
        \left(\|z_0\|_{H^1}
        +\|F_1\|_{L^1_tH^1_x(I)}
        +\|F_2\|_{L^2_tW^{1,(2^*)'}_x(I)}\right).
       \]
       Taking the infimum over all decompositions \(F=F_1+F_2\) gives the
       estimate for \(r=0\) with the \(\cN^1\) norm.
       
       For \(0\le r\le r_0\), write
       \(
        S_{A,r}(t)=e^{-rt}S_A(t).
       \)
       Since \(e^{-rt}\le1\) for \(t\ge0\), the \(L^2\) and dispersive
       estimates used above remain valid for \(S_{A,r}\), with the same
       constants.  Moreover,
       \[
        S_{A,r}(t)S_{A,r}(s)^*
        =e^{-r(t+s)}S_A(t)S_A(s)^*,
       \]
       so the homogeneous estimates are unchanged.  The separated-interval
       argument and the retarded estimates also apply with \(S_A\) replaced by
       \(S_{A,r}\), because the extra scalar factor is bounded by one and
       \(S_{A,r}\) has the same semigroup property.  Repeating the preceding
       Duhamel estimates therefore gives
       \[
        \|z\|_{\cX^1(I)}
        \le C(d,b_0)
        \bigl(\|z_0\|_{H^1}+\|F\|_{\cN^1(I)}\bigr),
       \]
       which is \eqref{eq:sch-linear}.
\end{proof}

\begin{remark}
If \(A_\eps\to-i\), \(|A_\eps|=1\), and \(\Re A_\eps\ge0\), then writing
\(A_\eps=a_\eps-ib_\eps\) gives \(b_\eps\to1\).  Hence \(b_\eps\ge1/2\) for
small \(\eps\).  If also \(r_\eps\to0\), Proposition~\ref{prop:sch-linear}
with \(b_0=1/2\) and \(r_0=1\) is uniform on every fixed finite interval.
\end{remark}

\subsection{Uniform nonlinear stability}

All dependence on the possibly vanishing dissipative part has now been
isolated in Proposition~\ref{prop:sch-linear}.  The nonlinear estimate in
Lemma~\ref{lem:nonlinear-lip} is independent of the coefficient regime.
We may therefore repeat the zero-dispersion perturbation argument.

\begin{proposition}[Inviscid-limit stability]\label{prop:sch-stability}
Fix $b_0>0$, $C_0\ge1$, $r_0\ge0$, $M>0$, and a finite interval $I$.
Let $A$ satisfy \eqref{eq:A-convention}, $|C|\le C_0$, and
$0\le r\le r_0$.  Suppose
\[
 U\in C(I;H^1)\cap\cX^1(I),\qquad \|U\|_{\cX^1(I)}\le M,
\]
and
\[
 \partial_tU-A\Delta U+\mu C f(U)+rU=e,
 \qquad e\in\cN^1(I).
\]
There exist
\[
 \varepsilon_*=\varepsilon_*(d,b_0,C_0,r_0,|I|,M)>0,
 \quad K_*=K_*(d,b_0,C_0,r_0,|I|,M)
\]
such that, if
\[
 \|v_0-U(t_0)\|_{H^1}+\|e\|_{\cN^1(I)}\le\varepsilon_*,
\]
the exact equation has a unique solution
$v\in C(I;H^1)\cap\cX^1(I)$ and
\begin{equation}\label{eq:sch-stability}
 \|v-U\|_{\cX^1(I)}
 \le K_*\bigl(\|v_0-U(t_0)\|_{H^1}+\|e\|_{\cN^1(I)}\bigr).
\end{equation}
\end{proposition}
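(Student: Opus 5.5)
The proof repeats the argument of Proposition~\ref{prop:heat-stability} verbatim. The only change is that the uniformly parabolic linear estimate of Proposition~\ref{prop:heat-linear} is replaced by the coefficient-uniform linear estimate of Proposition~\ref{prop:sch-linear}. Let \(K=C(d,b_0)\) be the constant in \eqref{eq:sch-linear}. It is valid on every subinterval \(J\subset I\), uniformly in \(a\ge0\) and in \(0\le r\le r_0\). Choose \(\delta>0\) exactly as in \eqref{eq:heat-stability-delta} with this \(K\), so that \(\delta\) depends only on \(d,b_0,C_0\).

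On an interval \(J=[\tau_0,\tau_1]\subset I\) with \(\|U\|_{\cZ^1(J)}\le\delta\), write \(w=v-U\). This difference satisfies \eqref{eq:heat-stability-difference}. Define \(\Phi_J\) through the Duhamel formula with the semigroup \(S_{A,r}(t)=e^{-rt}e^{tA\Delta}\). The forcing \(-e\) is kept in its \(\cN^1\) sum decomposition. The nonlinear difference \(f(U+w)-f(U)\) lies in \(L_t^2W_x^{1,(2^*)'}(J)\) by Lemma~\ref{lem:nonlinear-lip}, which is coefficient-independent. Proposition~\ref{prop:sch-linear} therefore applies to every term. The self-map bound on the ball of radius \(R_J=2KE_J\le\delta\) and the contraction estimate are then identical to the heat case. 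They give a unique fixed point with \(\|w\|_{\cX^1(J)}\le 2KE_J\).

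Next, partition \(I\) into at most \(N_0(d,M,\delta)\) intervals on which \(\|U\|_{\cZ^1}\le\delta\). Set \(b_j=\|e\|_{\cN^1(J_j)}\) and \(d_j=\|w(t_j)\|_{H^1}\). Restricting a near-optimal decomposition of \(e\) and applying Cauchy--Schwarz to the \(L_t^2\) summand gives \(\sum_j b_j\le\sqrt{N_0}\|e\|_{\cN^1(I)}\). The recursion \(d_{j+1}\le L(d_j+b_j)\) holds since \(\|w(t_{j+1})\|_{H^1}\le\|w\|_{\cX^1(J_j)}\). With the same choice of \(\varepsilon_*\), this keeps \(2K(d_j+b_j)\le\delta\) at every step. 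Summing the local bounds yields \eqref{eq:sch-stability}. Uniqueness in \(C(I;H^1)\cap\cX^1(I)\) follows by the same subdivision applied to the difference of two solutions. One also needs \(v\in C(I;H^1)\). This holds because each Duhamel term is continuous into \(H^1\): the homogeneous part by strong continuity of \(S_{A,r}\) on \(H^1\), and the retarded parts by the usual density argument, using the uniform bounds of Lemma~\ref{lem:sch-retarded-admissible}.

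The main point to check is that nothing in the heat argument used parabolic smoothing beyond the single linear estimate. One must verify that Proposition~\ref{prop:sch-linear} holds on an arbitrary subinterval with initial time \(\tau_0\) and a constant independent of \(|J|\). This follows by time translation, since the estimates of Lemmas~\ref{lem:sch-kernel-hom} and~\ref{lem:sch-retarded-admissible} are posed on \(\R\) and restrict to \(J\) after extending the forcing by zero. With that verified, all constants depend only on \(d,b_0,C_0,r_0,|I|,M\), as claimed.
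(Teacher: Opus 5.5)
Your proposal is correct and follows the paper's proof essentially verbatim. The paper also reruns the heat-case contraction on $\cZ^1$-small subintervals with Proposition~\ref{prop:sch-linear} as the only linear input, and then reuses the same ingredients:
\begin{itemize}
\item the partition of $I$ into at most $N_0$ pieces;
\item the bound $\sum_j b_j\le\sqrt{N_0}\|e\|_{\cN^1(I)}$;
\item the recursion $d_{j+1}\le L(d_j+b_j)$;
\item the choice $\varepsilon_*\le\delta/(4KL^{N_0}\sqrt{N_0})$.
\end{itemize}
Your extra checks are sound details that the paper leaves implicit: continuity of $v$ into $H^1$ (via density and the uniform retarded bounds), and subinterval-uniformity of the linear constant by time translation.
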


\begin{proof}
Let $K=C(d,b_0)$.  Lemma~\ref{lem:nonlinear-lip} is
independent of the coefficient regime.  Choose $\delta\le1$ so that
$KC_0C_d\delta^p\le1/16$.  On a subinterval
$J=[\tau_0,\tau_1]$ with
$\|U\|_{\cZ^1(J)}\le\delta$, Proposition~\ref{prop:sch-linear} and
Lemma~\ref{lem:nonlinear-lip} control the map
\begin{align*}
 \Phi_J(w)(t)={}&e^{-r(t-\tau_0)}S_A(t-\tau_0)w(\tau_0)
 -\int_{\tau_0}^te^{-r(t-s)}S_A(t-s)e(s)\,ds\\
 &-\mu C\int_{\tau_0}^te^{-r(t-s)}S_A(t-s)
 \{f(U+w)-f(U)\}(s)\,ds.
\end{align*}
Put $E_J=\|w(\tau_0)\|_{H^1}+\|e\|_{\cN^1(J)}$ and consider the
closed ball
\(
 B_J=\{w:\|w\|_{\cX^1(J)}\le2KE_J\}.
\)
If $2KE_J\le\delta$, then
$\|U+w\|_{\cZ^1(J)}\le2\delta$ on this ball, and
\begin{align*}
 \|\Phi_J(w)\|_{\cX^1(J)}
 &\le KE_J+KC_0C_d\bigl(\delta^p+(2\delta)^p\bigr)2KE_J,\\
 \|\Phi_J(w_1)-\Phi_J(w_2)\|_{\cX^1(J)}
 &\le KC_0C_d\bigl(\delta^p+(2\delta)^p\bigr)
 \|w_1-w_2\|_{\cX^1(J)}.
\end{align*}
After the fixed dimensional adjustment in the choice of $\delta$, the
first expression is at most $2KE_J$ and the second coefficient is at
most $1/2$.  Banach's theorem therefore produces the local solution and
the uniform bound
\begin{equation}\label{eq:sch-local-stability}
 \|w\|_{\cX^1(J)}\le2KE_J.
\end{equation}
The constants are independent of $a\ge0$ because the only linear input
is Proposition~\ref{prop:sch-linear}.

To pass to \(I\), use the same absolute-continuity partition as in the
proof of Proposition~\ref{prop:heat-stability}.  Thus \(I\) is the
union of \(N\le N_0\) consecutive intervals \(J_j=[t_j,t_{j+1}]\) on
which \(\|U\|_{\cZ^1(J_j)}\le\delta\), where
\[
 N_0=\left\lceil
 1+(2M/\delta)^{p_c}+(2M/\delta)^{q_c}
 \right\rceil.
\]

Set \(b_j=\|e\|_{\cN^1(J_j)}\).  The Banach-sum definition of
\(\cN^1(I)\), together with Cauchy--Schwarz in time, gives
\[
 \sum_{j=0}^{N-1}b_j\le\sqrt{N_0}\,\|e\|_{\cN^1(I)}.
\]

Put \(d_j=\|w(t_j)\|_{H^1}\) and \(L=\max\{2K,1\}\).  As long as the
local smallness condition holds, \eqref{eq:sch-local-stability} gives
\[
 d_{j+1}\le L(d_j+b_j),\qquad
 d_j\le L^{N_0}\left(d_0+\sum_{\ell=0}^{N-1}b_\ell\right).
\]
It follows that \(d_j+b_j\le2L^{N_0}\sqrt{N_0}
(d_0+\|e\|_{\cN^1(I)})\).  We may therefore choose
\[
 \varepsilon_*\le\frac{\delta}{4KL^{N_0}\sqrt{N_0}},
\]
which ensures \(2K(d_j+b_j)\le\delta\) at every step.  The local
construction then extends successively across \(I\).

Summing \eqref{eq:sch-local-stability} over the partition gives
\[
 \|w\|_{\cX^1(I)}
 \le2K\sqrt{N_0}\bigl(N_0L^{N_0}+1\bigr)
 \bigl(d_0+\|e\|_{\cN^1(I)}\bigr).
\]
This proves \eqref{eq:sch-stability}, with \(K_*\) depending only on
\(d,b_0,C_0,r_0,|I|,M\).  Applying the same subdivision to the
difference of two solutions proves uniqueness in
\(C(I;H^1)\cap\cX^1(I)\).
\end{proof}

\subsection{Frequency truncation and residual estimates}
\label{subsec:sch-H1-limit}

Let \(v\) solve the energy-critical NLS
\begin{equation}\label{eq:nls}
 i\partial_tv-\Delta v+\mu f(v)=0,
 \qquad v(0)=v_0.
\end{equation}
The standard local \(H^1\) theory gives, on every compact interval
\(I\) inside the maximal \(H^1\) lifespan,
\begin{equation}\label{eq:nls-standard}
 v\in C(I;H^1)\cap\cX^1(I).
\end{equation}
See, for instance,
\cite{CazenaveWeissler1990,Tao2006}; the endpoint Strichartz
component used in the definition of \(\cX^1\) is covered by the
standard endpoint theory recalled in Section~\ref{sec:prelim}.

\begin{lemma}
\label{lem:nls-H3-persistence}
Let \(3\le d\le6\), let \(\mu\in\{-1,1\}\), and let \(v\) be the
maximal \(H^1\) solution of \eqref{eq:nls}.  If
\(v_0\in H^3(\R^d)\), then
\(
 v\in C([0,T];H^3(\R^d))
\)
for every \(0<T<T_{\max}(v_0)\).
\end{lemma}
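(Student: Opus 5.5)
Fix \(0<T<T_{\max}(v_0)\) and \(I=[0,T]\), so that by \eqref{eq:nls-standard} we have \(M:=\|v\|_{\cX^1(I)}<\infty\); in particular \(\|v\|_{\cZ^1(I)}<\infty\). The plan is a persistence-of-regularity argument in which the critical norm is made small on finitely many subintervals, exactly as in the stability propositions. The \(H^3\) estimate is a linear estimate in the highest derivatives, with coefficients given by the critical norms of \(v\). Apply \(\nabla^2\) to the Duhamel formula. The linear estimate of Proposition~\ref{prop:sch-linear} at \(A=-i\), \(r=0\) (the classical Strichartz case, \(a=0\)) gives, on any \(J=[\tau_0,\tau_1]\subset I\),
\[
 \|\nabla^2 v\|_{\cX^1(J)}\le K\bigl(\|\nabla^2 v(\tau_0)\|_{H^1}+\|\nabla^2 f(v)\|_{L_t^2W_x^{1,(2^*)'}(J)}\bigr).
\]
The \(L^2\) component is also included via \(\|v\|_{\cX^1(J)}\). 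To keep everything rigorous, I would run the argument on the \(H^3\) local solution: the \(H^3\) Cauchy theory is subcritical for \(3\le d\le6\) when \(p\ge1\), because \(f\) is \(C^3\) for \(d\le4\) and \(f\) has limited smoothness for \(d=5,6\). This local solution coincides with \(v\) by \(H^1\) uniqueness and exists as long as \(\|v(t)\|_{H^3}\) stays bounded. Alternatively, one can use difference quotients as in Proposition~\ref{prop:heat-residual}, with \(\nabla^2\) replaced by \(\delta_{j,h}^2\).

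The key nonlinear step is a tame estimate: schematically,
\[
 \|\nabla^3 f(v)\|_{L_t^2L_x^{(2^*)'}(J)}\lesssim \|v\|_{\cZ^1(J)}^{p}\,\|\nabla^3 v\|_{L_t^{q_c}L_x^{r_c}(J)}+\text{lower-order terms}.
\]
The principal term \(|v|^p\nabla^3 v\) is handled by the Hölder identities \eqref{eq:critical-identities}, exactly as in Lemma~\ref{lem:nonlinear-lip}. The mixed terms such as \(|v|^{p-1}\nabla v\,\nabla^2 v\), and \(|v|^{p-2}(\nabla v)^3\) when \(p\ge2\), are bounded by the fractional chain rule or Gagliardo--Nirenberg interpolation between \(\|v\|_{L^{p_c}_{t,x}}\) and \(\|\nabla^3 v\|_{L^{q_c}_tL^{r_c}_x}\). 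This is the same mechanism as the discrete estimate used in Proposition~\ref{prop:heat-residual}. Each resulting term is a product of \(\cZ^1\)-type norms to the power \(p\) times one factor of the top-order Strichartz norm.

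The main obstacle is \(d=5,6\), where \(p=4/3\) or \(p=1\). There \(f\) is only \(C^{1,p-1}\), or is quadratic when \(p=1\), so three derivatives of \(f(v)\) cannot be taken classically. For \(d=6\), \(f(u)=|u|u\) has a second derivative that is only bounded, not Lipschitz, at \(u=0\). I would therefore avoid \(\nabla^3 f\) altogether and work at the level \(\nabla^2\) in \(L^2_tW^{1,(2^*)'}\) through the difference-quotient form. That is, I estimate \(\delta_{j,h}\nabla^2\) of \(f(v)\) using the discrete chain rule with the factors \(M_{j,h}(v)^{p-1}\). Alternatively, I would use a fractional Leibniz/chain rule (Christ--Weinstein, Visan) for \(|\nabla|^{3}\) applied to a \(C^{1,p-1}\) function with \(p\ge1\), which is valid provided \(3<1+p+\) (one derivative beyond), and that needs checking. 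If this fails, I would use the Lipschitz bound on \(Df\) together with \(H^2\) control of \(v\) obtained first by the same scheme at one lower level, and then bootstrap from \(H^2\) to \(H^3\). At the \(H^3\) step, the \(H^2\) bound gives \(v\in L^\infty_t\) in low dimensions and improved integrability otherwise, which makes the lower-order terms linear in \(\nabla^3 v\) with bounded coefficients.

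Given the tame estimate, choose \(\delta\) with \(KC\delta^p\le1/2\), and partition \(I\) into at most \(N_0(M,\delta)\) intervals with \(\|v\|_{\cZ^1(J_j)}\le\delta\). Absorbing the top-order term gives \(\|v\|_{L^\infty_tH^3_x(J_j)}\le 2K\|v(t_j)\|_{H^3}+C\), and iterating this over the \(N_0\) intervals bounds \(\sup_I\|v(t)\|_{H^3}\) in terms of \(\|v_0\|_{H^3}\), \(M\) and \(T\). This bound rules out \(H^3\) blow-up before \(T\). Continuity in \(H^3\) then follows from the Duhamel formula and strong continuity of \(S_{-i}\) on \(H^3\), as in Proposition~\ref{prop:aligned-positive-time-regularity}.
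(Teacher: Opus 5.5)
\textbf{Gap in $d=5,6$.} For $d=3,4$ your scheme is the standard persistence argument the paper also invokes, and it works because $f$ is then a polynomial in $v,\overline v$. The gap is in $d=5,6$ ($p=4/3$, resp.\ $p=1$), which is exactly where the paper switches to the time-differentiation argument of Huang--Wang.

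All three of your fallback routes still need, at top order, a third spatial derivative of $f(v)$, or a third-order difference quotient. Its leading part is $D^3f(v)[\nabla v,\nabla v,\nabla v]\sim|v|^{p-2}|\nabla v|^3$, which carries a negative power of $|v|$ that no Strichartz norm of $v$ controls. For $p=1$ it is not even a function: $\partial_1^3(|x_1|x_1)=4\delta_0(x_1)$. So the tame bound you want is false for a real $v$ vanishing transversally on a hypersurface. Concretely:
\begin{itemize}
\item The discrete chain rule with factors $M_{j,h}(v)^{p-1}$ exists only at second order. This is why Proposition~\ref{prop:heat-residual} works with $\delta_{j,h}^2$ and uses only $|D^2f|\lesssim|u|^{p-1}$. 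One more difference quotient hits $D^2f(v)$, and $|D^2f(v(x+he_j))-D^2f(v(x))|/h$ is only $O(h^{p-2}|D^+_{j,h}v|^{p-1})$. This is not bounded uniformly in $h$ when $p<2$.
\item The fractional chain rule for $|v|^pv$ requires $s<1+p$, and $3<1+p$ fails for $p\in\{1,4/3\}$.
\item Obtaining $H^2$ first improves only the coefficients of the lower-order terms. It does not supply the missing smoothness of $f$.
\end{itemize}
The same obstruction breaks your last step, which derives $H^3$ continuity from the Duhamel formula with three derivatives on $f(v)$. Likewise, the $H^3$ local theory you invoke in $d=5,6$ cannot be built by a spatial contraction.

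\textbf{Missing idea: time differentiation.} Trade two spatial derivatives for one time derivative. Put $w=\partial_tv$. Then $w(0)=-i\Delta v_0+i\mu f(v_0)\in H^1$ for $v_0\in H^3$, and $w$ solves the linearized equation $i\partial_tw-\Delta w+\mu Df(v)[w]=0$. Since $\nabla(Df(v)[w])=D^2f(v)[\nabla v,w]+Df(v)[\nabla w]$ involves only $|D^2f(v)|\lesssim|v|^{p-1}$, valid for all $p\ge1$, the H\"older argument of Lemma~\ref{lem:nonlinear-lip} gives $\|Df(v)[w]\|_{L_t^2W_x^{1,(2^*)'}(J)}\lesssim\|v\|_{\cZ^1(J)}^p\|w\|_{\cX^1(J)}$.

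Your subdivision into intervals of small $\cZ^1$ norm, with the Strichartz estimate of Proposition~\ref{prop:sch-linear} at $A=-i$, then bounds $\partial_tv$ in $\cX^1(I)$, in particular in $L_t^\infty H_x^1$. The $H^3$ bound is read off from $\Delta v=i\partial_tv+\mu f(v)$. This needs only $\nabla f(v)=Df(v)\nabla v\in L^2$, i.e.\ one derivative of $f$. That fixed-time step is itself critical and needs, e.g., a high/low frequency splitting using compactness of $\{v(t)\}$ in $H^1$. Continuity in $H^3$ likewise comes from $\partial_tv\in C(I;H^1)$ and this identity, or from an $H^3$ local theory built by the same time-derivative method.
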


\begin{proof}
For \(d=3,4\), this follows from the standard
persistence-of-regularity argument.  For \(d=5,6\), we use the
time-differentiation argument of
Huang and Wang~\cite[Proposition~3.6 and Remark~3.7]{HuangWang2008}.
Their estimates are local and depend only on the relevant Strichartz
norms; the sign \(\mu\) does not affect these estimates.  On every
compact interval \([0,T]\) in the maximal \(H^1\) lifespan,
\eqref{eq:nls-standard} gives a finite critical spacetime norm.
Dividing \([0,T]\) into finitely many subintervals on which this norm
is sufficiently small and iterating the local \(H^3\) estimate yields
\(
 \sup\limits_{t\in[0,T]}\|v(t)\|_{H^3}<\infty.
\)
The local \(H^3\) theory then gives
\(v\in C([0,T];H^3)\).
\end{proof}

The principal point of this subsection is that the \(H^3\) regularity
of \(v\) is not needed for qualitative inviscid convergence.  The
difficulty in inserting \(v\) directly into the approximating CGL
equation is the term
\(
 (A_\eps+i)\Delta v,
\)
which need not belong to \(L_t^1H_x^1\) for an \(H^1\) solution.
We remove this apparent derivative loss by using a smooth
low-frequency truncation of \(v\) as the approximate solution.

We now regard \(v_N=P_{\le N}v\) as an approximate solution of the
CGL equation.  This allows the two derivatives in the diffusion
coefficient error to fall on the frequency cutoff rather than on the
rough limiting solution.

\begin{proposition}
\label{prop:sch-truncated-residual}
Let \(I=[0,T]\) and
\(
 v\in C(I;H^1)\cap\cX^1(I)
\)
solve \eqref{eq:nls}.  For \(N\ge1\), set
\(
 v_N=P_{\le N}v.
\)
For each \(\eps\), define
\begin{equation}\label{eq:truncated-residual-definition}
 e_{\eps,N}
 :=
 \partial_tv_N-A_\eps\Delta v_N
 +\mu C_\eps f(v_N)+r_\eps v_N.
\end{equation}
Then, in the sense of distributions,
\begin{equation}\label{eq:truncated-residual-expansion}
 \begin{aligned}
 e_{\eps,N}
 ={}-(A_\eps+i)\Delta v_N
 +\mu(C_\eps+i)f(v_N)
 +i\mu\bigl(P_{\le N}f(v)-f(v_N)\bigr)
 +r_\eps v_N.
 \end{aligned}
\end{equation}
In particular, \(e_{\eps,N}\in\cN^1(I)\), and
\begin{equation}\label{eq:truncated-residual-estimate}
 \begin{aligned}
 \|e_{\eps,N}\|_{\cN^1(I)}
 \le{}&
 C_{\chi}T N^2|A_\eps+i|
 \|v\|_{L_t^\infty H_x^1(I)}
 \\
 &+C_{d,\chi}|C_\eps+i|
 \|v\|_{\cZ^1(I)}^p\|v\|_{\cX^1(I)}
 \\
 &+C_{\chi}T r_\eps
 \|v\|_{L_t^\infty H_x^1(I)}
 +\rho_N(v;I).
 \end{aligned}
\end{equation}
Consequently, there exists a finite constant
\(B_{v,T,\chi}\), independent of \(N\) and \(\eps\), such that
\begin{equation}\label{eq:truncated-residual-short}
 \|e_{\eps,N}\|_{\cN^1(I)}
 \le
 B_{v,T,\chi}
 \bigl(
 N^2|A_\eps+i|+|C_\eps+i|+r_\eps
 \bigr)
 +\rho_N(v;I).
\end{equation}
\end{proposition}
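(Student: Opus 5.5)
The argument copies the proof of Proposition~\ref{prop:heat-truncated-residual} almost verbatim; only the limiting equation is now \eqref{eq:nls}. First, the identity \eqref{eq:truncated-residual-expansion}. Write \eqref{eq:nls} as \(\partial_tv=-i\Delta v+i\mu f(v)\), which holds in the sense of distributions since \(v\in C(I;H^1)\cap\cX^1(I)\) and \(f(v)\in L_t^2W_x^{1,(2^*)'}(I)\) by \eqref{eq:nonlinear-single}. The multiplier \(P_{\le N}\) commutes with \(\partial_t\) and \(\Delta\), so
\[
 \partial_tv_N=-i\Delta v_N+i\mu P_{\le N}f(v).
\]
Substituting this into \eqref{eq:truncated-residual-definition} gives
\[
 e_{\eps,N}=-(A_\eps+i)\Delta v_N+i\mu P_{\le N}f(v)+\mu C_\eps f(v_N)+r_\eps v_N .
\]
Adding and subtracting \(i\mu f(v_N)\) turns the nonlinear terms into \(\mu(C_\eps+i)f(v_N)+i\mu(P_{\le N}f(v)-f(v_N))\), which is \eqref{eq:truncated-residual-expansion}.

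Next, the estimate \eqref{eq:truncated-residual-estimate}. We decompose \(e_{\eps,N}\) according to the Banach sum defining \(\cN^1(I)\), putting \(-(A_\eps+i)\Delta v_N+r_\eps v_N\) in \(L_t^1H_x^1(I)\) and the two nonlinear terms in \(L_t^2W_x^{1,(2^*)'}(I)\).

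\emph{Diffusion term.} The Fourier support of \(v_N(t)\) lies in \(\{|\xi|\le2N\}\) and \(0\le\chi\le1\), so \(\|\Delta v_N(t)\|_{H^1}\le 4N^2\|v(t)\|_{H^1}\). Integrating over \(I\) gives the bound \(C_\chi TN^2|A_\eps+i|\,\|v\|_{L_t^\infty H_x^1}\).

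\emph{Damping term.} The bound \(C_\chi Tr_\eps\|v\|_{L_t^\infty H_x^1}\) follows from the uniform boundedness of \(P_{\le N}\) on \(H^1\).

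\emph{Coefficient error in the nonlinearity.} Estimate \eqref{eq:nonlinear-single} applied to \(v_N\) bounds this term by \(C_d|C_\eps+i|\,\|v_N\|_{\cZ^1}^p\|v_N\|_{\cX^1}\). By \eqref{eq:PN-uniform-X} (the \(\cZ^1\) components are among the \(\cX^1\) components and are uniformly bounded the same way), this is at most \(C_{d,\chi}|C_\eps+i|\,\|v\|_{\cZ^1}^p\|v\|_{\cX^1}\).

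\emph{Commutator term.} The remaining term has modulus factor \(|i\mu|=1\), so its norm is exactly \(\rho_N(v;I)\), as in \eqref{eq:rhoN-definition}.

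Summing these four bounds gives \eqref{eq:truncated-residual-estimate}. Taking
\[
 B_{v,T,\chi}=C_\chi T\|v\|_{L_t^\infty H_x^1}+C_{d,\chi}\|v\|_{\cZ^1}^p\|v\|_{\cX^1}
\]
gives \eqref{eq:truncated-residual-short}; this constant is independent of \(N\) and \(\eps\). Finiteness of \(\rho_N(v;I)\), and its convergence to zero, come from Lemma~\ref{lem:frequency-truncation}.

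The only point needing care is the distributional justification of \(\partial_tv_N\). To handle it I would note that \(v\) is a mild solution with forcing in \(\cN^1\), so its Duhamel formula may be differentiated in time in \(\mathcal D'\). Since \(v_N\) is spectrally localized, \(\Delta v_N\in L_t^\infty H_x^1\), and all terms are then genuine functions. As in Proposition~\ref{prop:heat-truncated-residual}, the equivalence with the mild formulation for the CGL propagator \(e^{-r_\eps t}S_{A_\eps}\) follows from the uniqueness furnished by Proposition~\ref{prop:sch-linear}.
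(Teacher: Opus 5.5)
Your proposal is correct and follows the paper's proof essentially step for step. It uses the same substitution $\partial_tv_N=-i\Delta v_N+i\mu P_{\le N}f(v)$ and the same Banach-sum splitting with the Bernstein bound on $\Delta v_N$. It handles the coefficient error through \eqref{eq:nonlinear-single} and \eqref{eq:PN-uniform-X}, identifies the commutator term exactly as $\rho_N(v;I)$, and rests the mild-formulation remark on linear uniqueness from Proposition~\ref{prop:sch-linear}, as the paper does.
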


\begin{proof}
Since an \(H^1\) mild solution of \eqref{eq:nls} satisfies the equation
in the sense of distributions, and since \(P_{\le N}\) is a bounded
spatial Fourier multiplier commuting with \(\partial_t\) and
\(\Delta\),
\[
 \partial_tv_N
 =-i\Delta v_N+i\mu P_{\le N}f(v)
\]
in distributions.  Substitution into
\eqref{eq:truncated-residual-definition} gives
\eqref{eq:truncated-residual-expansion}.

We estimate the four terms in
\eqref{eq:truncated-residual-expansion} according to the two summands
in the definition of \(\cN^1(I)\).  Since the Fourier support of
\(v_N\) is contained in \(\{|\xi|\le2N\}\),
\[
 \|\Delta v_N(t)\|_{H^1}
 \le C_\chi N^2\|v(t)\|_{H^1}.
\]
Hence
\begin{equation}\label{eq:truncated-linear-error}
 \|(A_\eps+i)\Delta v_N\|_{L_t^1H_x^1(I)}
 \le
 C_\chi T N^2|A_\eps+i|
 \|v\|_{L_t^\infty H_x^1(I)}.
\end{equation}
Similarly,
\begin{equation}\label{eq:truncated-damping-error}
 \|r_\eps v_N\|_{L_t^1H_x^1(I)}
 \le
 C_\chi T r_\eps
 \|v\|_{L_t^\infty H_x^1(I)}.
\end{equation}

For the nonlinear coefficient error, the uniform multiplier bounds,
\eqref{eq:nonlinear-single}, and
\eqref{eq:PN-uniform-X} yield
\begin{align}
 \|(C_\eps+i)f(v_N)\|_
 {L_t^2W_x^{1,(2^*)'}(I)}
 &\le
 C_d|C_\eps+i|
 \|v_N\|_{\cZ^1(I)}^p
 \|v_N\|_{\cX^1(I)}\\
 &\le
 C_{d,\chi}|C_\eps+i|
 \|v\|_{\cZ^1(I)}^p
 \|v\|_{\cX^1(I)}.
 \label{eq:truncated-nonlinear-coefficient}
\end{align}
Finally,
\[
 \|P_{\le N}f(v)-f(v_N)\|_
 {L_t^2W_x^{1,(2^*)'}(I)}
 =\rho_N(v;I).
\]
Combining the last four estimates and using the Banach-sum definition
of \(\cN^1(I)\) proves
\eqref{eq:truncated-residual-estimate}.  The shorter form
\eqref{eq:truncated-residual-short} follows by enlarging the constant.

We also record explicitly the forward mild formulation needed below.
Set
\(
 S_{A_\eps,r_\eps}(t)
 :=e^{-r_\eps t}e^{tA_\eps\Delta},
 ~t\ge0.
\)
Since \(I=[0,T]\), the distributional identity
\eqref{eq:truncated-residual-definition} implies
\begin{equation}\label{eq:truncated-mild-identity}
 \begin{aligned}
 v_N(t)
 ={}S_{A_\eps,r_\eps}(t)v_N(0)
 +\int_0^t
 S_{A_\eps,r_\eps}(t-s)
 \bigl[-\mu C_\eps f(v_N(s))+e_{\eps,N}(s)\bigr]\,ds,
 \qquad 0\le t\le T.
 \end{aligned}
\end{equation}
Indeed, the forcing on the right belongs to \(\cN^1(I)\), by
\eqref{eq:PN-uniform-X}, \eqref{eq:nonlinear-single}, and
\eqref{eq:truncated-residual-estimate}.  The right-hand side of
\eqref{eq:truncated-mild-identity} therefore defines the natural mild
solution of the corresponding constant-coefficient linear Cauchy
problem.  Its difference from \(v_N\) has zero initial data and solves
\(\partial_tz-A_\eps\Delta z+r_\eps z=0\) in distributions; uniqueness
for the linear Cauchy problem furnished by
Proposition~\ref{prop:sch-linear} gives
\eqref{eq:truncated-mild-identity}.  Thus \(v_N\) is an admissible
approximate solution in the precise forward mild sense used in the
proof of Proposition~\ref{prop:sch-stability}.
\end{proof}

For completeness, we record the quantitative refinement available
when the initial data have two additional spatial derivatives.
Lemma~\ref{lem:nls-H3-persistence} supplies the required \(H^3\)
regularity of the limiting NLS solution.  This is the only place where
\(H^3\) regularity is needed.

\begin{proposition}
\label{prop:sch-residual}
Let \(I=[0,T]\) and assume
\(
 v\in C(I;H^3)\cap\cX^1(I).
\)
Define
\[
 e_\eps
 =-(A_\eps+i)\Delta v
 +\mu(C_\eps+i)f(v)
 +r_\eps v.
\]
Then
\begin{align}
 \|e_\eps\|_{\cN^1(I)}
 \le{}
 T|A_\eps+i|\|v\|_{L_t^\infty H_x^3(I)}
 +C_d|C_\eps+i|
 \|v\|_{\cZ^1(I)}^p\|v\|_{\cX^1(I)}
 +Tr_\eps\|v\|_{L_t^\infty H_x^1(I)}.
 \label{eq:sch-residual}
\end{align}
\end{proposition}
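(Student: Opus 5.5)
The estimate will follow from splitting \(e_\eps\) according to the two summands in the Banach-sum definition of \(\cN^1(I)\), in the same way as in Proposition~\ref{prop:heat-residual}. Put the two linear terms into the first summand and the nonlinear term into the second:
\[
 e_{\eps,1}:=-(A_\eps+i)\Delta v+r_\eps v\in L_t^1H_x^1(I),
 \qquad
 e_{\eps,2}:=\mu(C_\eps+i)f(v)\in L_t^2W_x^{1,(2^*)'}(I).
\]
Then
\(
 \|e_\eps\|_{\cN^1(I)}\le\|e_{\eps,1}\|_{L_t^1H_x^1(I)}+\|e_{\eps,2}\|_{L_t^2W_x^{1,(2^*)'}(I)},
\)
and it suffices to bound each piece separately.

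For the diffusion error, the Fourier multiplier \(|\xi|^2(1+|\xi|^2)^{1/2}\le(1+|\xi|^2)^{3/2}\) gives the pointwise-in-time bound \(\|\Delta v(t)\|_{H^1}\le\|v(t)\|_{H^3}\). Integrating over \(I=[0,T]\) yields
\[
 \|(A_\eps+i)\Delta v\|_{L_t^1H_x^1(I)}\le T|A_\eps+i|\,\|v\|_{L_t^\infty H_x^3(I)}.
\]
This step uses the assumption \(v\in C(I;H^3)\), which guarantees that the supremum in time is finite. The damping term is handled the same way:
\[
 \|r_\eps v\|_{L_t^1H_x^1(I)}\le Tr_\eps\|v\|_{L_t^\infty H_x^1(I)}.
\]
The triangle inequality combines these two bounds into the estimate for \(e_{\eps,1}\).

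For the nonlinear coefficient error, apply \eqref{eq:nonlinear-single} from Lemma~\ref{lem:nonlinear-lip} on \(J=I\):
\[
 \|(C_\eps+i)f(v)\|_{L_t^2W_x^{1,(2^*)'}(I)}\le C_d|C_\eps+i|\,\|v\|_{\cZ^1(I)}^p\|v\|_{\cX^1(I)}.
\]
The factor \(|\mu|=1\) is harmless here. Adding the bounds for \(e_{\eps,1}\) and \(e_{\eps,2}\) gives \eqref{eq:sch-residual}.

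None of these steps is a genuine obstacle, since each is a direct norm bound. The one point that needs care is keeping \(\Delta v\) entirely in the \(L_t^1H_x^1\) summand, which is why \(H^3\) rather than \(H^2\) regularity is needed. That regularity is supplied by Lemma~\ref{lem:nls-H3-persistence} when the estimate is applied to the quantitative inviscid limit.
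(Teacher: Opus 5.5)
Your proposal is correct and follows essentially the same route as the paper's proof. Like the paper, you put \(-(A_\eps+i)\Delta v+r_\eps v\) in \(L_t^1H_x^1\) via \(\|\Delta v(t)\|_{H^1}\le\|v(t)\|_{H^3}\), bound \(\mu(C_\eps+i)f(v)\) in \(L_t^2W_x^{1,(2^*)'}\) by \eqref{eq:nonlinear-single}, and combine the two using the Banach-sum definition of \(\cN^1(I)\).
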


\begin{proof}
Split
\[
 e_{\eps,1}=-(A_\eps+i)\Delta v+r_\eps v,
 \qquad
 e_{\eps,2}=\mu(C_\eps+i)f(v).
\]
Since
\(
 \|\Delta v(t)\|_{H^1}\le\|v(t)\|_{H^3},
\)
we have
\[
 \|e_{\eps,1}\|_{L_t^1H_x^1(I)}
 \le
 T|A_\eps+i|\|v\|_{L_t^\infty H_x^3(I)}
 +Tr_\eps\|v\|_{L_t^\infty H_x^1(I)}.
\]
Equation \eqref{eq:nonlinear-single} gives
\[
 \|e_{\eps,2}\|_{L_t^2W_x^{1,(2^*)'}(I)}
 \le
 C_d|C_\eps+i|
 \|v\|_{\cZ^1(I)}^p
 \|v\|_{\cX^1(I)}.
\]
The Banach-sum definition of \(\cN^1(I)\) yields
\eqref{eq:sch-residual}.
\end{proof}

\begin{remark}
\label{rem:sch-two-parameter}
Let \(K_*\) be the stability constant from
Proposition~\ref{prop:sch-stability}, with the coefficient bounds fixed
as in the proof below.  Whenever the corresponding stability
smallness condition is satisfied, Proposition~\ref{prop:sch-stability}
and the triangle inequality give
\begin{align}
 \|v^\eps-v\|_{\cX^1(I)}
 \le{}&
 \|P_{\le N}v-v\|_{\cX^1(I)}
 \notag\\
 &+K_*\Bigl[
 \|v_0^\eps-P_{\le N}v_0\|_{H^1}
 +B_{v,T,\chi}
 \bigl(
 N^2|A_\eps+i|+|C_\eps+i|+r_\eps
 \bigr)
 +\rho_N(v;I)
 \Bigr].
 \label{eq:two-parameter-inviscid-bound}
\end{align}
Thus the qualitative \(H^1\) limit is obtained by first making the
frequency-truncation errors small and then making the coefficient and
initial-data errors small.  No quantitative decay rate for the high
frequency tail of \(v\) is needed.
\end{remark}

\subsection{Local inviscid limit}

We first prove the inviscid limit at the natural energy regularity.

\begin{proof}[Proof of Theorem~\ref{thm:intro-B}(b): qualitative convergence]
Fix
\(
 I=[0,T],~ 0<T<T_{\max}(v_0).
\)
By the standard local theory,
\(
 v\in C(I;H^1)\cap\cX^1(I).
\)
Let \(C_\chi\) be chosen so that
\[
 \sup_{N\ge1}\|P_{\le N}v\|_{\cX^1(I)}
 \le C_\chi\|v\|_{\cX^1(I)},
\]
and set
\(
 M=1+C_\chi\|v\|_{\cX^1(I)}.
\)
Write
\(
 A_\eps=a_\eps-ib_\eps.
\)
By the coefficient assumptions
\(
 |A_\eps|=1,~ \operatorname{Re}A_\eps\ge0,~
 A_\eps\to-i,~ C_\eps\to-i,~ r_\eps\to0,
\)
there exists \(\eps_0>0\) such
that, for \(0<\eps<\eps_0\),
\(
 a_\eps\ge0,~
 b_\eps\ge 1 / 2,~
 a_\eps^2+b_\eps^2=1,~
 |C_\eps|\le2,~
 0\le r_\eps\le1.
\)
Apply Proposition~\ref{prop:sch-stability} with
\(
 b_0=1 / 2,~ C_0=2,~ r_0=1,
\)
and with the fixed reference bound \(M\).  Denote the corresponding
constants by
\[
 \varepsilon_*
 =\varepsilon_*(d,1/2,2,1,T,M)>0,
 \qquad
 K_*
 =K_*(d,1/2,2,1,T,M)>0.
\]
Crucially, these constants are independent of \(N\).

Let \(\eta>0\).  By
Lemma~\ref{lem:frequency-truncation}, we may first choose
\(N=N(\eta)\ge1\) so large that
\begin{equation}\label{eq:choose-N-1}
 \|P_{\le N}v-v\|_{\cX^1(I)}
 <\frac{\eta}{2},
\end{equation}
and
\begin{equation}\label{eq:choose-N-2}
 \|(1-P_{\le N})v_0\|_{H^1}
 +\rho_N(v;I)
 <
 \min\left\{
 \frac{\varepsilon_*}{4},
 \frac{\eta}{4K_*}
 \right\}.
\end{equation}
This \(N\) is now fixed.

By the assumptions
\(
 \|v_0^\eps-v_0\|_{H^1}\to0,~
 A_\eps\to-i,~
 C_\eps\to-i,~
 r_\eps\to0,
\)
and by Proposition~\ref{prop:sch-truncated-residual}, we may then
choose \(0<\eps_\eta<\eps_0\) so that, for \(0<\eps<\eps_\eta\),
\begin{equation}\label{eq:choose-eps}
 \|v_0^\eps-v_0\|_{H^1}
 +B_{v,T,\chi}
 \bigl(
 N^2|A_\eps+i|+|C_\eps+i|+r_\eps
 \bigr)
 <
 \min\left\{
 \frac{\varepsilon_*}{4},
 \frac{\eta}{4K_*}
 \right\}.
\end{equation}

Set \(U=v_N=P_{\le N}v\).  Since \(U(0)=P_{\le N}v_0\),
\eqref{eq:truncated-residual-short},
\eqref{eq:choose-N-2}, and \eqref{eq:choose-eps} imply
\begin{align*}
 \|v_0^\eps-U(0)\|_{H^1}
 +\|e_{\eps,N}\|_{\cN^1(I)}
 &\le
 \|v_0^\eps-v_0\|_{H^1}
 +\|(1-P_{\le N})v_0\|_{H^1}
 \\
 &\quad+B_{v,T,\chi}
 \bigl(
 N^2|A_\eps+i|+|C_\eps+i|+r_\eps
 \bigr)
 +\rho_N(v;I)
 \\
 &<\varepsilon_*.
\end{align*}
Therefore Proposition~\ref{prop:sch-stability} applies to the
approximate solution \(U=P_{\le N}v\).  It produces a unique exact
solution
\(
 v^\eps\in C(I;H^1)\cap\cX^1(I)
\)
of the normalized CGL equation and yields
\begin{align*}
 \|v^\eps-P_{\le N}v\|_{\cX^1(I)}
 \le
 K_*
 \bigl(
 \|v_0^\eps-P_{\le N}v_0\|_{H^1}
 +\|e_{\eps,N}\|_{\cN^1(I)}
 \bigr)
<\frac{\eta}{2}.
\end{align*}
Combining this estimate with \eqref{eq:choose-N-1}, we obtain
\[
 \|v^\eps-v\|_{\cX^1(I)}
 \le
 \|v^\eps-P_{\le N}v\|_{\cX^1(I)}
 +\|P_{\le N}v-v\|_{\cX^1(I)}
 <\eta.
\]
Taking, for instance, \(\eta=1\) in the preceding construction first
shows that the exact CGL solution exists on \(I\) for every sufficiently
small \(\eps\).  For an arbitrary \(\eta>0\), repeating the construction
may use a different frequency cutoff \(N(\eta)\), but
Proposition~\ref{prop:sch-stability} gives uniqueness in
\(C(I;H^1)\cap\cX^1(I)\); hence the solution obtained using the new
approximation is the same solution.  The preceding
estimate therefore shows that, for every \(\eta>0\),
\(
 \|v^\eps-v\|_{\cX^1(I)}<\eta
\)
for all sufficiently small \(\eps\).  Thus
\(
 v^\eps\longrightarrow v
 ~\text{in }\cX^1(I).
\)
\end{proof}

\begin{proof}[Proof of the quantitative refinement in Theorem~\ref{thm:intro-B}(b)]
Fix \(I=[0,T]\) and assume in addition that \(v_0\in H^3\).
Lemma~\ref{lem:nls-H3-persistence} and \eqref{eq:nls-standard} give
\(
 v\in C(I;H^3)\cap\cX^1(I).
\)
Set
\(
 M=1+\|v\|_{\cX^1(I)}.
\)
For all sufficiently small \(\eps\), the coefficient bounds
\(
 b_\eps\ge1 / 2,~
 |C_\eps|\le2,~
 0\le r_\eps\le1
\)
hold as above.  Let
\[
 \varepsilon_*
 =\varepsilon_*(d,1/2,2,1,T,M),
 \qquad
 K_*
 =K_*(d,1/2,2,1,T,M)
\]
be the constants from
Proposition~\ref{prop:sch-stability}.  Define
\[
 B_{v,T}
 =
 T\|v\|_{L_t^\infty H_x^3(I)}
 +C_d\|v\|_{\cZ^1(I)}^p\|v\|_{\cX^1(I)}
 +T\|v\|_{L_t^\infty H_x^1(I)}.
\]
Proposition~\ref{prop:sch-residual} gives
\[
 \|e_\eps\|_{\cN^1(I)}
 \le
 B_{v,T}
 \bigl(
 |A_\eps+i|+|C_\eps+i|+r_\eps
 \bigr).
\]
Since
\[
 \|v_0^\eps-v_0\|_{H^1}
 +B_{v,T}
 \bigl(
 |A_\eps+i|+|C_\eps+i|+r_\eps
 \bigr)
 \longrightarrow0,
\]
the stability smallness condition is satisfied for all sufficiently
small \(\eps\), with the reference solution \(U=v\).  Hence
\[
 \|v^\eps-v\|_{\cX^1(I)}
 \le
 K_*
 \left[
 \|v_0^\eps-v_0\|_{H^1}
 +B_{v,T}
 \bigl(
 |A_\eps+i|+|C_\eps+i|+r_\eps
 \bigr)
 \right].
\]
After enlarging the constant,
\[
 K_{v,T}:=K_*\max\{1,B_{v,T}\},
\]
we obtain \eqref{eq:intro-schrodinger-limit}.
\end{proof}

\appendix
\section{Stationary states and aligned coefficients}
\label{app:stationary-alignment}

We record here the reason for restricting the focusing regularity problem to
aligned coefficients.  Consider the focusing equation
\begin{equation}\label{eq:general-focusing-cgl}
        \partial _t u-(a+i\alpha)\Delta u-(b+i\beta)f(u)=0,
        \qquad a,b>0,\qquad \alpha,\beta\in\R.
\end{equation}

\begin{lemma}
\label{lem:stationary-states-alignment}
Let \(Q\in\dot H^1(\R^d)\) be a stationary weak solution of
\eqref{eq:general-focusing-cgl}.  If \(Q\not\equiv0\), then
\(
        a\beta=b\alpha.
\)
Consequently, if \(a\beta\ne b\alpha\), the only stationary solution in
\(\dot H^1(\R^d)\) is the zero solution.
\end{lemma}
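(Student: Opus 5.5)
The approach is to test the stationary equation against \(\overline Q\). A stationary weak solution satisfies
\[
 (a+i\alpha)\Delta Q+(b+i\beta)f(Q)=0
\]
in \(\mathcal D'(\R^d)\). Since \(Q\in\dot H^1\), Sobolev gives \(Q\in L^{2^*}\), hence \(f(Q)=|Q|^pQ\in L^{(2^*)'}\subset\dot H^{-1}\). The equation therefore holds in \(\dot H^{-1}\), and it may be paired with \(\overline Q\in\dot H^1\). Density of \(C_c^\infty\) in \(\dot H^1\) justifies this pairing. An alternative is the cutoff argument with \(\eta_R\) from the proof of Proposition~\ref{prop:aligned-H1-regularity}, where the error term is controlled by \(\|\nabla Q\|_{L^2(R\le|x|\le 2R)}\|Q\|_{L^{2^*}(R\le|x|\le2R)}\to0\).

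The pairing yields the complex identity
\[
 -(a+i\alpha)\|\nabla Q\|_2^2+(b+i\beta)\|Q\|_{2^*}^{2^*}=0.
\]
Write \(X=\|\nabla Q\|_2^2\ge0\) and \(Y=\|Q\|_{2^*}^{2^*}\ge0\). Separating real and imaginary parts gives
\[
 aX=bY,\qquad \alpha X=\beta Y.
\]
If \(Q\not\equiv0\), then \(X>0\), because an element of \(\dot H^1\) with zero gradient vanishes (it lies in \(L^{2^*}\) and is constant). Since \(a>0\), we get \(Y=aX/b>0\). Substituting into the second relation gives \(\alpha X=\beta aX/b\), and dividing by \(X\) yields \(b\alpha=a\beta\). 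The second sentence of the lemma is the contrapositive.

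The main point needing care is the justification of the pairing. For \(Q\) merely in \(\dot H^1\), this requires \(\langle\Delta Q,\overline Q\rangle=-\|\nabla Q\|_2^2\) and \(\langle f(Q),\overline Q\rangle=\|Q\|_{2^*}^{2^*}\). The second follows by approximating \(Q\) in \(\dot H^1\) by \(\varphi_n\in C_c^\infty\) and using \(f(Q)\in L^{(2^*)'}\) together with \(\varphi_n\to Q\) in \(L^{2^*}\). I expect this step to be routine; the rest is algebra.
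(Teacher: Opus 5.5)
Your proposal is correct and follows the paper's proof. Both test the stationary equation against \(\overline Q\), which is justified by \(Q\in L^{2^*}\) and \(f(Q)\in L^{(2^*)'}\). Both then split the resulting identity into real and imaginary parts and conclude from the \(2\times2\) linear system. The only differences are minor. The paper argues the contrapositive through the nonzero determinant \(a\beta-b\alpha\), whereas you use \(\|\nabla Q\|_2>0\) directly. Your density argument also spells out a step the paper merely asserts.
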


\begin{proof}
By the Sobolev inequality, \(Q\in L^{2^*}(\R^d)\), while
\(f(Q)\in L^{(2^*)'}(\R^d)\).  Hence the stationary equation may be tested
against \(\overline Q\).  The stationary equation gives
\[
        (a+i\alpha)\|\nabla Q\|_2^2
        -(b+i\beta)\|Q\|_{2^*}^{2^*}=0.
\]
Taking real and imaginary parts, we obtain
\[
        a\|\nabla Q\|_2^2-b\|Q\|_{2^*}^{2^*}=0,
        \qquad
        \alpha\|\nabla Q\|_2^2-\beta\|Q\|_{2^*}^{2^*}=0.
\]
If \(a\beta\ne b\alpha\), this system implies
\(\|\nabla Q\|_2=\|Q\|_{2^*}=0\), and therefore \(Q=0\).  This proves the
lemma.
\end{proof}

\medskip
\noindent\textbf{Acknowledgments.}
J. Xin is partially supported by the National Natural Science Foundation of China (No.~12271293) and the Natural Science Foundation of Shandong Province (No.~ZR2023MA002). Y. Zheng is partially supported by the Natural Science Foundation of Shandong Province (No.~ZR2026MS0070) and the National Natural Science Foundation of China(No.~11901350 and No.~12371172). 

\bibliographystyle{plain}

\begin{thebibliography}{99}

\bibitem{AransonKramer2002}
I.~S. Aranson and L.~Kramer, \emph{The world of the complex {Ginzburg--Landau}
  equation}, Rev. Modern Phys. \textbf{74} (2002), no.~1, 99--143.

\bibitem{Aubin1976}
T.~Aubin, \emph{Probl\`emes isop\'erim\'etriques et espaces de {S}obolev}, J.
  Diff. Geo. \textbf{11} (1976), no.~4, 573--598.

\bibitem{BechoucheJungel2000}
P.~Bechouche and A.~J\"ungel, \emph{Inviscid limits of the complex
  {Ginzburg--Landau} equation}, Comm. Math. Phys. \textbf{214} (2000), no.~1,
  201--226.

\bibitem{Cazenave2012ENAMA}
T.~Cazenave, \emph{Finite-time blowup for a family of nonlinear parabolic
  equations}, Lecture notes prepared for VI ENAMA, Aracaju, Brazil, November
  7--9, 2012. Available at
  \url{https://www.enama.org/wp-content/uploads/2013/04/E6_CAB-4.pdf}, 2012.

\bibitem{CazenaveWeissler1990}
T.~Cazenave and F.~B. Weissler, \emph{The {C}auchy problem for the critical
  nonlinear {S}chr\"odinger equation in {$H^s$}}, Nonlinear Anal. \textbf{14}
  (1990), no.~10, 807--836.

\bibitem{ChengGaoZheng2016}
X.~Cheng, Y.~Gao, and J.~Zheng, \emph{Remark on energy-critical {NLS} in
  5{D}}, Math. Methods Appl. Sci. \textbf{39} (2016), no.~8, 2100--2117.

\bibitem{ChengGuoZhengZheng2024Exterior}
X.~Cheng, C.-Y. Guo, J.~Zheng, and Y.~Zheng, \emph{Global well-posedness of the
  energy-critical complex {Ginzburg--Landau} equation in exterior domains},
  Calc. Var. Partial Differential Equations \textbf{64} (2025), no.~9, Paper
  No. 298, 55.

\bibitem{ChengGuoZheng2026JMPA}
X.~Cheng, C.-Y. Guo, and Y.~Zheng, \emph{Global weak solution of 3-{D} focusing
  energy-critical nonlinear {S}chr\"odinger equation}, J. Math. Pures Appl. (9)
  \textbf{214} (2026), Paper No. 103947.

\bibitem{ChengGuoZheng2025Limit}
X.~Cheng, C.~Guo, and Y.~Zheng, \emph{The limit theory of energy-critical
  complex {Ginzburg--Landau} equation}, Acta Math. Sin. (Engl. Ser.)
  \textbf{41} (2025), no.~12, 3003--3019.

\bibitem{ChristKiselev2001}
M.~Christ and A.~Kiselev, \emph{Maximal functions associated to filtrations},
  J. Funct. Anal. \textbf{179} (2001), no.~2, 409--425.

\bibitem{CipolattiDicksteinPuel2014}
R.~Cipolatti, F.~Dickstein, and J.-P. Puel, \emph{Existence of standing waves
  for the complex {Ginzburg--Landau} equation}, J. Math. Anal. Appl.
  \textbf{422} (2015), no.~1, 579--593.

\bibitem{ClementOkazawaSobajimaYokota2012}
P.~Cl{\'e}ment, N.~Okazawa, M.~Sobajima, and T.~Yokota, \emph{A simple approach
  to the {C}auchy problem for complex {Ginzburg--Landau} equations by
  compactness methods}, J. Diff. Equ. \textbf{253} (2012), no.~4,
  1250--1263.

\bibitem{CorreiaFigueira2018}
S.~Correia and M.~Figueira, \emph{Some stability results for the complex
  {Ginzburg--Landau} equation}, Commun. Contemp. Math. \textbf{22} (2020),
  no.~8, Paper No. 1950038.

\bibitem{Dodson2019}
B.~Dodson, \emph{Global well-posedness and scattering for the focusing, cubic
  {S}chr\"odinger equation in dimension {$d=4$}}, Ann. Sci. \'Ec. Norm. Sup\'er.
  (4) \textbf{52} (2019), no.~1, 139--180.

\bibitem{DuongNouailiZaag2019}
G.~K. Duong, N.~Nouaili, and H.~Zaag, \emph{Construction of blowup solutions
  for the complex {Ginzburg--Landau} equation with critical parameters}, Mem.
  Amer. Math. Soc. \textbf{285} (2023), no.~1411, v+91.

\bibitem{DuongNouailiZaag2023}
G.~K. Duong, N.~Nouaili, and H.~Zaag, \emph{Flat blow-up solutions for the
  complex {Ginzburg--Landau} equation}, Arch. Ration. Mech. Anal. \textbf{248}
  (2024), no.~6, Paper No. 117, 57.

\bibitem{Giga1986}
Y.~Giga, \emph{Solutions for semilinear parabolic equations in {$L^p$} and
  regularity of weak solutions of the {N}avier--{S}tokes system}, J.
  Diff. Equ. \textbf{62} (1986), no.~2, 186--212.

\bibitem{GinzburgLandau1950}
V.~L. Ginzburg and L.~D. Landau, \emph{On the theory of superconductivity},
  Zh. Eksperim. Teor. Fiz. \textbf{20} (1950), no.~12, 1064--1082.

\bibitem{GinibreVelo1992}
J.~Ginibre and G.~Velo, \emph{Smoothing properties and retarded estimates for
  some dispersive evolution equations}, Comm. Math. Phys. \textbf{144} (1992),
  no.~1, 163--188.

\bibitem{GinibreVelo1996}
J.~Ginibre and G.~Velo, \emph{The {C}auchy problem in local spaces for the
  complex {Ginzburg--Landau} equation. {I}. {Compactness} methods}, Phys. D
  \textbf{95} (1996), no.~3--4, 191--228.

\bibitem{GinibreVelo1997}
J.~Ginibre and G.~Velo, \emph{The {C}auchy problem in local spaces for the
  complex {Ginzburg--Landau} equation. {II}. {Contraction} methods}, Comm.
  Math. Phys. \textbf{187} (1997), no.~1, 45--79.

\bibitem{GuoJiangLi2020}
B.~Guo, M.~Jiang, and Y.~Li, \emph{Ginzburg--Landau equations}, Science Press,
  Beijing, 2020.

\bibitem{Henry1981}
D.~Henry, \emph{Geometric theory of semilinear parabolic equations}, Lecture
  Notes in Mathematics \textbf{840}, Springer-Verlag, Berlin--New York, 1981.

\bibitem{HuangWang2008}
C.~Huang and B.~Wang, \emph{Inviscid limit for the energy-critical complex
  {Ginzburg--Landau} equation}, J. Funct. Anal. \textbf{255} (2008), no.~3,
  681--725.

\bibitem{KeelTao1998}
M.~Keel and T.~Tao, \emph{Endpoint {S}trichartz estimates}, Amer. J. Math.
  \textbf{120} (1998), no.~5, 955--980.

\bibitem{KenigMerle2006}
C.~E. Kenig and F.~Merle, \emph{Global well-posedness, scattering and blow-up
  for the energy-critical, focusing, non-linear {S}chr\"odinger equation in the
  radial case}, Invent. Math. \textbf{166} (2006), no.~3, 645--675.

\bibitem{KenigMerle2008Wave}
C.~E. Kenig and F.~Merle, \emph{Global well-posedness, scattering and blow-up
  for the energy-critical focusing non-linear wave equation}, Acta Math.
  \textbf{201} (2008), no.~2, 147--212.

\bibitem{KillipMiaoVisanZhangZheng2017}
R.~Killip, C.~Miao, M.~Vi\c{s}an, J.~Zhang, and J.~Zheng, \emph{The
  energy-critical {NLS} with inverse-square potential}, Discrete Contin. Dyn.
  Syst. \textbf{37} (2017), no.~7, 3831--3866.

\bibitem{KurodaOtaniShimizu2017}
T.~Kuroda, M.~\^Otani, and S.~Shimizu, \emph{Initial-boundary value problems
  for complex {Ginzburg--Landau} equations in general domains}, Adv. Math. Sci.
  Appl. \textbf{26} (2017), no.~1, 119--141.

\bibitem{Lunardi1995}
A.~Lunardi, \emph{Analytic semigroups and optimal regularity in parabolic
  problems}, Progress in Nonlinear Differential Equations and Their
  Applications \textbf{16}, Birkh{\"a}user Verlag, Basel, 1995.

\bibitem{MachiharaNakamura2003}
S.~Machihara and Y.~Nakamura, \emph{The inviscid limit for the complex
  {Ginzburg--Landau} equation}, J. Math. Anal. Appl. \textbf{281} (2003),
  no.~2, 552--564.

\bibitem{MiaoMurphyZheng2014}
C.~Miao, J.~Murphy, and J.~Zheng, \emph{The defocusing energy-supercritical
  {NLS} in four space dimensions}, J. Funct. Anal. \textbf{267} (2014), no.~6,
  1662--1724.

\bibitem{MiaoYuanZhang2008}
C.~Miao, B.~Yuan, and B.~Zhang, \emph{Well-posedness of the {C}auchy problem
  for the fractional power dissipative equations}, Nonlinear Anal. \textbf{68}
  (2008), no.~3, 461--484.

\bibitem{MiaoZhangZheng2025}
C.~Miao, B.~Zhang, and J.~Zheng, \emph{Harmonic analysis methods in partial
  differential equations}, De Gruyter Studies in Mathematics \textbf{102}, De
  Gruyter, Berlin--Boston, 2025.

\bibitem{NouailiZaag2017}
N.~Nouaili and H.~Zaag, \emph{Construction of a blow-up solution for the
  complex {Ginzburg--Landau} equation in some critical case}, Arch. Ration.
  Mech. Anal. \textbf{228} (2018), no.~3, 995--1058.

\bibitem{OkazawaYokota2002JDE}
N.~Okazawa and T.~Yokota, \emph{Global existence and smoothing effect for the
  complex {Ginzburg--Landau} equation with {$p$}-{L}aplacian}, J. Diff. Equ.
  \textbf{182} (2002), no.~2, 541--576.

\bibitem{OkazawaYokota2002Monotonicity}
N.~Okazawa and T.~Yokota, \emph{Monotonicity method applied to the complex
  {Ginzburg--Landau} and related equations}, J. Math. Anal. Appl. \textbf{267}
  (2002), no.~1, 247--263.

\bibitem{Pazy1983}
A.~Pazy, \emph{Semigroups of linear operators and applications to partial
  differential equations}, Applied Mathematical Sciences \textbf{44},
  Springer-Verlag, New York, 1983.

\bibitem{Stein1970}
E.~M. Stein, \emph{Singular integrals and differentiability properties of
  functions}, Princeton Mathematical Series \textbf{30}, Princeton University
  Press, Princeton, NJ, 1970.

\bibitem{Strichartz1977}
R.~S. Strichartz, \emph{Restrictions of {F}ourier transforms to quadratic
  surfaces and decay of solutions of wave equations}, Duke Math. J. \textbf{44}
  (1977), no.~3, 705--714.

\bibitem{Struwe2006}
M.~Struwe, \emph{On uniqueness and stability for supercritical nonlinear wave
  and {S}chr\"odinger equations}, Int. Math. Res. Not. (2006), Art. ID 76737,
  14 pp..

\bibitem{Talenti1976}
G.~Talenti, \emph{Best constant in {S}obolev inequality}, Ann. Mat. Pura Appl.
  (4) \textbf{110} (1976), no.~1, 353--372.

\bibitem{Tao2006}
T.~Tao, \emph{Nonlinear dispersive equations: Local and global analysis}, CBMS
  Regional Conference Series in Mathematics \textbf{106}, American Mathematical
  Society, Providence, RI, 2006.

\bibitem{Wang2002}
B.~Wang, \emph{The limit behavior of solutions for the {C}auchy problem of
  the complex {Ginzburg--Landau} equation}, Comm. Pure Appl. Math. \textbf{55}
  (2002), no.~4, 481--508.

\bibitem{Weissler1980}
F.~B. Weissler, \emph{Local existence and nonexistence for semilinear parabolic
  equations in {$L^p$}}, Indiana Univ. Math. J. \textbf{29} (1980), no.~1,
  79--102.

\bibitem{Wu1998}
J.~Wu, \emph{The inviscid limit of the complex {Ginzburg--Landau} equation},
  J. Diff. Equ. \textbf{142} (1998), no.~2, 413--433.

\end{thebibliography}

\end{document}